\documentclass[11pt,reqno]{amsart}
\usepackage[margin=1in]{geometry}
\usepackage{amsfonts}
\usepackage{amsmath}
\usepackage{amsthm} %
\usepackage{amssymb}
\usepackage{epsfig, graphics}
\usepackage{hyperref}
\hypersetup{
    colorlinks = true,
    linkcolor = {blue},
    citecolor = {red}
}
\usepackage{url}
\usepackage{color}
\usepackage{setspace}
\usepackage{float}
\usepackage{subfig}
\usepackage{colortbl}
\usepackage{bm}
\usepackage{bbm}
\usepackage{mathtools}
\mathtoolsset{showonlyrefs}
\usepackage{dsfont}
\usepackage{mathrsfs}
\usepackage{fancyvrb}
\usepackage{comment}

\usepackage{enumitem}

\usepackage[square, numbers, comma, sort&compress]{natbib}

\usepackage[margin=1in]{geometry}

\newtheorem{theorem}{Theorem}[section]
\newtheorem{remark}[theorem]{Remark}

\newtheorem{definition}[theorem]{Definition}

\newtheorem{proposition}[theorem]{Proposition}
\newtheorem{lemma}[theorem]{Lemma}

\newtheorem{assumption}{Assumption}

\newcounter{subassumption}[assumption]

\makeatletter
\renewcommand{\p@subassumption}{\theasu}%
\makeatother

\usepackage[dvipsnames]{xcolor}

\newcommand\cP{\mathcal P}

\def\cY{\mathcal Y}

\newcommand\EE {\mathbb E}
\newcommand\FF {\mathbb F}

\newcommand\RR {\mathbb R}

\newcommand\PP {\mathbb P}

\newcommand\cA {\mathcal A}

\newcommand\cF {\mathcal F}

\newcommand{\cR}{\mathcal{R}}
\newcommand{\cQ}{\mathcal{Q}}

\newcommand{\kL}{\mathfrak{L}}

\newcommand{\bS}{\mathbb{S}}

\newcommand{\wt}{\widetilde}
\newcommand{\wh}{\widehat}
\newcommand{\lan}{\langle}
\newcommand{\ran}{\rangle}
\newcommand{\cd}{\cdot}

\newcommand{\cM}{\mathcal{M}}

\newcommand{\sU}{\mathscr{U}}

\makeatletter
\newtheoremstyle{nodot}  %
  {\topsep}  %
  {\topsep}  %
  {\itshape} %
  {}         %
  {\bfseries} %
  {}         %
  { }        %
  {\thmname{#1} \thmnumber{#2} \thmnote{(#3)}} %
\makeatother

\theoremstyle{nodot}

\title[Long-time behavior and turnpike properties of LQ GMFC problems]{Long-time behavior and turnpike properties of linear-quadratic graphon mean field control problems}
\author[E. Bayraktar]{Erhan Bayraktar}
\thanks{Erhan Bayraktar is supported in part by National Science Foundation Grant \href{https://www.nsf.gov/awardsearch/showAward?AWD_ID=2507940\&HistoricalAwards=false}{DMS-2507940} and the Susan M. Smith Chair.}
\address{Department of Mathematics\\
University of Michigan\\
Ann Arbor, MI 48109 \\
United States
}
\email{erhan@umich.edu}

\author[Z. Cao]{Zhongyuan Cao}
\address{NYU-ECNU Institute of Mathematical Sciences \\
NYU Shanghai \\
Shanghai, 200126 \\
People’s Republic of China
}
\email{zc3151@nyu.edu}

\author[J. Jian]{Jiamin Jian}
\address{Department of Mathematics\\
University of Michigan\\
Ann Arbor, MI 48109\\
United States
}
\email{jiaminj@umich.edu }

\date{}

\begin{document}
\maketitle

\begin{abstract}
We investigate the asymptotic behavior and turnpike properties of graphon mean field control (GMFC) problems in the linear-quadratic setting. We consider both a finite-horizon GMFC problem and its associated ergodic counterpart, in which the controlled dynamics are governed by a graphon mean field stochastic differential equation with heterogeneous interactions. The optimal controls and state trajectories for both problems are characterized by systems of Riccati equations together with systems of generalized differential and algebraic equations on suitable Hilbert spaces. Under a stabilizability condition and appropriate positivity assumptions on the graphon-induced operators, we establish the unique solvability of the ergodic control problem and derive exponential convergence estimates for the finite-horizon system to its stationary limit. As a consequence, we establish an exponential turnpike property for the optimal pair and prove the convergence of the time-averaged value function for the finite-horizon GMFC problem.

\bigskip
\noindent{\bf Keywords:} Graphon mean field control, heterogeneous interactions, turnpike property, Riccati system, stabilizability, ergodicity.

\noindent{\bf AMS subject classification:} 
Primary: 93E15, 
93E20, 
49N10, 
Secondary: 
49N90, 
34H05. 
\end{abstract}


\section{Introduction}

Mean field control problems arise naturally as infinite-population limits of social optimization problems, in which a central planner coordinates a large number of interacting agents to minimize a collective cost. Such problems have been extensively studied over the past decade and have found broad applications, particularly in modeling cooperative social optimization for large populations of interacting agents. Foundational contributions include \cite{Nourian-Caines-Malhame-Huang-2012, Bensoussan-2013, CD18I}, which established the basic formulation and analytical framework of mean field control. 

Subsequent theoretical developments in mean field control have employed both the stochastic maximum principle \cite{Andersson-Djehiche-2011, Buckdahn-Li-Ma-2016} and dynamic programming approaches \cite{Pham-Wei-2017}. Infinite-horizon mean field control has also been investigated in \cite{Bayraktar-Zhang-2023} through the solvability of the associated infinite-horizon McKean--Vlasov forward-backward stochastic differential equations (FBSDEs). In the linear-quadratic (LQ) setting, \cite{Yong-2013} derived explicit solutions through FBSDEs and Riccati equations, with further extensions addressing nonhomogeneous terms \cite{Sun-2017, Li-Sun-Yong-2016}, random coefficients \cite{Xiong-Xu-2025}, and infinite-horizon control problems \cite{Huang-Li-Yong-2015}. For mean field control problems with common noise, in which the limiting state dynamics become conditional McKean--Vlasov processes, we refer to \cite{Pham-Wei-2017, Li-Mou-Wu-Zhou-2025}.

Classical mean field control models typically assume that agents are homogeneous or exchangeable, an assumption that fails to capture the heterogeneous and asymmetric interaction structures arising in many large-scale systems in economics, finance, engineering, social networks, and systemic risk. The theory of graphons \cite{lovasz2012} provides a natural and rigorous framework for modeling such systems in the infinite-population limit. A graphon is a symmetric measurable function $G$ on $[0, 1]^2$ that serves as the continuum limit of dense graph sequences. In graphon mean field models, agents are indexed by labels in $[0, 1]$, and the strength of interaction between two agents is governed by the value of the kernel $G$ at their respective labels.

Mean field control problems in non-exchangeable systems with heterogeneous interactions have received comparatively little attention. In the linear-quadratic setting, GMFC problems were studied in \cite{Crescenze-Feo-Pham-2025, Dunyak-Caines-2026, Gao-Caines-2019, Liang-Wang-Zhang-2021, Xu-Gou-Huang-2024}. Using dynamic programming
on flows of families of probability measures, \cite{Crescenzo-Pham-2026} studied GMFC problems via PDE methods. A version of the stochastic maximum principle for the optimal control of non-exchangeable systems was developed in \cite{Kharroubi-Mekkaoui-Pham-2025}, and a comprehensive probabilistic analysis of general GMFC problems, including well-posedness of the associated FBSDEs, a Pontryagin maximum principle, and a propagation-of-chaos result, was carried out in \cite{Cao-Lauriere-2025}. Graphon mean field FBSDEs arising from a form of Pontryagin's maximum principle
were analyzed in \cite{Aurell-Carmona-Lauriere-2022} for the linear-quadratic case. More generally, the well-posedness and propagation of chaos for graphon mean field SDEs and coupled graphon mean field FBSDEs have been studied in \cite{Bayraktar-Chakraborty-Wu-2023} and \cite{Bayraktar-Wu-Zhang-2023}, respectively. On the game-theoretic side, the framework of graphon mean field games (GMFGs) and the associated GMFG equations were formulated in \cite{Caines2021graphon, Caines2022graphon}.

On the other hand, the long-time behavior and turnpike properties of optimization problems have attracted considerable attention in recent years. We refer to \cite{Trelat-Zuazua-2025} for a comprehensive survey of turnpike theory and its various applications. The turnpike phenomenon refers to the tendency of optimal controls and state trajectories, over long time horizons, to remain close to those of a corresponding stationary or ergodic problem for most of the time interval, except near the temporal boundaries. This concept originated in mathematical economics and has become a central topic in deterministic optimal control; see \cite{Porretta-Zuazua-2013, DGSW-2014, Trelat-Zuazua-2015, Gugat-Herty-Segala-2024} and the references therein. The study of turnpike phenomena in stochastic optimal control is comparatively more recent. A significant breakthrough was achieved by Sun, Wang, and Yong \cite{Sun-Wang-Yong-2022}, who established the first rigorous turnpike results for stochastic linear-quadratic optimal control problems, initiating a systematic investigation of stochastic turnpike behavior. 
Subsequent works extended this theory to broader classes of problems, including mean field-type control problems \cite{Sun-Yong-2024, Bayraktar-Jian-2025}, mean field control problems with common noise \cite{Bayraktar-Jian-2026}, and control problems with regime switching \cite{Mei-Wang-Yong-2025, Mei-Wang-Yong-2-2025}. Driven by the growing interest in understanding long-term equilibrium behavior in large interacting particle systems, the study of turnpike properties in multiplayer differential games \cite{Cohen-Jian-2025} and mean field games \cite{CLLP-2013, Cardaliaguet-Porretta-2019, Cirant-Porretta-2021} has emerged as a rapidly advancing field.

Despite these developments, the long-time behavior and turnpike properties of GMFC problems have not yet been studied in the literature, to the best of our knowledge. The present paper aims to bridge this gap by investigating turnpike properties for GMFC problems in the linear-quadratic setting.

We consider both a finite-horizon GMFC problem and its associated ergodic counterpart, in which the controlled dynamics are governed by a graphon mean field SDE with heterogeneous mean field interactions. The central objective is to characterize the long-time behavior of optimal controls and state trajectories in the finite-horizon problem by studying their convergence to the optimal solution of the ergodic problem. Our approach is inspired by the framework introduced in \cite{Jian-Jin-Song-Yong-2026} for the turnpike property of LQ stochastic control problems and subsequently extended in \cite{Bayraktar-Jian-2025} to mean field control problems. The present work further develops this framework to accommodate the graphon structure and the associated label-indexed heterogeneous interactions. Our main result establishes an exponential turnpike property: away from the temporal boundaries, the optimal pair of the finite-horizon GMFC problem remains exponentially close to the optimal pair of the associated ergodic GMFC problem. More precisely, under a suitable stabilizability condition and appropriate positivity assumptions on the graphon-induced operators, we prove that there exist positive constants $K$ and $\lambda$, independent of $T$, such that 
$$\mathbb{E} \Big[\int_I |X_T^u(t) - X_{\infty}^u(t)|^2 du \Big] + \mathbb{E} \Big[\int_I |\alpha_T^u(t) - \alpha_{\infty}^u(t)|^2 du \Big] \leq K \big(e^{-\lambda t} + e^{-\lambda(T-t)} \big),$$
for all $t \in [0, T]$. Here, $I = [0, 1]$ is the label set, while $(X^u_T, \alpha^u_T)_{u \in I}$ and $(X^u_{\infty}, \alpha^u_{\infty})_{u \in I}$ denote the optimal state-control pairs for the finite-horizon and ergodic GMFC problems, respectively, with possibly different initial states.

\subsection{Main contributions and organization of the paper}

The main contributions of this paper are summarized as follows: 
\begin{itemize}
\item[(i)] \textit{Solvability of the ergodic GMFC problem.} We formulate the ergodic GMFC problem in the linear-quadratic setting and then characterize its solution through a new system of algebraic equations associated with the graphon structure. A key step is the derivation of a fundamental relation for the cost functional of the ergodic problem, expressed as the sum of a quadratic term in the control and a constant term, using the solution to this algebraic system; see Lemma \ref{l:fundamental_relation_ergodic}. Under a suitable stabilizability condition, we then establish the existence and uniqueness of a stabilizing solution and prove the solvability of the ergodic control problem; see Proposition \ref{p:solvability_ergodic_control}. The central technical challenge lies in establishing the unique solvability of the algebraic system. To this end, we introduce Assumption \ref{a:operators}, a joint positive semidefiniteness condition on the operator-valued matrices formed from the solutions to the Riccati equations and the coefficients of the state dynamics and cost functional. This condition is sufficient to guarantee the unique solvability of the algebraic system.

\item[(ii)] \textit{Exponential convergence of Riccati systems.} For both the finite-horizon and ergodic GMFC problems, the solutions are characterized by systems of generalized ordinary differential equations and algebraic equations posed on suitable Hilbert spaces. A central step in establishing the turnpike property is the derivation of quantitative exponential convergence estimates between the solution of the finite-horizon system and the stabilizing solution of the corresponding algebraic system arising in the ergodic control problem; see Lemma~\ref{l:exponential_estimates_P_Pi}, Proposition~\ref{p:exponential_estimates_Lambda}, and Proposition~\ref{p:exponential_estimates_p}. The derivation of these estimates requires a careful treatment of the graphon-induced structure and the associated systems of generalized equations on Hilbert spaces.

\item[(iii)] \textit{Turnpike property.} Building on the convergence estimates for the generalized systems, we establish an exponential turnpike property for the optimal pair of the finite-horizon GMFC problem, showing that it remains exponentially close to the optimal pair of the ergodic GMFC problem for most of the time horizon; see Theorem \ref{t:turnpike_property}. Moreover, we prove the convergence of the time-averaged value function in 
Lemma \ref{l:convergence_value_function}. To the best of our knowledge, this is the first turnpike result for GMFC problems with heterogeneous interactions encoded by a graphon.
\end{itemize}

The remainder of the paper is organized as follows: Section \ref{s:finite_control} presents the formulation and solvability of the finite-horizon GMFC problem. Specifically, the probabilistic setup and problem setup are introduced in Section \ref{s:probabilistic_setup} and Section \ref{s:problem_setup_finite_control}, respectively, the system of generalized Riccati differential equations is derived in Section \ref{sec:System_Riccati_equations_finite}, and the solvability of the Riccati system and finite-horizon control problem is established in Section \ref{s:main_result_finite_control}. Section \ref{s:ergodic_control} is devoted to the ergodic GMFC problem: the stabilizability condition for the state equation is introduced in Section \ref{s:stabilizability_condition}, the system of algebraic equations is derived in Section \ref{s:system_algebraic_equations}, and the solvability of the ergodic control problem together with the characterization of its optimal pair are established in Section \ref{s:main_result_ergodic_control}. In Section \ref{s:convergence_Riccati_system}, we derive the quantitative exponential convergence estimates between the solution of the finite-horizon system and the stabilizing solution of the algebraic system associated with the ergodic control problem. The main results of the paper are presented in Section \ref{s:main_result}. Specifically, the turnpike property for the optimal pairs of the finite-horizon GMFC problem is established in Section \ref{s:turnpike_properties}, and the convergence of the time-averaged value function is proved in Section \ref{s:convergence_value}. Finally, Section \ref{s:solvability_algebraic} provides a sufficient condition for the unique solvability of the generalized algebraic equations arising in the ergodic problem.

We close this section by introducing some frequently used notation.

\subsection{Notation}


For given positive integers $d, m \in \mathbb N$, we use $\mathbb R^d$ and $\mathbb R^{d \times m}$ to denote the standard $d$-dimensional real Euclidean space and the Euclidean space consisting of all $d \times m$ real matrices, respectively. Additionally, let $\mathbb S^d$, $\mathbb S^d_{+}$, and $\mathbb S^{d}_{++}$ denote the sets of all $d \times d$ real symmetric matrices, symmetric positive semidefinite matrices, and symmetric positive definite matrices, respectively. For matrices $P, Q \in \mathbb S^d$, we write $P \geq Q$ (respectively, $P > Q$) if and only if the matrix $P - Q$ is positive semidefinite (respectively, positive definite).
We use $\bm{I}_d$ to denote the $d \times d$ identity matrix.

We denote by $\lan \cd, \cd \ran$ the inner product of two vectors, and by $|\cdot|$ the Euclidean norm on the corresponding Euclidean vector space. Moreover, we use the superscript $\top$ to indicate the transpose operation of matrices. For a matrix $A$, we denote by $\|A\|$ and $\|A\|_F$ its spectral norm and Frobenius norm, respectively.

We denote by $C([0,T];\RR^d)$ the set of continuous mappings from $[0,T]$ to $\RR^d$, and by $C^1([0, T]; \RR^d)$ the subset consisting of functions whose first-order derivatives are continuous. 
Let $E$ be a Banach space endowed with the norm $\|\cdot\|_{E}$. We denote by $\cP_2(E)$ the space of probability measures on $E$ with finite second moments, equipped with the 2-Wasserstein distance. For a linear operator $\mathcal{L}$ on $E$, its operator norm is defined by $\|\mathcal L\|_{op} = \sup_{\|x\|_{E} = 1} \|\mathcal{L}(x)\|_{E}$. Moreover, for $I = [0,1]$ and $p \in \{1, 2\}$, we denote by $L^p(I; E)$ (respectively, $L^{\infty}(I; E)$) the set of elements $\phi = (\phi^u)_{u \in I}$ such that $u \mapsto \phi^u \in E$ is measurable and $\int_I \|\phi^u\|^p_{E} du < \infty$ (respectively, $\sup_{u \in I} \|\phi^u\|_{E} < \infty$). We also denote by $L^2(I \times I; E)$ (respectively $L^{\infty}(I \times I; E)$) the set of measurable functions $h: I \times I \to E$ such that $\|h\|^2_{L^2(I \times I)}:= \int_I \int_I \|h(u, v)\|^2_{E} dv du < \infty$ (respectively $\sup_{u, v \in I} \|h(u, v)\|_{E} < \infty$). Specifically, for a kernel $h \in L^2(I \times I; \mathbb{R}^{d \times d})$, we introduce Hilbert--Schmidt integral operators, denoted by $\bm{h}$, i.e., for $\phi \in L^2(I; \mathbb{R}^d)$, 
\begin{equation}
\label{eq:Hilbert_Schmidt_operator}
\bm{h}(\phi)(u) := \int_I h(u, v) \phi^v dv, \quad \|\bm{h}\|_{op} \leq \|h\|_{L^2(I \times I)} = \Big(\int_I \int_I \|h(u, v)\|^2_{F} \, dvdu \Big)^{\frac{1}{2}}.
\end{equation}
We also write
$$\lan \bm{h}(\phi), \phi \ran_{L^2} := \int_I \int_I \lan \phi^u, h(u, v) \phi^v \ran dv du.$$



\section{Finite-horizon control problem}
\label{s:finite_control}


\subsection{Probabilistic setup}
\label{s:probabilistic_setup}

We first state our probabilistic setting. Let $T>0$ be a finite horizon, let $(\Omega,\cF,\PP)$ be a complete probability space, and let $I:=[0,1]$ be endowed with its Borel $\sigma$-algebra. 
For every $u\in I$, we consider an $\RR$-valued one-dimensional Brownian motion $W^u$ and an independent random variable $\zeta^u$ that is uniformly distributed on $(0,1)$. For simplicity, we consider dynamics driven by one-dimensional Brownian motions, but all the results extend to multidimensional Brownian motions. We assume that $\{(W^u,\zeta^u)\,:\, u\in I\}$ is an independent family; that is, $(W^u,\zeta^u)$ and $(W^v,\zeta^v)$ are independent for $u,v\in I$ with $u\neq v$. For every $u\in I$, we denote by $(\cF^{W^u}_t)_{t\ge 0}$ the natural filtration generated by $W^u$, by $\sigma(\zeta^u)$ the $\sigma$-algebra generated by $\zeta^u$, and by $\FF^u=(\cF^{u}_t)_{t\ge 0}$ the filtration defined by
$\cF^{u}_t= \cF^{W^u}_t\vee \sigma(\zeta^u) \vee \mathcal{N}_{\PP}, t\ge0,
$
where $\mathcal{N}_{\PP}$ is the family of $\PP$-null sets. For a random variable $X$, we denote its law by $\text{Law}(X)$.

We denote by $\kL^2_t$ the set of collections $\xi=(\xi^u)_{u \in I}$ of $\RR^d$-valued random variables such that for each $u\in I$, $\xi^u$ is $\cF_t^u$-measurable, the map $I\ni u\mapsto \text{Law}(W^u_{\cdot\wedge t},\zeta^u,\xi^u)\in \cP_2(C([0,T];\RR)\times (0,1)\times \RR^d))$ is measurable, and $\int_I\EE[|\xi^u|^2]d u <\infty$.



\subsection{Finite-horizon control problem}
\label{s:problem_setup_finite_control}

In this section, we formulate the finite-horizon linear-quadratic graphon mean field control problem over $[0, T]$. We begin by specifying the class of graphons considered throughout the paper. In \cite{lovasz2012}, a graphon is defined as a measurable symmetric function from $I\times I$ to $I$. In this paper, we consider a more general class of graphons, which is suitable for modeling both dense graphs and graphs in the intermediate regime. For simplicity, we refer to these functions simply as graphons. More precisely, a graphon is a measurable function $G: I \times I \rightarrow \RR$ belonging to $L^2(I \times I)$. 

\textbf{Admissible controls.} In this paper, we take $\RR^m$ as the action space. Given $\xi \in \kL^2_0$, an $I$-collection of $\RR^m$-valued stochastic processes $\alpha = (\alpha^u)_{u\in I}$ is called an admissible control profile if there exists a Borel measurable function $a:I\times[0,T]\times C([0,T];\RR)\times (0,1)\to \RR^m$ such that, for each $u\in I$, $\alpha^u(t)=a(u,t,W^u_{\cdot\wedge t},\zeta^u)$ and $\int_I\int_0^T \EE[|\alpha^u(t)|^2]dtdu <\infty$. In particular, for each $u \in I$, the process $(\alpha^u(t))_{t \geq 0}$ is $\mathbb{F}^u$-predictable. We denote the set of admissible control profiles by $\cA^{\xi}[0, T]$ and simply write $\cA[0, T]$ whenever no confusion can arise.


For a given admissible control profile $\alpha = (\alpha^u)_{u\in I}\in \cA[0, T]$, the corresponding state process is governed by the following controlled graphon mean field dynamics, represented by a continuum system of SDEs: 
\begin{equation}
\label{eq:state_process}
\begin{cases}
dX^u(t) = \big\{AX^u(t) + \bar{A} \bar X^u(t) + \wt{A} [G\bar{X}(t)]^u + B\alpha^u(t) + b \big\}dt 
\\
\hspace{0.7in} +\big\{CX^u(t) + \bar{C} \bar X^u(t) + \wt{C}[G\bar{X}(t)]^u + D \alpha^u(t) + \sigma \big\}dW^u(t), \\
X^u(0) = \xi^u,
\end{cases}
\end{equation}
for each $u \in I$, where $\xi \in \kL^2_0$ is the initial condition, $b, \sigma \in \RR^d$, $A, \bar A, \wt{A}, C,\bar C, \wt{C} \in \RR^{d\times d}$, and $B,D\in \RR^{d\times m}$. Here, we define $\bar{X}^u(t) := \EE[X^u(t)]$, and use $X(t) = (X^u(t))_{u\in I}$ and $\bar{X}(t) = (\bar X^u(t))_{u\in I}$ to denote the entire collections of state processes and their expectations, respectively. We also use the notation
\begin{equation}\label{eq:GX}
    [G\bar{X}(t)]^u=\int_IG(u,v)\bar X^v(t)dv.
\end{equation}
We use the superscript $\cdot^{\alpha}$ when it is necessary to emphasize the dependence of a process on the control profile $\alpha$. When the context is clear, we omit this superscript to simplify the notation.

\textbf{Cost functional.} Given $\xi \in \mathfrak{L}^2_0$, the objective of the finite-horizon GMFC problem is to minimize, over all $\alpha \in \mathcal A[0, T]$, a cost functional of the form
\begin{equation}
\label{eq:cost_functional_finite_horizon}
\begin{aligned}
J_T(\xi, \alpha) &:= \;  \int_I\EE\Big[  \int_0^T  \big(\langle X^u(s), Q X^u(s) \rangle + 2\langle \alpha^u(s), S X^u(s) \rangle + \langle \alpha^u(s), R \alpha^u(s) \rangle \\
&\hspace{0.9in} + \langle \bar{X}^u(s), \bar{Q} \bar{X}^u(s) \rangle + \langle \bar{X}^u(s), \wt{Q} [G\bar{X}(s)]^u\rangle \\
&\hspace{0.9in} + \langle [G\bar{X}(s)]^u, \check{Q} [G\bar{X}(s)]^u\rangle \big)  ds \Big] du,
\end{aligned}
\end{equation}
where $Q, \bar{Q}, \check{Q} \in \bS^d_{+}$, $\wt{Q} \in \mathbb{R}^{d \times d}$, $S \in \RR^{m\times d}$, and $R\in\bS^m_{++}$. 

By \cite[Theorem 2.6]{Crescenzo-Pham-2026}, for any given admissible control profile $\alpha = (\alpha^u)_{u\in I} \in \cA[0, T]$ and any initial state $\xi\in \kL^2_0$, the state equation \eqref{eq:state_process} admits a unique solution. Moreover, the map $I\ni u\mapsto \text{Law}(X^u,W^u,\zeta^u)\in \cP_2(C([0,T];\RR^d)\times C([0,T];\RR)\times (0,1))$ is measurable, which in particular gives the measurability of $u\mapsto (\EE[\langle X^u(s), X^u(s) \rangle], \EE[\langle \alpha^u(s),X^u(s) \rangle],\EE[\langle \alpha^u(s),\alpha^u(s) \rangle],\bar{X}^u(s))$, for all $s\in[0,T]$. As a consequence, for each 
$t\in[0,T]$, $[G\bar{X}(t)]^u$ is well defined for all $u\in I$ and the map $u\mapsto [G\bar{X}(t)]^u$ is measurable. It therefore follows that all integrals with respect to $u$ in the cost functional \eqref{eq:cost_functional_finite_horizon} are well defined.

\begin{remark}
\label{r:relation_coefficient}
To relate our formulation to the GMFC problem (2.1)-(2.3) considered in \cite{Crescenze-Feo-Pham-2025}, we match the coefficients in the state dynamics and the cost functional as follows:
\begin{equation}
\label{eq:coefficients_compare}
\begin{aligned}
& A^u = A, \, B^u = B, \, \beta^u = b, \, C^u = C, \, D^u = D, \, \gamma^u = \sigma, \\
& G_A (u, v) = \wt{A} G(u, v) + \bar{A} \delta(u-v), \, G_C (u, v) = \wt{C} G(u, v) + \bar{C} \delta(u-v), \\
& Q^u = Q, \, R^u = R, \, 0 = S, \, 0 = \check{Q}, \, H^u = 0, \, G_H(u, v) = 0, \\
& G_Q (u, v) = \wt{Q} G(u, v) + \bar{Q} \delta(u-v),
\end{aligned}
\end{equation}
where $\delta$ denotes the Dirac delta function. This identification follows from the identity
\begin{equation*}
\begin{aligned}
\bar{A} \bar X^u(t) + \wt{A} [G\bar{X}(t)]^u = \int_I \big(\bar{A} \delta(u-v) + \wt{A}G(u, v) \big) \bar{X}^v(t) dv,
\end{aligned}
\end{equation*}
with analogous relations holding for the terms involving $(\bar C, \wt C)$ and $(\bar Q, \wt Q)$. 
In \eqref{eq:coefficients_compare}, the notation on the left-hand side corresponds to the framework of \cite{Crescenze-Feo-Pham-2025}, while the right-hand side corresponds to our setting. We emphasize that \cite{Crescenze-Feo-Pham-2025} assumes that the graphons $G_A, G_C$, and $G_Q$ belong to $L^2(I \times I; \mathbb R^{d \times d})$. The presence of the Dirac term $\delta$ implies that the graphons in our formulation are distributions rather than square-integrable functions and therefore do not satisfy this $L^2$-integrability requirement. Because our cost functional includes the state-control cross term $\langle \alpha^u(s), S X^u(s) \rangle$, it is more general than the one considered in \cite{Crescenze-Feo-Pham-2025}. Since the classes of admissible controls and initial conditions considered in \cite{Crescenze-Feo-Pham-2025} do not guarantee the measurability of the map $u\mapsto \EE[\int_0^T\langle \alpha^u(s), S X^u(s) \rangle ds]$, we adopt classes of admissible controls and initial conditions analogous to those introduced in \cite{Crescenzo-Pham-2026}, rather than those used in \cite{Crescenze-Feo-Pham-2025}.
\end{remark}

\subsection{System of Riccati equations}
\label{sec:System_Riccati_equations_finite}

Following the standard approach in linear-quadratic control theory, we begin with a quadratic ansatz for the value function of the form
\begin{equation}
\label{eq:value_function_ansatz_finite}
\begin{aligned}
\mathbb{V}_T(t, \xi) &= \int_I \EE[\langle \xi^u, P_T(t) \xi^u \rangle] du + \int_I \langle \bar{\xi}^u, \Pi_T(t) \bar{\xi}^u \rangle du + \int_I \int_I \langle \bar{\xi}^u, \Lambda_T(t)(u, v) \bar{\xi}^v \rangle dv du \\
& \hspace{0.3in} + 2 \int_I \langle \bar{\xi}^u, p_T(t)(u)\rangle du + \int_I \kappa_T(t)(u) du,
\end{aligned}
\end{equation}
where $\bar{\xi}^u = \mathbb{E}[\xi^u]$ for all $u \in I$, and $P_T, \Pi_T \in C([0, T]; \bS^d)$, $\Lambda_T \in C([0, T]; L^2(I \times I; \RR^{d \times d}))$, $p_T \in C([0, T]; L^2(I; \RR^d))$, and $\kappa_T \in C([0, T]; L^1(I; \RR))$ are suitable functions to be determined. A standard strategy in finite- and infinite-dimensional linear-quadratic control problems is to use the ansatz \eqref{eq:value_function_ansatz_finite} and It\^o's formula to decompose the cost functional $J_T$ in \eqref{eq:cost_functional_finite_horizon} into the sum of a quadratic term in the control and a constant term, leading to suitable Riccati equations. To this end, we define
\begin{equation*}
\begin{aligned}
\mathcal V_T(t) &:= \int_I \EE[\langle X^u(t), P_T(t) X^u(t) \rangle] du + \int_I \langle \bar{X}^u(t), \Pi_T(t) \bar{X}^u(t) \rangle du \\
& \hspace{0.3in} + \int_I \int_I \langle \bar{X}^u(t), \Lambda_T(t)(u, v) \bar{X}^v(t) \rangle dv du + 2 \int_I \langle \bar{X}^u(t), p_T(t)(u)\rangle du + \int_I \kappa_T(t)(u) du.
\end{aligned}
\end{equation*}
By the definition of $\mathcal V_T$, we can apply the standard It\^o's formula for a.e. $u \in I$ and then integrate the resulting identity over $I$. Moreover, we introduce the notation
\begin{equation*}
\begin{aligned}
\mathcal Y_T(t) &:= \mathcal V_T(t) + \int_0^t \int_I \mathbb E\big[\langle X^u(s), Q X^u(s) \rangle + 2\langle \alpha^u(s), S X^u(s) \rangle + \langle \alpha^u(s), R \alpha^u(s) \rangle \\
&\hspace{0.9in} + \langle \bar{X}^u(s), \bar{Q} \bar{X}^u(s) \rangle + \langle \bar{X}^u(s), \wt{Q} [G\bar{X}(s)]^u\rangle \\
&\hspace{0.9in} + \langle [G\bar{X}(s)]^u, \check{Q} [G\bar{X}(s)]^u\rangle \big] du ds,
\end{aligned}
\end{equation*}
and
\begin{equation*}
\begin{aligned}
b^u(t) &:= AX^u(t) + \bar{A} \bar X^u(t) + \wt{A} [G\bar{X}(t)]^u + B\alpha^u(t) + b, \\
\sigma^u(t) &:= CX^u(t) + \bar{C} \bar X^u(t) + \wt{C}[G\bar{X}(t)]^u + D \alpha^u(t) + \sigma.
\end{aligned}
\end{equation*}
Taking expectations gives the dynamics of $\bar{X}^u$:
$$\dot{\bar{X}}^u(t) = \bar{b}^u(t), \quad \bar{X}^u(0) = \bar{\xi}^u$$
for a.e. $u \in I$, where 
$$\bar{b}^u(t) := (A +\bar{A}) \bar X^u(t) + \wt{A} [G\bar{X}(t)]^u + B\bar{\alpha}^u(t) + b.$$
Then, we obtain
\begin{equation*}
\begin{aligned}
& d \mathcal Y_T(t) \\
:= \ & \int_I \EE \big[\lan b^u(t), P_T(t) X^u(t)\ran + \lan X^u(t), \dot{P}_T(t) X^u(t) \ran + \lan X^u(t), P_T(t) b^u(t) \ran + \lan \sigma^u(t), P_T(t) \sigma^u(t) \ran \big] du dt \\
&\hspace{0.1in} + \int_I \lan \bar{b}^u(t), \Pi_T(t) \bar{X}^u(t) \ran + \lan \bar{X}^u(t), \dot{\Pi}_T(t) \bar{X}^u(t) \ran + \lan \bar{X}^u(t), \Pi_T(t) \bar{b}^u(t) \ran du dt \\
&\hspace{0.1in} + \int_I \int_I \lan \bar{b}^u(t), \Lambda_T(t)(u, v) \bar{X}^v(t) \ran + \lan \bar{X}^u(t), \dot{\Lambda}_T(t)(u, v) \bar{X}^v(t) \ran + \lan \bar{X}^u(t), \Lambda_T(t)(u, v) \bar{b}^v(t) \ran dv du dt \\
&\hspace{0.1in} + \int_I 2 \lan \bar{b}^u(t), p_T(t)(u) \ran + 2 \lan \bar{X}^u(t), \dot{p}_T(t)(u) \ran du dt + \int_I \dot{\kappa}_T(t)(u) du dt \\
&\hspace{0.1in} + \int_I \mathbb E\big[\langle X^u(s), Q X^u(s) \rangle + 2\langle \alpha^u(s), S X^u(s) \rangle + \langle \alpha^u(s), R \alpha^u(s) \rangle + \langle \bar{X}^u(s), \bar{Q} \bar{X}^u(s) \rangle \\
&\hspace{0.7in} + \langle \bar{X}^u(s), \wt{Q} [G\bar{X}(s)]^u\rangle + \langle [G\bar{X}(s)]^u, \check{Q} [G\bar{X}(s)]^u\rangle \big] du dt.
\end{aligned}
\end{equation*}

Next, we rewrite the above expression in a suitable form by considering the quadratic terms involving $X^u, \bar{X}^u$, and $\alpha^u$; the cross term involving $\bar{X}^u$ and $\bar{X}^v$; and the linear terms in $\bar{X}^u$ and $\alpha^u$. To simplify the notation, for $P, \Pi \in \bS^d_{+}$, we define
\begin{equation*}
\begin{aligned}
& \cQ(P) = A^\top P + PA +C^\top PC + Q, \\
& \bar{\cQ}(P, \Pi) = \bar{A}^\top P + P \bar{A} + C^\top P \bar{C} + \bar{C}^\top P C + \bar{C}^\top P \bar{C} + (A+\bar{A})^\top \Pi + \Pi(A+\bar{A}) + \bar{Q}, \\
& \wh{\cQ}(P, \Pi) =(A+\bar{A})^\top (P + \Pi) + (P + \Pi) (A + \bar{A}) + (C + \bar{C})^\top P(C + \bar{C}) + Q + \bar{Q}, \\
& \cM(P) = B^\top P + D^\top P C + S, \quad \bar{\cM}(P, \Pi) = B^\top \Pi + D^\top P \bar{C}, \\
& \wh{\cM}(P, \Pi) = B^\top (P + \Pi) + D^\top P (C + \bar{C}) + S, \quad \cR(P) = R + D^\top P D.
\end{aligned}
\end{equation*}
It is immediate that $\wh{\cQ}(P, \Pi) = \cQ(P) + \bar{\cQ}(P, \Pi)$ and $\wh{\cM}(P, \Pi) = \cM(P) + \bar{\cM}(P, \Pi)$. Moreover, for $P, \Pi \in \bS^d_{+}$, $\Lambda \in L^2(I \times I; \RR^{d \times d})$, and $p \in L^2(I; \mathbb R^d)$, we define
\begin{equation*}
\begin{aligned}
& \Psi_1(P, \Pi, \Lambda)(u, v) \\
= \ & (P + \Pi) \wt{A} G(u, v) + \wt{A}^\top (P + \Pi) G(v, u) + (C + \bar{C})^\top P \wt{C} G(u, v) + \wt{C}^\top P(C+ \bar{C}) G(v, u) \\
& \hspace{0.1in}  + \int_I \wt{C}^\top G(w,u) P \wt{C} G(w,v) dw + (A + \bar{A})^\top \Lambda(u, v) + \Lambda(v, u)^\top (A + \bar{A}) \\
& \hspace{0.1in} + \int_I \Lambda(w,u)^\top \wt{A} G(w,v) dw + \int_I \wt{A}^\top G(w,u) \Lambda(w, v) dw + \wt{Q} G(u, v) \\
& \hspace{0.1in} + \int_I G(w,u) \check{Q} G(w, v) dw,
\end{aligned}
\end{equation*}
\begin{equation*}
\begin{aligned}
\Psi_2(P, \Pi, \Lambda, p)(u) 
&= (A + \bar{A})^\top p(u) + \wt{A}^\top \int_I G(v, u) p(v) dv + (P+\Pi) b + (C + \bar{C})^\top P \sigma \\
& \hspace{0.3in}  + \wt{C}^\top \int_I G(v,u) P \sigma dv + \int_I \frac{1}{2} \big(\Lambda(u, v) + \Lambda(v, u)^\top \big)  b dv,
\end{aligned}
\end{equation*}
and 
$$\Upsilon(P, \Lambda)(u, v) = B^\top \Lambda(u, v) + D^\top P \wt{C} G(u, v).$$

Using the notation introduced above, we derive that
\begin{equation*}
\begin{aligned}
d \mathcal Y_T(t) &= \int_I \EE\big[\lan (\dot{P}_T(t) + \cQ(P_T(t))) X^u(t), X^u(t) \ran \big] du dt \\
&\hspace{0.3in} + \int_I \lan (\dot{\Pi}_T(t) + \bar{\cQ}(P_T(t), \Pi_T(t))) \bar{X}^u(t), \bar{X}^u(t) \ran du dt \\
&\hspace{0.3in} + \int_I\int_I \lan (\dot{\Lambda}_T(t) + \Psi_1(P_T(t), \Pi_T(t), \Lambda_T(t)))(u, v) \bar{X}^v(t), \bar{X}^u(t) \ran dv du dt \\
&\hspace{0.3in}  + 2 \int_I \lan \dot{p}_T(t)(u) + \Psi_2(P_T(t), \Pi_T(t), \Lambda_T(t), p_T(t))(u), \bar{X}^u(t) \ran du dt \\
&\hspace{0.3in} + \int_I \dot{\kappa}_T(t)(u) + \lan \sigma, P_T(t) \sigma \ran + 2 \lan b, p_T(t)(u) \ran du dt \\
&\hspace{0.3in} + \int_I \EE[\lan \alpha^u(t), \cR(P_T(t)) \alpha^u(t) \ran + \lan \alpha^u(t), 2 \mathcal Z^u_T(t) \ran] du dt,
\end{aligned}
\end{equation*}
where 
\begin{equation*}
\begin{aligned}
\mathcal Z^u_T(t) &:= \cM(P_T(t)) X^u(t) + \bar{\cM}(P_T(t), \Pi_T(t)) \bar{X}^u(t) + D^\top P_T(t) \sigma + B^\top p_T(t)(u) \\
& \hspace{0.3in} + \int_I \Upsilon(P_T(t), \Lambda_T(t))(u, v) \bar{X}^v(t) dv.
\end{aligned}
\end{equation*}
By completing the square with respect to $\alpha^u$, we have
\begin{equation*}
\begin{aligned}
& \int_I \EE[\lan \alpha^u(t), \cR(P_T(t)) \alpha^u(t) \ran + \lan \alpha^u(t), 2 \mathcal Z^u_T(t) \ran] du \\
= \ & \int_I \EE \big[\lan \cR(P_T(t)) \{\alpha^u(t) + \cR(P_T(t))^{-1} \mathcal Z^u_T(t)\}, \alpha^u(t) + \cR(P_T(t))^{-1} \mathcal Z^u_T(t) \ran \big] du \\
& \hspace{0.1in} - \int_I \EE\big[\lan \cM(P_T(t))^\top \cR(P_T(t))^{-1} \cM(P_T(t)) X^u(t), X^u(t) \ran \big] du \\
& \hspace{0.1in} - \int_I \lan \{ \cM(P_T(t))^\top \cR(P_T(t))^{-1} \bar{\cM}(P_T(t), \Pi_T(t)) + \bar{\cM}(P_T(t), \Pi_T(t))^\top \cR(P_T(t))^{-1} \cM(P_T(t)) \\
& \hspace{0.5in} + \bar{\cM}(P_T(t), \Pi_T(t))^\top \cR(P_T(t))^{-1} \bar{\cM}(P_T(t), \Pi_T(t))\} \bar{X}^u(t), \bar{X}^u(t) \ran du \\
& \hspace{0.1in} - \int_I \big\lan 2 \wh{M}(P_T(t), \Pi_T(t))^\top \cR(P_T(t))^{-1} (D^\top P_T(t) \sigma + B^\top p_T(t)(u)) \\
& \hspace{0.5in} + 2 \int_I \Upsilon(P_T(t), \Lambda_T(t))(v, u)^\top \cR(P_T(t))^{-1} (D^\top P_T(t) \sigma + B^\top p_T(t)(v)) dv, \bar{X}^u(t) \big\ran du \\
& \hspace{0.1in} - \int_I (D^\top P_T(t) \sigma + B^\top p_T(t)(u))^\top \cR(P_T(t))^{-1} (D^\top P_T(t) \sigma + B^\top p_T(t)(u)) du \\
& \hspace{0.1in} - \int_I\int_I \big\lan \bar{X}^u(t), \big\{\wh{M}(P_T(t), \Pi_T(t))^\top \cR(P_T(t))^{-1} \Upsilon(P_T(t), \Lambda_T(t))(u, v) \\
& \hspace{0.5in} + \Upsilon(P_T(t), \Lambda_T(t))(v, u)^\top \cR(P_T(t))^{-1} \wh{M}(P_T(t), \Pi_T(t)) \\
& \hspace{0.5in} + \int_I \Upsilon(P_T(t), \Lambda_T(t))(w, u)^\top \cR(P_T(t))^{-1} \Upsilon(P_T(t), \Lambda_T(t))(w, v) dw \big\} \bar{X}^v(t) \big\ran dv du.
\end{aligned}
\end{equation*}
Substituting this expression into the preceding equation for $d\mathcal Y_T(t)$, we obtain the desired representation of $d \mathcal Y_T(t)$ and derive the corresponding system of equations for $P_T, \Pi_T, \Lambda_T, p_T$, and $\kappa_T$.

Next, we present the system of equations satisfied by the coefficient functions in \eqref{eq:value_function_ansatz_finite}. We start with the Riccati equation for $P_T$:
\begin{equation}
\label{eq:ODE_P_T}
\begin{cases}
\dot{P}_T(t) + \cQ(P_T(t)) - \cM(P_T(t))^\top \cR(P_T(t))^{-1} \cM(P_T(t)) = 0, \\
P_T(T) = 0 \in \bS^d.
\end{cases}
\end{equation}
Note that \eqref{eq:ODE_P_T} is a standard Riccati equation. A solution to \eqref{eq:ODE_P_T} is a function $P_T \in C^1([0, T]; \bS^d_{+})$ such that \eqref{eq:ODE_P_T} holds for all $t \in [0, T]$. Given such a solution $P_T$ to \eqref{eq:ODE_P_T}, we next consider the equation satisfied by $\bar{\Pi}_T := P_T + \Pi_T$:
\begin{equation}
\label{eq:ODE_Pi_T}
\begin{cases}
\dot{\bar{\Pi}}_T(t) + \wh{\cQ}(P_T(t), \Pi_T(t)) - \wh{\cM}(P_T(t), \Pi_T(t))^\top \cR(P_T(t))^{-1} \wh{\cM}(P_T(t), \Pi_T(t)) = 0, \\
\bar{\Pi}_T(T) = 0 \in \bS^d.
\end{cases}
\end{equation}
The Riccati equation for $\bar{\Pi}_T$ in \eqref{eq:ODE_Pi_T} is also standard. We call $\bar{\Pi}_T$ a solution to \eqref{eq:ODE_Pi_T} if it is a function in $C^1([0, T]; \bS^d_{+})$ such that \eqref{eq:ODE_Pi_T} holds for all $t \in [0, T]$.

Given solutions $P_T$ and $\bar{\Pi}_T$ to \eqref{eq:ODE_P_T} and \eqref{eq:ODE_Pi_T}, respectively, we introduce the following abstract Riccati equation on the Hilbert space $L^2(I \times I; \RR^{d \times d})$:
\begin{equation}
\begin{cases}
\label{eq:ODE_Lambda_T}
\dot{\Lambda}_T(t)(u, v) + F(t, \Lambda_T(t))(u, v) = 0, \\
\Lambda_T(T)(u, v) = 0 \in \mathbb{R}^{d \times d},
\end{cases}
\end{equation}
for a.e. $u, v \in I$, where $F: [0, T] \times L^2(I \times I; \RR^{d \times d}) \to L^2(I \times I; \RR^{d \times d})$ is defined by
\begin{equation*}
\begin{aligned}
F(t, \Lambda)(u, v) &:= \Psi_1(P_T(t), \Pi_T(t), \Lambda)(u, v) - \wh{\cM}(P_T(t), \Pi_T(t))^\top \cR(P_T(t))^{-1} \Upsilon(P_T(t), \Lambda)(u, v) \\
& \hspace{0.5in} - \Upsilon(P_T(t), \Lambda)(v, u)^\top \cR(P_T(t))^{-1} \wh{\cM}(P_T(t), \Pi_T(t)) \\
& \hspace{0.5in} - \int_I \Upsilon(P_T(t), \Lambda)(w, u)^\top \cR(P_T(t))^{-1} \Upsilon(P_T(t), \Lambda)(w, v) dw,
\end{aligned}
\end{equation*}
for $\Lambda \in L^2(I \times I; \RR^{d \times d})$, $t\in [0, T]$, for a.e. $u, v \in I$. It is clear that
$$F(t, \Lambda)(v, u)^\top = F(t, \Lambda^\top)(u, v), \quad \forall t \in [0, T], \, \Lambda \in L^2(I \times I; \mathbb{R}^{d \times d}), \text{ and a.e. } u, v \in I.$$
By a solution to \eqref{eq:ODE_Lambda_T}, we mean a function $\Lambda_T \in C^1([0, T]; L^2(I \times I; \RR^{d \times d}))$ satisfying \eqref{eq:ODE_Lambda_T} for all $t \in [0, T]$. Thus, if $\Lambda_T$ is a solution to \eqref{eq:ODE_Lambda_T}, then it satisfies
\begin{equation}
\label{eq:Lambda_T_symmetric}
\Lambda_T(t)(v, u)^\top = \Lambda_T(t)(u, v), \quad \forall t \in [0, T], \text{ and a.e. } u, v \in I.
\end{equation}
Note that \eqref{eq:ODE_Lambda_T} is not an operator-valued Riccati equation of the type typically considered in the classical literature on control problems in Hilbert spaces.

Given solutions $P_T, \bar{\Pi}_T$, and $\Lambda_T$ to \eqref{eq:ODE_P_T}, \eqref{eq:ODE_Pi_T}, and \eqref{eq:ODE_Lambda_T}, respectively, we introduce the following linear equation with terminal condition for $p_T$ on $L^2(I; \RR^d)$:
\begin{equation}
\label{eq:ODE_p_T}
\begin{cases}
\dot{p}_T(t)(u) + \wt{F}(t, p_T(t))(u) = 0, \\
p_T(T)(u) = 0 \in \mathbb{R}^d,
\end{cases}
\end{equation}
for a.e. $u \in I$, where $\wt{F}:[0, T] \times L^2(I; \RR^d) \to L^2(I; \RR^d)$ is defined by
\begin{equation*}
\begin{aligned}
\wt{F}(t, p)(u) &:= \Psi_2(P_T(t), \Pi_T(t), \Lambda_T(t), p)(u) - \wh{\cM}(P_T(t), \Pi_T(t))^\top \cR(P_T(t))^{-1} \Gamma(P_T(t), p)(u) \\
& \hspace{0.5in} - \int_I \Upsilon(P_T(t), \Lambda_T(t))(v, u)^\top \cR(P_T(t))^{-1} \Gamma(P_T(t), p)(v) dv,
\end{aligned}
\end{equation*}
for all $t \in [0, T]$ and $p \in L^2(I; \RR^d)$ and for a.e. $u \in I$, where $\Gamma: \bS^d \times L^2(I; \RR^d) \to L^2(I;\RR^m)$ is defined by
$$\Gamma(P_T(t), p)(u) := D^\top P_T(t) \sigma + B^\top p(u).$$
Equation \eqref{eq:ODE_p_T} is a standard linear ODE on the Hilbert space $L^2(I; \RR^d)$. A solution to \eqref{eq:ODE_p_T} is a function $p_T \in C^1([0, T]; L^2(I; \RR^d))$ such that \eqref{eq:ODE_p_T} holds for all $t \in [0, T]$.

Finally, we introduce the following linear differential equation for $\kappa_T$:
\begin{equation}
\label{eq:ODE_kappa_T}
\begin{cases}
\dot{\kappa}_T(t)(u) - \langle \Gamma(P_T(t), p_T(t))(u), \cR(P_T(t))^{-1} \Gamma(P_T(t), p_T(t))(u) \rangle \\
\hspace{0.5in} + \langle P_T(t) \sigma, \sigma \rangle + 2 \langle p_T(t)(u), b \rangle  = 0, \\
\kappa_T(T)(u) = 0,
\end{cases}
\end{equation}
for all $t \in [0, T]$ and for a.e. $u \in I$. A solution to \eqref{eq:ODE_kappa_T} is a function $\kappa_T \in C^1([0, T]; L^1(I; \RR))$ such that \eqref{eq:ODE_kappa_T} holds for all $t \in [0, T]$.

Using the notation introduced above in the system of differential equations, we can summarize the differential $d \mathcal Y_T(t)$ as follows:
\begin{equation*}
\begin{aligned}
& d \mathcal Y_T(t) \\
= \ & \int_I \EE\big[\lan (\dot{P}_T(t) + \cQ(P_T(t)) - \cM(P_T(t))^\top \cR(P_T(t))^{-1} \cM(P_T(t))) X^u(t), X^u(t) \ran \big] du dt \\
&\hspace{0.1in} + \int_I \Big\lan \Big\{\frac{d}{dt}(\bar{\Pi}_T(t) - P_T(t)) + \wh{\cQ}(P_T(t), \Pi_T(t)) - \wh{\cM}(P_T(t), \Pi_T(t))^\top \cR(P_T(t))^{-1} \wh{\cM}(P_T(t), \Pi_T(t)) \\
&\hspace{0.8in} - \cQ(P_T(t)) + \cM(P_T(t))^\top \cR(P_T(t))^{-1} \cM(P_T(t)) \Big\} \bar{X}^u(t), \bar{X}^u(t) \Big\ran du dt \\
&\hspace{0.1in} + \int_I\int_I \lan (\dot{\Lambda}_T(t)(u, v) + F(t, \Lambda_T(t))(u, v)) \bar{X}^v(t), \bar{X}^u(t) \ran dv du dt \\
&\hspace{0.1in}  + 2 \int_I \lan \dot{p}_T(t)(u) + \wt{F}(t, p_T(t))(u), \bar{X}^u(t) \ran du dt \\
&\hspace{0.1in} + \int_I \big(\dot{\kappa}_T(t)(u) - \langle \Gamma(P_T(t), p_T(t))(u), \cR(P_T(t))^{-1} \Gamma(P_T(t), p_T(t))(u) \rangle \\
&\hspace{0.8in} + \langle P_T(t) \sigma, \sigma \rangle + 2 \langle p_T(t)(u), b \rangle \big) du dt \\
&\hspace{0.1in} + \int_I \EE \big[\lan \cR(P_T(t)) \{\alpha^u(t) + \cR(P_T(t))^{-1} \mathcal Z^u_T(t)\}, \alpha^u(t) + \cR(P_T(t))^{-1} \mathcal Z^u_T(t) \ran \big] du dt.
\end{aligned}
\end{equation*}
Thus, if $P_T, \bar{\Pi}_T, \Lambda_T, p_T$, and $\kappa_T$ are solutions to the system of differential equations \eqref{eq:ODE_P_T}, \eqref{eq:ODE_Pi_T}, \eqref{eq:ODE_Lambda_T}, \eqref{eq:ODE_p_T}, and \eqref{eq:ODE_kappa_T}, respectively, then $d \mathcal Y_T(t)$ reduces to
\begin{equation}
\label{eq:differential}
d \mathcal Y_T(t) = \int_I \EE \big[\lan \cR(P_T(t)) \{\alpha^u(t) + \cR(P_T(t))^{-1} \mathcal Z^u_T(t)\}, \alpha^u(t) + \cR(P_T(t))^{-1} \mathcal Z^u_T(t) \ran \big] du dt.
\end{equation}

The identity \eqref{eq:differential} is essential for deriving the so-called fundamental relation of the finite-horizon GMFC problem. In particular, it yields a decomposition of the cost functional $J_T(\xi,\alpha)$ in \eqref{eq:cost_functional_finite_horizon} into a control-independent constant term and a nonnegative quadratic functional of the control process, which is crucial for characterizing and solving the finite-horizon GMFC problem.

\begin{proposition}
\label{p:fundamental_relation}
Let $P_T, \bar{\Pi}_T, \Lambda_T, p_T$, and $\kappa_T$ be solutions to the system of differential equations \eqref{eq:ODE_P_T}, \eqref{eq:ODE_Pi_T}, \eqref{eq:ODE_Lambda_T}, \eqref{eq:ODE_p_T}, and \eqref{eq:ODE_kappa_T}, respectively. Then, for all $\xi \in \kL_0^2$ and $\alpha \in \mathcal A[0, T]$,
\begin{equation}
\label{eq:fundamental_relation}
\begin{aligned}
J_T(\xi, \alpha) &= \mathbb{V}_T(0, \xi) + \int_0^T \int_I \EE \big[\lan \cR(P_T(t)) \{\alpha^u(t) + \cR(P_T(t))^{-1} \mathcal Z^u_T(t)\}, \\
&\hspace{1.5in} \alpha^u(t) + \cR(P_T(t))^{-1} \mathcal Z^u_T(t) \ran \big] du dt,
\end{aligned}
\end{equation}
where $\mathbb{V}_T(t, \xi)$ is defined in \eqref{eq:value_function_ansatz_finite}.
\end{proposition}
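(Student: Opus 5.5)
The plan is to integrate the differential identity \eqref{eq:differential} over $[0,T]$ and read off the two boundary values of $\mathcal Y_T$. By definition, $\mathcal Y_T(0)=\mathcal V_T(0)$. Since $X^u(0)=\xi^u$, this equals $\mathbb V_T(0,\xi)$. At the terminal time, the conditions $P_T(T)=0$, $\bar\Pi_T(T)=0$ (hence $\Pi_T(T)=0$), $\Lambda_T(T)=0$, $p_T(T)=0$ and $\kappa_T(T)=0$ give $\mathcal V_T(T)=0$. Therefore $\mathcal Y_T(T)$ is exactly the running cost integrated over $[0,T]$, which is $J_T(\xi,\alpha)$ after using Fubini to exchange the $ds$, $du$ and $\EE$ integrals. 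Subtracting, $J_T(\xi,\alpha)-\mathbb V_T(0,\xi)=\int_0^T d\mathcal Y_T(t)$, and \eqref{eq:differential} then yields \eqref{eq:fundamental_relation}.

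The substance of the proof is justifying that $t\mapsto\mathcal Y_T(t)$ is absolutely continuous with derivative given by the computation preceding the proposition. I would proceed term by term.

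\textbf{The term $\int_I\EE[\langle X^u,P_TX^u\rangle]du$.} For a.e. $u$, apply It\^o's formula to $\langle X^u(t),P_T(t)X^u(t)\rangle$, using $P_T\in C^1$. The stochastic integral $\int_0^t 2\langle P_TX^u,\sigma^u\rangle dW^u$ is a true martingale. One way to see this is that $\EE\sup_{t\le T}|X^u(t)|^2<\infty$ by standard SDE estimates, given $\int_0^T\EE|\alpha^u|^2dt<\infty$ and $\bar X^u$, $[G\bar X]^u$ bounded in $t$. Alternatively, localize with stopping times and pass to the limit by dominated convergence. Taking expectations and then integrating in $u$ requires an integrable bound in $u$. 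For this I would use $\int_I\sup_t\EE|X^u(t)|^2du<\infty$, obtained from a Gronwall estimate integrated over $I$, together with $\|[G\bar X(t)]\|_{L^2(I)}\le\|G\|_{L^2(I\times I)}\|\bar X(t)\|_{L^2(I)}$ from \eqref{eq:Hilbert_Schmidt_operator}. Measurability in $u$ of all the integrands is supplied by the measurability of $u\mapsto\mathrm{Law}(X^u,W^u,\zeta^u)$ recalled after \eqref{eq:cost_functional_finite_horizon}.

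\textbf{The deterministic terms.} These involve $\Pi_T$, $\Lambda_T$, $p_T$ and $\kappa_T$, and are differentiable functions of $t$ with values in the appropriate spaces. The term $\int\int\langle\bar X^u,\Lambda_T(u,v)\bar X^v\rangle$ is a bilinear pairing. Its differentiability follows because $\Lambda_T\in C^1([0,T];L^2(I\times I))$ and $\bar X\in W^{1,1}([0,T];L^2(I))$, the latter holding since $\dot{\bar X}=\bar b\in L^2(0,T;L^2(I))$. The product rule then gives the stated expression. The terms involving $p_T$ and $\kappa_T$ are handled the same way, using $p_T\in C^1([0,T];L^2(I))$ and $\kappa_T\in C^1([0,T];L^1(I))$.

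\textbf{Main obstacle.} I expect the main obstacle to be the bookkeeping in the algebraic regrouping that leads to \eqref{eq:differential}. Each graphon term such as $\wt C[G\bar X]^u$, when inserted into $\langle\sigma^u,P_T\sigma^u\rangle$ or $\langle\bar b^u,\Lambda_T\bar X^v\rangle$, must be rewritten via Fubini as a kernel acting on $\bar X^v$. This is how the integral terms $\int_I G(w,u)\cdots G(w,v)\,dw$ in $\Psi_1$ and $F$ arise, and it uses the symmetry \eqref{eq:Lambda_T_symmetric} of $\Lambda_T$. The completion of the square with respect to $\alpha^u$ relies on $\cR(P_T(t))\ge R>0$, which holds because $P_T\ge0$. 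The fluctuation part $X^u-\bar X^u$ pairs only with $P_T$, while the means pair with $\Pi_T$, $\Lambda_T$ and $p_T$. Since this regrouping was carried out in the text, I would verify only the Fubini exchanges, which are justified by the $L^2$ integrability noted above. The remaining steps follow by cancellation using \eqref{eq:ODE_P_T} through \eqref{eq:ODE_kappa_T}.
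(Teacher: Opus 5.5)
Your proposal is correct and matches the paper's proof: integrate \eqref{eq:differential} over $[0,T]$, use $X^u(0)=\xi^u$ to identify $\mathcal V_T(0)$ with $\mathbb V_T(0,\xi)$, and use the zero terminal conditions to get $\mathcal V_T(T)=0$. The extra justifications you supply are details the paper leaves implicit, and they are sound: the true-martingale property of the It\^o integral, the Fubini exchanges, the symmetry of $\Lambda_T$, and the absolute continuity of $t\mapsto\mathcal Y_T(t)$ via $\bar X\in W^{1,1}([0,T];L^2(I;\RR^d))$.
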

\begin{proof}
By integrating \eqref{eq:differential} over $[0, T]$, for all $\xi \in \kL_0^2$ and $\alpha \in \mathcal A[0, T]$, we obtain
\begin{equation*}
\begin{aligned}
& \int_0^T \int_I \EE \big[\lan \cR(P_T(t)) \{\alpha^u(t) + \cR(P_T(t))^{-1} \mathcal Z^u_T(t)\}, \alpha^u(t) + \cR(P_T(t))^{-1} \mathcal Z^u_T(t) \ran \big] du dt \\
= \ & \mathcal V_T(T) - \mathcal V_T(0) + \int_0^T \int_I \mathbb E\big[\langle X^u(s), Q X^u(s) \rangle + 2\langle \alpha^u(s), S X^u(s) \rangle + \langle \alpha^u(s), R \alpha^u(s) \rangle \\
&\hspace{0.5in} + \langle \bar{X}^u(s), \bar{Q} \bar{X}^u(s) \rangle + \langle \bar{X}^u(s), \wt{Q} [G\bar{X}(s)]^u\rangle + \langle [G\bar{X}(s)]^u, \check{Q} [G\bar{X}(s)]^u\rangle \big] du ds \\
= \ & - \mathcal V_T(0) + J_T(\xi, \alpha) = - \mathbb{V}_T(0, \xi) + J_T(\xi, \alpha),
\end{aligned}
\end{equation*}
where we use the fact that $X^u(0) = \xi^u$ and the terminal conditions for $P_T, \bar{\Pi}_T, \Lambda_T, p_T$, and $\kappa_T$. Then, the desired result follows.
\end{proof}

\subsection{Solvability of the finite-horizon control problem}
\label{s:main_result_finite_control}

To ensure the solvability of the system of differential equations \eqref{eq:ODE_P_T}, \eqref{eq:ODE_Pi_T}, \eqref{eq:ODE_Lambda_T}, \eqref{eq:ODE_p_T}, and \eqref{eq:ODE_kappa_T}, as well as that of the finite-horizon GMFC problem \eqref{eq:state_process}-\eqref{eq:cost_functional_finite_horizon}, we impose the following assumption.

\begin{assumption}
\label{a:coefficients_solvability_finite}
The matrices $Q, \bar{Q} \in \mathbb{S}^d$, $S \in \mathbb{R}^{m \times d}$, and $R \in \mathbb{S}^m_{++}$ satisfy
$$Q - S^\top R^{-1} S \in \mathbb S^d_{++}, \quad Q + \bar{Q} - S^\top R^{-1} S \in \mathbb S^d_{++}.$$
Moreover, $\check{Q}$ is positive semidefinite, and the kernel $\wt{Q}G(u,v)$ defines a positive operator on $L^2(I; \mathbb R^d)$, i.e.,
$$\int_I \int_I \lan \phi^u, \wt{Q} G(u, v) \phi^v \ran dv du \geq 0, \quad \forall \phi \in L^2(I; \mathbb{R}^d).$$
\end{assumption}
\begin{remark}
\label{r:cost_function_finite}
We give the following remarks regarding Assumption \ref{a:coefficients_solvability_finite}:
\begin{itemize}
\item[\textnormal{(i)}] Under the above assumption, one can show that
$$J_T(\xi, \alpha) \geq 0, \quad \forall \xi \in \kL_0^2 \text{ and } \alpha \in \mathcal{A}[0, T].$$ 
Indeed, for all $x \in \mathbb{R}^d$ and $a \in \mathbb{R}^m$,
\begin{equation*}
\lan x, Qx \ran + 2 \lan a, Sx \ran + \lan a, Ra\ran = \lan x, (Q - S^\top R^{-1} S) x \ran + \lan a + R^{-1}Sx, R(a + R^{-1}Sx) \ran.
\end{equation*}
Thus, the nonnegativity of $J_T$ follows by completing the square. In fact, this observation resolves the main challenge pointed out in \cite[Remark 6.2]{Crescenze-Feo-Pham-2025}, where the cross term in state and control is excluded from the cost functional $J_T$ because its contribution is not automatically nonnegative, which in turn leads to difficulties in establishing the unique solvability of $\Lambda_T$ in \eqref{eq:ODE_Lambda_T}.

\item[\textnormal{(ii)}] To illustrate the assumption that $\wt{Q}G(u,v)$ is a positive kernel on $L^2(I; \mathbb R^d)$, we provide several explicit examples. Assume that $\wt{Q}$ is positive semidefinite, and let $\wt{Q}^{1/2}$ denote its symmetric positive semidefinite square root. Consider the following cases:
\begin{itemize}
\item $G(u, v) = g$ for some constant $g \geq 0$ for all $u, v \in I$. This case reduces to the standard mean field control case, see, for instance, \cite{Bayraktar-Jian-2025}; 
\item $G(u, v) = g(u) g(v)$ for all $u, v \in I$, where $g \in L^2(I; \mathbb{R})$. Then, for all $\phi \in L^2(I; \mathbb{R}^d)$, 
$$\int_I \int_I \lan \phi^u, \wt{Q} G(u, v) \phi^v \ran dv du = \Big|\int_I g(u) \wt{Q}^{1/2} \phi^u du \Big|^2 \geq 0$$
and $\|G\|_{L^2(I \times I)} = \|g\|^2_{L^2(I)} < \infty$. More generally, we could let
$G(u, v) = \sum_{i = 1}^{k} g_i(u) g_i(v)$ for $g_1, \dots, g_k \in L^2(I; \mathbb{R})$; 
\item $G(u,v) = \sum_{i=1}^{\infty} \lambda_i e_{i}(u) e_{i}(v)$ for all $u, v \in I$, where $\{e_i\}_{i \geq 1}$ is an orthonormal family in $L^2(I; \mathbb{R})$, and $\lambda_i > 0$ with $\sum_{i=1}^{\infty} \lambda_i^2 < \infty$;
\item $G$ is a positive kernel on $L^2(I;\mathbb{R})$. Typical examples include the Gaussian kernel, i.e., $G(u, v) = e^{-(u-v)^2}$, and the Brownian covariance kernel, i.e., $G(u, v) = \min(u, v)$, for all $u, v \in I$.    
\end{itemize}

\item[\textnormal{(iii)}] The positivity assumption on the kernel $\wt{Q}G(u,v)$, viewed as an operator on $L^2(I; \mathbb R^d)$, is imposed primarily to prove the unique solvability of $\Lambda_T$ in \eqref{eq:ODE_Lambda_T}; see Proposition \ref{p:solvability_system_finite}. It also serves as a sufficient condition for the coercivity of the finite-horizon cost functional $J_T$ in \eqref{eq:cost_functional_finite_horizon}. 
\end{itemize}
\end{remark}

\begin{proposition}
\label{p:solvability_system_finite}
Let Assumption \ref{a:coefficients_solvability_finite} hold. Then, for any $T > 0$, the differential Riccati equations \eqref{eq:ODE_P_T} and \eqref{eq:ODE_Pi_T} admit a unique solution pair $(P_T, \bar{\Pi}_T) \in C^1([0, T]; \mathbb{S}^d_{+}) \times C^1([0, T]; \mathbb{S}^d_{+})$. Moreover, the abstract Riccati equation \eqref{eq:ODE_Lambda_T} admits a unique solution $\Lambda_T \in C^1([0, T]; L^2(I \times I; \RR^{d \times d}))$, while the linear equations \eqref{eq:ODE_p_T} and \eqref{eq:ODE_kappa_T} admit unique solutions $p_T \in C^1([0, T]; L^2(I; \RR^d))$ and $\kappa_T \in C^1([0, T]; L^{1}(I; \RR))$, respectively.
\end{proposition}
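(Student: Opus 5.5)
The proof proceeds in the order of the equations, since each one depends only on those already solved.

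\emph{Step 1: $P_T$ and $\bar\Pi_T$.} Equation \eqref{eq:ODE_P_T} is the standard Riccati equation of a stochastic LQ problem with state weight $Q$, cross term $S$, control weight $R>0$ and zero terminal weight. Because $Q-S^\top R^{-1}S>0$ and $R>0$, classical theory (e.g.\ Yong--Zhou) gives a unique solution $P_T\in C^1([0,T];\bS^d_{+})$, and $\cR(P_T(t))\ge R>0$ on $[0,T]$. With $P_T$ fixed, \eqref{eq:ODE_Pi_T} is a Riccati equation for $\bar\Pi_T$. It has the form $\dot{\bar\Pi}+(A+\bar A)^\top\bar\Pi+\bar\Pi(A+\bar A)+\widehat Q(t)-(B^\top\bar\Pi+\widehat S(t))^\top\cR(P_T)^{-1}(B^\top\bar\Pi+\widehat S(t))=0$, with $\widehat Q(t)=(C+\bar C)^\top P_T(C+\bar C)+Q+\bar Q$ and $\widehat S(t)=D^\top P_T(C+\bar C)+S$. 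This is a deterministic LQ Riccati equation with time-varying weights. Its positivity condition is $\widehat Q-\widehat S^\top\cR(P_T)^{-1}\widehat S\ge0$. We check it by writing
\[
\begin{pmatrix}(C+\bar C)^\top P_T(C+\bar C) & (C+\bar C)^\top P_TD\\ D^\top P_T(C+\bar C) & D^\top P_TD\end{pmatrix}\ge0
\]
and adding the matrix $\begin{pmatrix}Q+\bar Q & S^\top\\ S& R\end{pmatrix}$, which is positive definite by the Schur complement and Assumption \ref{a:coefficients_solvability_finite}. The Schur complement of the sum is then positive. Hence there is a unique global solution $\bar\Pi_T\in C^1([0,T];\bS^d_+)$, and we set $\Pi_T=\bar\Pi_T-P_T$.

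\emph{Step 2: $\Lambda_T$, the main obstacle.} We identify $\Lambda\in L^2(I\times I;\RR^{d\times d})$ with its Hilbert--Schmidt operator $\bm\Lambda$ on $H=L^2(I;\RR^d)$. Under this identification $F(t,\Lambda)$ becomes an operator Riccati expression
\[
\bm\Lambda\mathcal A(t)+\mathcal A(t)^*\bm\Lambda+\mathcal N(t)-(\mathcal B^*\bm\Lambda+\mathcal S(t))^*\cR(P_T)^{-1}(\mathcal B^*\bm\Lambda+\mathcal S(t))+\mathcal K(t)
\]
with the following ingredients:
\begin{itemize}
\item $\mathcal A=(A+\bar A)+\wt A\bm G$, acting pointwise plus through $\bm G$;
\item $\mathcal B=B$, acting pointwise;
\item $\mathcal S=D^\top P_T\wt C\bm G$;
\item Hilbert--Schmidt terms collecting $\bar\Pi_T\wt A\bm G$, $(C+\bar C)^\top P_T\wt C\bm G$, $\wt Q\bm G$, $\bm G^*\check Q\bm G$, $\bm G^*\wt C^\top P_T\wt C\bm G$, and the cross terms with $\widehat{\cM}$.
\end{itemize}
The right-hand side is locally Lipschitz on $L^2(I\times I)$, since products of Hilbert--Schmidt kernels are controlled by \eqref{eq:Hilbert_Schmidt_operator}, and it preserves the symmetry $\Lambda(v,u)^\top=\Lambda(u,v)$. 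A local $C^1$ solution backward from $T$ therefore exists by Picard iteration, and it is unique.

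Globality requires an a priori bound on $\|\Lambda_T(t)\|_{L^2}$. We would obtain it from the control interpretation. The combined operator $\bar\Pi_T+\bm\Lambda_T(t)$ is the value operator of the deterministic LQ problem on $H$ for the mean $\bar X$, namely $\dot{\bar X}=\mathcal A\bar X+B\bar\alpha$, with the cost obtained from the mean part of the verification identity. By Step 1's Schur argument and the positivity of $\wt Q\bm G$ and $\check Q$, this cost is nonnegative. Along the local solution we then get
\[
0\le\langle(\bar\Pi_T(t)+\bm\Lambda_T(t))\phi,\phi\rangle\le J(\text{zero control}),
\]
and Gronwall bounds the latter by $K\|\phi\|^2_H$ uniformly on $[0,T]$. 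This gives a uniform bound on $\|\bm\Lambda_T(t)\|_{op}$. Operator-norm bounds alone do not control the $L^2(I\times I)$ norm, so we feed this bound back into the integral form of the equation. The quadratic term then splits into a bounded operator times a Hilbert--Schmidt factor, which makes the equation effectively linear in $\Lambda$ in the HS norm, and Gronwall gives the HS bound. This step is the delicate one, so we would carry out the positivity and coercivity check explicitly.

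\emph{Step 3: $p_T$ and $\kappa_T$.} Equation \eqref{eq:ODE_p_T} is a linear ODE on $L^2(I;\RR^d)$ with bounded continuous coefficients, namely pointwise matrices plus the Hilbert--Schmidt operators $\bm G^*$ and $\bm\Lambda_T$, and an $L^2$ forcing term. It has a unique $C^1$ solution by the standard linear theory and Gronwall. Finally, $\kappa_T$ is given explicitly by integrating the right-hand side of \eqref{eq:ODE_kappa_T} from $t$ to $T$. That right-hand side lies in $C([0,T];L^1(I))$ because $|p_T(t)(u)|^2$ is integrable in $u$, which gives existence and uniqueness in $C^1([0,T];L^1(I;\RR))$.
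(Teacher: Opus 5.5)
Your proposal is correct. Steps 1 and 3 agree with the paper, which simply invokes \cite[Lemma 2.1]{Sun-Yong-2024} for $P_T,\bar\Pi_T$; your Schur-complement check is what that lemma requires. For $\Lambda_T$, however, you take a genuinely different route in two places.

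\emph{The a priori operator-norm bound.} The paper restarts the stochastic fundamental relation \eqref{eq:fundamental_relation} at time $t$ with $b=\sigma=0$.
\begin{itemize}
\item The upper bound $\langle\bm\Lambda_T(t)\phi,\phi\rangle_{L^2}\le J_T(t,\phi,0)\le c_T\|\phi\|^2_{L^2}$ comes from $P_T\ge0$ and $\bar\Pi_T\ge0$.
\item The lower bound comes from $J_T\ge0$ evaluated at the optimal feedback.
\end{itemize}
You instead use the deterministic mean problem on $H$ whose value operator is $\bar\Pi_T+\bm\Lambda_T(t)$. This works. Adding \eqref{eq:ODE_Pi_T} and \eqref{eq:ODE_Lambda_T}, and using the symmetry of $\Lambda_T$, gives exactly the operator Riccati equation of that problem. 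The cross term becomes $(\wh{\cM}+\bm\Upsilon)^*\cR^{-1}(\wh{\cM}+\bm\Upsilon)$. Its running cost is $\langle(\bar X,\bar\alpha),\begin{pmatrix}Q+\bar Q&S^\top\\ S&R\end{pmatrix}(\bar X,\bar\alpha)\rangle+\langle m,P_Tm\rangle$ plus the $\wt Q$ and $\check Q$ terms, where $m=(C+\bar C)\bar X+\wt C\bm G\bar X+D\bar\alpha$. So nonnegativity follows directly from Assumption~\ref{a:coefficients_solvability_finite} and $P_T\ge0$.

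Your argument is purely deterministic and avoids restarting the SDE from data in $\kL^2_t$. The price is a separate completion-of-squares verification on $H$, which the paper gets for free from Proposition~\ref{p:fundamental_relation}. Your route also shows why Assumption~\ref{a:coefficients_solvability_finite} suffices here, while the paper's later deterministic problem for $\bm\Lambda_T$ alone (in the proof of Proposition~\ref{p:solvability_system_ergodic}) needs Assumption~\ref{a:operators}. Keeping $\bar\Pi_T$ inside the value operator is what makes the cost automatically nonnegative.

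\emph{The continuation step.} The paper runs its contraction in the metric $\sup_t\|\cdot\|_{L^2(I\times I)}$, but on a set defined by an operator-norm bound. Its Lipschitz constant is $c_T(1+\|\bm\Lambda^1\|_{op}+\|\bm\Lambda^2\|_{op})$. Hence the local step length depends only on the operator norm, and no Hilbert--Schmidt a priori bound is ever needed.

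Your Picard iteration uses the local Lipschitz constant in the $L^2(I\times I)$ norm, so you do need the extra Gronwall step. That step is valid. The kernel of $\bm\Lambda^*M\bm\Lambda$ has $L^2(I\times I)$-norm at most $\|M\|\,\|\bm\Lambda\|_{op}\,\|\Lambda\|_{L^2(I\times I)}$, so once $\|\bm\Lambda_T\|_{op}$ is bounded the equation is effectively linear in the Hilbert--Schmidt norm.

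A minor bookkeeping point: the terms $-\wh{\cM}^\top\cR^{-1}B^\top\Lambda(u,v)$ and their transposes are linear in $\Lambda$. They shift the drift to $A+\bar A+B\bar\Theta^*_T$, so they belong in the linear part of your operator form rather than among the forcing terms. This does not affect the argument.
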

\begin{proof}
First, under Assumption \ref{a:coefficients_solvability_finite}, the unique solvability of the differential Riccati equations \eqref{eq:ODE_P_T} and \eqref{eq:ODE_Pi_T} follows from \cite[Lemma 2.1]{Sun-Yong-2024}. Moreover, we have $P_T(t) \geq 0$ and $\bar{\Pi}_T(t) \geq 0$ for each $t \in [0, T]$. Since $R \in \mathbb{S}^m_{++}$, $\cR(P_T(t))^{-1}$ is well defined and $\|\cR(P_T(t))^{-1}\| \leq \|R^{-1}\|$, which is uniformly bounded for all $t \in [0, T]$.  

Next, we prove the unique solvability of the abstract Riccati equation \eqref{eq:ODE_Lambda_T}. We first show that \eqref{eq:ODE_Lambda_T} admits a unique solution on a local time horizon $[T -\delta, T]$ for some $\delta > 0$, and then extend the solution to the entire interval $[0, T]$ by obtaining an a priori estimate for $\Lambda_T$.

\textit{Step 1}: Let $\delta, r > 0$, and consider the Banach space $C([T- \delta, T]; L^2(I \times I; \mathbb{R}^{d \times d}))$ equipped with the norm $\|\Lambda\|_c^2 = \sup_{t \in [T-\delta, T]} \|\Lambda(t)\|^2_{L^2(I \times I)}$. Define
$$B(r) = \big\{\Lambda \in C([T- \delta, T]; L^2(I \times I; \mathbb{R}^{d \times d})): \sup_{t \in [T - \delta, T]} \|\bm{\Lambda}(t)\|_{op} \leq r \big\},$$
where $\bm{\Lambda}(t)$ is the Hilbert--Schmidt integral operator induced by the kernel $\Lambda(t) \in L^2(I \times I; \mathbb{R}^{d \times d})$; see the definition in \eqref{eq:Hilbert_Schmidt_operator}. Then, $B(r)$ is a closed and convex subset of $C([T- \delta, T]; L^2(I \times I; \mathbb{R}^{d \times d}))$. We define a mapping $\mathcal{T}$ on $B(r)$ as follows:
$$\mathcal{T}(\Lambda_T)(t) = \int_t^T F(s, \Lambda_T(s)) ds, \quad \forall t \in [T - \delta, T].$$
Let $\mathcal{T}(\bm{\Lambda}_T)$ denote the integral operator induced by the kernel $\mathcal{T}(\Lambda_T)$. Since 
$$\|\bm{F}(t, \Lambda_T(t))\|_{op} \leq c_T \big(1 + \|\bm{\Lambda}_T(t)\|_{op} + \|\bm{\Lambda}_T(t)\|^2_{op} \big), \quad \forall t \in [T - \delta, T]$$
for some constant $c_T$, we obtain
$$\|\mathcal{T}(\bm{\Lambda}_T)(t)\|_{op} \leq c_T \int_{T-\delta}^T \big(1 + \|\bm{\Lambda}_T(t)\|_{op} + \|\bm{\Lambda}_T(t)\|^2_{op} \big) dt \leq c_T \delta(1+r+r^2)$$
for all $t \in [T-\delta, T]$. By choosing $\delta > 0$ sufficiently small such that $c_T \delta(1+r+r^2) < r$, we see that $\mathcal{T}$ maps $B(r)$ into itself. Next, we show that $\mathcal{T}$ is a contraction mapping on $B(r)$. Note that, for $\Lambda_T^1, \Lambda_T^2 \in C([T- \delta, T]; L^2(I \times I; \mathbb{R}^{d \times d}))$,
$$\|F(t, \Lambda^1_T(t)) - F(t, \Lambda_T^2(t))\|_{L^2(I \times I)} \leq c_T \big(1 + \|\bm{\Lambda}_T^1(t)\|_{op} + \|\bm{\Lambda}_T^2(t)\|_{op} \big) \|\Lambda_T^1(t) - \Lambda_T^2(t)\|_{L^2(I \times I)},$$
we then derive the following estimate:
\begin{equation*}
\begin{aligned}
& \sup_{t \in [T-\delta, T]} \|\mathcal{T}(\Lambda^1_T)(t) - \mathcal{T}(\Lambda_T^2)(t)\|_{L^2(I \times I)} \\
\leq \ & c_T \int_{T - \delta}^T \big(1 + \|\bm{\Lambda}_T^1(t)\|_{op} + \|\bm{\Lambda}_T^2(t)\|_{op} \big) \|\Lambda_T^1(t) - \Lambda_T^2(t)\|_{L^2(I \times I)} dt \\
\leq \ & c_T \delta (1+2r) \sup_{t \in [T-\delta, T]} \|\Lambda_T^1(t) - \Lambda_T^2(t)\|_{L^2(I \times I)}. 
\end{aligned}
\end{equation*}
Hence, we may further choose $\delta > 0$ sufficiently small such that $c_T \delta (1+2r) < 1$. Then, $\mathcal{T}$ is a contraction mapping on $B(r)$. By the Banach fixed point theorem, the equation \eqref{eq:ODE_Lambda_T} admits a unique solution on $[T - \delta, T]$.

\textit{Step 2}: Let $T_0 = T - \delta$. By the identity \eqref{eq:Lambda_T_symmetric}, $\bm{\Lambda}_T(t)$ is a self-adjoint operator for each $t \in [T_0, T]$. Recall that the norm of a self-adjoint operator can be expressed as
$$\|\bm{\Lambda}_T(t)\|_{op} = \sup_{\|\phi\|_{L^2(I)} \neq 0} \frac{|\lan \bm{\Lambda}_T(t)(\phi), \phi \ran_{L^2}|}{\|\phi\|^2_{L^2(I)}}.$$
Next, we give an estimate for the right-hand side. Set $b = \sigma = 0$. Then, by uniqueness of the solutions to the linear equations \eqref{eq:ODE_p_T} and \eqref{eq:ODE_kappa_T}, we have $p_T(t)(u) = 0$ and $\kappa_T(t) = 0$ for all $t \in [T_0, T]$ and a.e. $u \in I$. Since $\mathcal{R}(P_T(t))$ is positive definite, applying the fundamental relation \eqref{eq:fundamental_relation} with $\alpha = 0$, and replacing the initial time $0$ with $t \in [T_0, T]$ and the corresponding cost functional by $J_T(t, \xi, \alpha)$, we obtain
$$J_T(t, \xi, 0) \geq \mathbb{V}_T(t, \xi) \geq \int_I \int_I \langle \bar{\xi}^u, \Lambda_T(t)(u, v) \bar{\xi}^v \rangle dv du $$
for all $\xi \in \kL^2_{t}$. By standard SDE estimates and Gr\"onwall's inequality, there exists a constant $c_T$ such that, for all $\xi \in \kL_t^2$ and $\alpha \in \mathcal{A}[T_0, T]$, 
$$\int_I \mathbb{E} \big[\sup_{s \in [t, T]} |X^u(s)|^2 \big] du \leq c_T \Big((T-t)(|b|^2 + |\sigma|^2) + \int_I \mathbb{E}[|\xi^u|^2] du + \int_I \int_t^T \mathbb{E}[|\alpha^u(s)|^2] dsdu\Big).$$
Consequently, by the definition of $J_T$ in \eqref{eq:cost_functional_finite_horizon}, possibly with a different constant $c_T$,
$$|J_T(t, \xi, \alpha)| \leq c_T \Big((T-t)(|b|^2 + |\sigma|^2) + \int_I \mathbb{E}[|\xi^u|^2] du + \int_I \int_t^T \mathbb{E}[|\alpha^u(s)|^2] dsdu\Big).$$
In particular, when $b = \sigma = 0$ and $\alpha = 0$, we obtain the following estimate:
$$|J_T(t, \xi, 0)| \leq c_T \int_I \mathbb{E}[|\xi^u|^2] du,$$
which implies that
$$\int_I \int_I \langle \bar{\xi}^u, \Lambda_T(t)(u, v) \bar{\xi}^v \rangle dv du \leq c_T \int_I \mathbb{E}[|\xi^u|^2] du.$$
On the other hand, by \eqref{eq:fundamental_relation} again, letting $\alpha^u(t) = - \mathcal{R}(P_T(t))^{-1}\mathcal{Z}^u_T(t)$, the cost functional reduces to
\begin{equation*}
J_T(t, \xi, \alpha) = \int_I \EE[\langle \xi^u, P_T(t) \xi^u \rangle] du + \int_I \langle \bar{\xi}^u, \Pi_T(t) \bar{\xi}^u \rangle du + \int_I \int_I \langle \bar{\xi}^u, \Lambda_T(t)(u, v) \bar{\xi}^v \rangle dv du.
\end{equation*}
Note that, by Assumption \ref{a:coefficients_solvability_finite}, we have $J_T(t, \xi, \alpha) \geq 0$ for all $\xi \in \kL_t^2$ and $\alpha \in \mathcal{A}[T_0, T]$. Thus, we deduce that
\begin{equation*}
\begin{aligned}
\int_I \int_I \langle \bar{\xi}^u, \Lambda_T(t)(u, v) \bar{\xi}^v \rangle dv du &\geq - \int_I \EE[\langle \xi^u - \bar{\xi}^u, P_T(t) (\xi^u - \bar{\xi}^u) \rangle] du - \int_I \langle \bar{\xi}^u, \bar{\Pi}_T(t) \bar{\xi}^u \rangle du \\
& \geq - c_T \int_I \mathbb{E}[|\xi^u|^2] du
\end{aligned}
\end{equation*}
for some possibly different constant $c_T > 0$. Therefore, one has the following estimate:
\begin{equation*}
\Big|\int_I \int_I \langle \bar{\xi}^u, \Lambda_T(t)(u, v) \bar{\xi}^v \rangle dvdu \Big| \leq c_T \int_I \mathbb{E}[|\xi^u|^2] du.
\end{equation*}
Taking $\xi^u = \phi^u$ for some deterministic function $\phi$, we obtain $\|\bm{\Lambda}_T(t)\|_{op} \leq c_T$ for all $t \in [T_0, T]$.  

\textit{Step 3}: From the result established in \textit{Step 1}, the length $\delta$ of the local existence interval depends on $\Lambda_T(t)$ only through the norm $\|\bm{\Lambda}_T(t)\|_{op}$, for which we have an a priori estimate in \textit{Step 2}. Thus, the local existence argument can be iterated backward in time a finite number of times, yielding the unique global solvability of \eqref{eq:ODE_Lambda_T} on the entire interval $[0, T]$.  

Next, given the solutions $P_T, \bar{\Pi}_T$, and $\Lambda_T$ to \eqref{eq:ODE_P_T}, \eqref{eq:ODE_Pi_T}, and \eqref{eq:ODE_Lambda_T}, respectively, equation \eqref{eq:ODE_p_T} is linear in $p_T$ on $L^2(I; \mathbb{R}^d)$. Thus, standard ODE theory gives a unique solution to \eqref{eq:ODE_p_T}. Finally, equation \eqref{eq:ODE_kappa_T} can be solved pointwise in $u$ by integrating both sides in time, and uniqueness follows from the classical theory of linear ODEs.
\end{proof}

In what follows, we establish a verification theorem, and derive the optimal control together with an explicit characterization of the value function for the finite-horizon GMFC problem \eqref{eq:state_process}-\eqref{eq:cost_functional_finite_horizon}. To simplify the notation, we introduce
\begin{equation*}
\begin{aligned}
& \Theta_T^*(t) := - \cR(P_T(t))^{-1} \cM(P_T(t)), \\
& \bar{\Theta}_T^*(t) := - \cR(P_T(t))^{-1} \wh{\cM}(P_T(t), \Pi_T(t)), \\
& \theta^*_T(t)(u) := - \cR(P_T(t))^{-1} \Gamma(P_T(t), p_T(t))(u), \\
& \wt{\Theta}^*_T(t)(u, v) := - \cR(P_T(t))^{-1}
\Upsilon(P_T(t), \Lambda_T(t))(u, v)
\end{aligned}
\end{equation*}
for all $t \in [0, T]$ and a.e. $u, v \in I$.

\begin{proposition}
\label{p:solvability_finite_control}
Assume that Assumption \ref{a:coefficients_solvability_finite} holds. Let $P_T$ and $\bar{\Pi}_T$ be the unique solutions to the system of standard Riccati differential equations \eqref{eq:ODE_P_T} and \eqref{eq:ODE_Pi_T} on $[0, T]$, respectively. Let $\Lambda_T$ be the unique solution to the abstract Riccati equation \eqref{eq:ODE_Lambda_T} on $[0, T]$, and let $p_T$ and $\kappa_T$ be the unique solutions to the linear equations \eqref{eq:ODE_p_T} and \eqref{eq:ODE_kappa_T} on $[0, T]$, respectively. Define $\Pi_T = \bar{\Pi}_T - P_T$. Then,
\begin{itemize}
\item The unique optimal control $\alpha_T = (\alpha_T^u)_{u \in I}$ for the finite-horizon GMFC problem \eqref{eq:cost_functional_finite_horizon} is given by the following feedback form:
\begin{equation}
\label{eq:optimal_control_finite}
\alpha_T^u(t) = \Theta^*_T(t) (X_T^u(t) - \bar{X}_T^u(t)) + \bar{\Theta}^*_T(t) \bar{X}_T^u(t) + \theta^*_T(t)(u) + \int_I \wt{\Theta}^*_T(t)(u, v)\bar{X}_T^v(t) dv
\end{equation}
for all $t \in [0, T]$, where $X_T = (X_T^u)_{u \in I}$ is the solution to the corresponding closed-loop equation 
\begin{equation}
\label{eq:optimal_state_finite}
\begin{cases}
dX_T^u(t) = \big\{(A+B\Theta^*_T(t)) X_T^u(t) + [\bar{A} + B (\bar{\Theta}^*_T(t) - \Theta^*_T(t))] \bar X_T^u(t) + B\theta^*_T(t)(u) + b \\
\hspace{1in} + \int_I \big(\wt{A}G(u, v) + B \wt{\Theta}^*_T(t)(u, v) \big) \bar{X}_T^v(t) dv\big\}dt 
\\
\hspace{0.7in} +\big\{(C+D\Theta^*_T(t)) X_T^u(t) + [\bar{C} + D (\bar{\Theta}^*_T(t) - \Theta^*_T(t))] \bar X_T^u(t) + D \theta_T^*(t)(u) + \sigma \\
\hspace{1in} + \int_I \big(\wt{C}G(u, v) + D \wt{\Theta}^*_T(t)(u, v)\big) \bar{X}_T^v(t) dv\big\}dW^u(t), \quad t \in [0, T], \\
X^u(0) = \xi^u, \quad u \in I.
\end{cases}
\end{equation}

\item The value function $V_T(\xi) = \inf_{\alpha \in \mathcal A[0, T]} J_T(\xi, \alpha)$ of the finite-horizon GMFC problem is given by $V_T(\xi) = \mathbb{V}_T(0, \xi)$, where $\mathbb{V}_T$ is defined in \eqref{eq:value_function_ansatz_finite}.
\end{itemize}
\end{proposition}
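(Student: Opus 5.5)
This is a verification argument built on the fundamental relation of Proposition \ref{p:fundamental_relation}, with Proposition \ref{p:solvability_system_finite} supplying the solutions $P_T,\bar\Pi_T,\Lambda_T,p_T,\kappa_T$. By \eqref{eq:fundamental_relation}, for every $\alpha\in\mathcal A[0,T]$ we have $J_T(\xi,\alpha)\ge \mathbb V_T(0,\xi)$. The reason is that $\cR(P_T(t))\ge R>0$ because $P_T(t)\ge0$, so the integrand in the remainder is nonnegative. Equality holds exactly when
\[
\alpha^u(t)=-\cR(P_T(t))^{-1}\mathcal Z^u_T(t)\quad \text{for a.e. } (t,u) \text{ and } \PP\text{-a.s.}
\]
A direct expansion of $\mathcal Z^u_T$ gives
\[
\cM X+\bar\cM\bar X=\cM(X-\bar X)+\wh\cM\bar X,\qquad \Gamma=D^\top P_T\sigma+B^\top p_T,\qquad \int_I\Upsilon\,\bar X^v\,dv .
\]
Hence this equality condition is precisely the feedback \eqref{eq:optimal_control_finite}, with $\Theta^*_T,\bar\Theta^*_T,\theta^*_T,\wt\Theta^*_T$ as defined.

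\textbf{Step 1: the closed-loop equation \eqref{eq:optimal_state_finite} is well posed.} Taking expectations gives a linear ODE for $\bar X_T(t)$ in the Hilbert space $L^2(I;\RR^d)$:
\[
\dot{\bar X}_T^u=(A+\bar A+B\bar\Theta^*_T)\bar X_T^u+\int_I\big(\wt A G+B\wt\Theta^*_T\big)(u,v)\bar X^v_T\,dv+B\theta^*_T(u)+b .
\]
Its coefficients are continuous in $t$, with bounded Hilbert--Schmidt operators (since $\Lambda_T\in C([0,T];L^2(I\times I))$ and $G\in L^2$). It therefore has a unique solution in $C([0,T];L^2(I;\RR^d))$. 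Once $\bar X_T$ is fixed, each $X^u_T$ solves a standard linear SDE driven by $W^u$ with deterministic inhomogeneity. Its mean is consistent with $\bar X_T$, because averaging the SDE reproduces the ODE and linear ODE solutions are unique. Alternatively, one may invoke \cite[Theorem 2.6]{Crescenzo-Pham-2026}, treating the feedback terms as extra linear graphon interactions.

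\textbf{Step 2: admissibility of $\alpha_T$.} I expect this to be the main technical point. It requires a Borel map $a(u,t,W^u_{\cdot\wedge t},\zeta^u)$ representing $\alpha_T^u$, and square integrability. Since $\xi\in\kL^2_0$, we can write $\xi^u=f(u,\zeta^u)$ measurably. The strong solution $X^u_T$ is then a measurable functional of $(u,\xi^u,W^u)$ by the Yamada--Watanabe-type representation used in \cite{Crescenzo-Pham-2026}. Measurability in $u$ of the coefficients $\theta^*_T(t)(u)$ and $\int_I\wt\Theta^*_T(t)(u,v)\bar X_T^v(t)\,dv$ follows from Fubini. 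Square integrability follows from the bounds $\|\cR(P_T)^{-1}\|\le\|R^{-1}\|$, $\int_I \EE\sup_t|X^u_T|^2du<\infty$ (Gr\"onwall), $\|p_T(t)\|_{L^2}$ bounded, and $\|\wt\Theta^*_T(t)\|_{L^2(I\times I)}$ bounded.

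\textbf{Step 3: conclusion.} Plugging $\alpha_T$ into \eqref{eq:fundamental_relation} makes the remainder vanish, so $J_T(\xi,\alpha_T)=\mathbb V_T(0,\xi)$. Combined with the lower bound $J_T(\xi,\alpha)\ge\mathbb V_T(0,\xi)$, this yields optimality and $V_T(\xi)=\mathbb V_T(0,\xi)$.

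For uniqueness, let $\alpha$ be any optimal control. Then the remainder in \eqref{eq:fundamental_relation} vanishes, and since $\cR(P_T)\ge R>0$ we get $\alpha^u(t)=-\cR^{-1}\mathcal Z^u_T(t)$ a.e., where $\mathcal Z_T$ is built from the state $X^\alpha$. Consequently $X^\alpha$ solves the closed-loop equation \eqref{eq:optimal_state_finite}. By the uniqueness in Step 1, $X^\alpha=X_T$, and hence $\alpha=\alpha_T$ $dt\otimes du\otimes d\PP$-a.e.
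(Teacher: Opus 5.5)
Your proposal is correct and follows the paper's verification argument. The fundamental relation, together with $\cR(P_T(t))\ge R>0$, gives $J_T(\xi,\alpha)\ge\mathbb{V}_T(0,\xi)$, with equality exactly for the stated feedback control. Uniqueness of the closed-loop system then gives uniqueness of the optimal control. Your Steps 1 and 2 (well-posedness of the closed loop and admissibility of $\alpha_T$) just fill in details the paper dismisses as clear.
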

\begin{proof}
This is a direct consequence of Proposition \ref{p:solvability_system_finite} and the fundamental relation \eqref{eq:fundamental_relation}. First, it is clear that $\alpha_T \in \mathcal{A}[0, T]$. Then, by the fundamental relation \eqref{eq:fundamental_relation}, we have $J_T(\xi, \alpha) \geq \mathbb{V}_T(0, \xi)$ for all $\alpha \in \mathcal{A}[0, T]$. Moreover, $J_T(\xi, \alpha) = \mathbb{V}_T(0, \xi)$ if and only if $\alpha$ takes the form of $\alpha_T$ in \eqref{eq:optimal_control_finite}. The uniqueness of the solution to the closed-loop system \eqref{eq:optimal_state_finite} completes the proof.
\end{proof}


\section{Ergodic control problem}
\label{s:ergodic_control}

We now turn to the ergodic formulation of the linear-quadratic graphon mean field control problem. To proceed, we first define the admissible control set for the ergodic control problem. Given $\xi \in \kL^2_0$, let $\mathcal{A}^{\xi}[0, \infty)$ denote the set of $I$-indexed collections of $\RR^m$-valued stochastic processes $\alpha = (\alpha^u)_{u\in I}$ satisfying the following conditions:
\begin{itemize}
\item For every $T > 0$, $\alpha|_{[0, T]} \in \cA[0, T]$, where $\alpha|_{[0, T]}$ is the restriction of $\alpha$ to $[0, T]$;
\item The corresponding state process $X = (X^u)_{u \in I}$ in \eqref{eq:state_process} satisfies the transversality condition
$$\lim_{T \to \infty} \frac{1}{T} \mathbb{E}\Big[\int_{I}|X^u(T)|^2 du \Big] = 0,$$
and the long-run average integrability condition
$$\limsup_{T \to \infty} \frac{1}{T} \mathbb{E} \Big[\int_I \int_0^T(|X^u(t)|^2 + |\alpha^u(t)|^2) dt du \Big] < \infty.$$
\end{itemize}

Whenever no confusion can arise, we simply write $\mathcal{A}[0, \infty)$ for $\mathcal{A}^{\xi}[0, \infty)$. The corresponding cost functional is then defined as follows:

\textbf{Cost functional.} Given $\xi \in \mathfrak{L}^2_0$, the objective is to minimize the following cost functional over $\alpha \in \mathcal A[0, \infty)$:
\begin{equation}
\begin{aligned}
\label{eq:cost_functional_infinite_horizon}
J_{\infty}(\xi,\alpha) &:= \; \limsup_{T \to \infty} \frac{1}{T} \int_I \int_0^T \EE\big[ \langle X^u(s), Q X^u(s) \rangle + 2\langle \alpha^u(s), S X^u(s) \rangle + \langle \alpha^u(s), R \alpha^u(s) \rangle \\
&\hspace{1.1in} + \langle \bar{X}^u(s), \bar{Q} \bar{X}^u(s) \rangle + \langle \bar{X}^u(s), \wt{Q} [G\bar{X}(s)]^u\rangle \\
&\hspace{1.1in}  + \langle [G\bar{X}(s)]^u, \check{Q} [G\bar{X}(s)]^u\rangle \big] ds d u.
\end{aligned}
\end{equation}

\subsection{Homogeneous state equation and stabilizability condition}
\label{s:stabilizability_condition}

In what follows, we consider the homogeneous controlled state dynamics over the infinite horizon $[0, \infty)$ and introduce an appropriate stabilizability condition for the system. This condition plays a crucial role in establishing the solvability of the system of algebraic Riccati equations associated with the ergodic GMFC problem. 

Given $\xi \in \kL^2_0$, we consider a collection of homogeneous state processes $X = (X^u)_{u \in I}$, controlled by the collection $\alpha = (\alpha^u)_{u \in I} \in \sU^{\xi}[0, \infty)$, governed by
\begin{equation}
\label{eq:homo_state_process}
\begin{cases}
dX^u(t) = \big\{AX^u(t) + \bar{A} \bar X^u(t) + \wt{A} [G\bar{X}(t)]^u + B\alpha^u(t) \big\}dt 
\\
\hspace{0.7in} +\big\{CX^u(t) + \bar{C} \bar X^u(t) + \wt{C}[G\bar{X}(t)]^u + D \alpha^u(t) \big\}dW^u(t), \\
X^u(0) = \xi^u.
\end{cases}
\end{equation}
Here, $\sU^{\xi}[0, \infty)$ denotes the set of $I$-collections of $\RR^m$-valued stochastic processes $\alpha = (\alpha^u)_{u\in I}$ such that $\alpha|_{[0, T]} \in \cA[0, T]$ for all $T > 0$, and $\int_I\int_0^\infty \EE[|\alpha^u(t)|^2]dtd u <\infty$. When the dependence on $\xi$ is clear, we simply write $\sU[0, \infty)$. A standard contraction argument shows that, for any $\alpha \in \sU[0, \infty)$, the homogeneous system \eqref{eq:homo_state_process} admits a unique solution $X = (X^u)_{u \in I}$ on every finite time interval. Moreover, for each $T > 0$,
$$\int_I\EE \Big[\sup_{t \in [0, T]} |X^u(t)|^2 \Big]d u <\infty.$$

Next, we introduce a stabilizability condition for the homogeneous system \eqref{eq:homo_state_process}.

\begin{definition}
\label{d:L^2-stabilizability}
The homogeneous system \eqref{eq:homo_state_process} is graphon-$L^2$-stabilizable if there exists a tuple $(\Theta, \bar{\Theta}, \wt{\Theta}) \in \RR^{m \times d} \times \RR^{m \times d} \times L^2(I \times I; \RR^{m \times d})$, called the graphon-$L^2$-stabilizer of \eqref{eq:homo_state_process}, such that if $X = (X^u)_{u \in I}$ is the solution to the following homogeneous closed-loop system
\begin{equation}
\label{eq:closed_loop_homo_state_process}
\begin{cases}
dX^u(t) = \big\{(A+B \Theta) X^u(t) + (\bar{A} + B(\bar{\Theta} - \Theta)) \bar{X}^u(t) \\
\hspace{1in} + \int_I \big(\wt{A} G(u, v) + B \wt{\Theta}(u, v)\big) \bar{X}^v(t) dv \big\}dt 
\\
\hspace{0.7in} +\big\{(C+D \Theta) X^u(t) + (\bar{C} + D(\bar{\Theta} - \Theta)) \bar{X}^u(t) \\
\hspace{1in} + \int_I \big(\wt{C} G(u, v) + D \wt{\Theta}(u, v)\big) \bar{X}^v(t) dv \big\} dW^u(t), \\
X^u(0) = \xi^u
\end{cases}
\end{equation}
and
$$\alpha^u(t) = \Theta(X^u(t) - \bar{X}^u(t)) + \bar{\Theta} \bar{X}^u(t) + \int_I \wt{\Theta}(u, v) \bar{X}^v(t) dv$$
for $t \in [0, \infty)$, then
$$\EE \Big[ \int_0^{\infty} \int_I \big(|X^u(t)|^2 + |\alpha^u(t)|^2 \big) du dt \Big] < \infty.$$
\end{definition}

To guarantee that the homogeneous system \eqref{eq:homo_state_process} is graphon-$L^2$-stabilizable, we impose the following assumption.

\begin{assumption}
\label{a:stabilizability}
The controlled ODE 
\begin{equation}
\label{eq:homo_ode}
\begin{cases}
\frac{d}{dt} \bar{X}^u(t) = (A + \bar{A}) \bar{X}^u(t) + \wt{A}[G\bar{X}(t)]^u + B\bar{\alpha}^u(t), \\
\bar{X}^u(0) = \bar{\xi}^u
\end{cases}
\end{equation}
is graphon-$L^2$-stabilizable, i.e., there exist a matrix $\bar{\Theta} \in \RR^{m \times d}$ and $\wt{\Theta} \in L^2(I \times I; \RR^{m \times d})$ such that, for any collection of initial conditions $\bar{\xi} = (\bar{\xi}^u)_{u \in I}$ with $\bar{\xi}^u = \EE[\xi^u]$, where $\xi \in \kL_0^2$, the solution to
\begin{equation}
\label{eq:homo_ode_closed_form}
\begin{cases}
\frac{d}{dt} \bar{X}^u(t) = (A + \bar{A} + B \bar{\Theta}) \bar{X}^u(t) + \int_I \big(\wt{A} G(u, v) + B \wt{\Theta}(u, v) \big) \bar{X}^v(t) dv, \\
\bar{X}^u(0) = \bar{\xi}^u
\end{cases}
\end{equation}
satisfies
$$\int_0^{\infty} \int_I |\bar{X}^u(t)|^2 du dt < \infty.$$
In this case, $(\bar{\Theta}, \wt{\Theta})$ is called a stabilizer of the controlled ODE \eqref{eq:homo_ode}. Moreover, the controlled SDE, denoted by $[A, C; B, D]$,
\begin{equation}
\label{eq:homo_SDE}
\begin{cases}
dX^u(t) = (AX^u(t) + B\alpha^u(t)) dt 
+ (CX^u(t) + D \alpha^u(t)) dW^u(t), \\
X^u(0) = \xi^u
\end{cases}
\end{equation}
is also graphon-$L^2$-stabilizable, i.e., there exists a matrix $\Theta \in \RR^{m \times d}$ such that, for any collection of initial conditions $\xi = (\xi^u)_{u \in I} \in \kL_0^2$, 
the solution to the closed-loop system
\begin{equation}
\label{eq:homo_SDE_closed_form}
\begin{cases}
dX^u(t) = (A + B \Theta) X^u(t) dt 
+ (C + D \Theta) X^u(t) dW^u(t), \\
X^u(0) = \xi^u,
\end{cases}
\end{equation}
which is denoted by $[A+B\Theta, C+D\Theta]$, satisfies
$$\EE \Big[\int_I \int_0^{\infty}  |X^u(t)|^2 dt du \Big] < \infty.$$
In this case, we call $\Theta$ a stabilizer of the controlled SDE \eqref{eq:homo_SDE}.
\end{assumption}

The equation \eqref{eq:homo_ode_closed_form} is a standard linear ODE on the Hilbert space $L^2(I; \mathbb{R}^d)$. We define the corresponding linear operator as follows: for $\phi \in L^2(I; \mathbb{R}^d)$ and $u \in I$,
\begin{equation*}
\mathcal{L}_{\text{s}}(\phi)(u) = (A + \bar{A} + B \bar{\Theta}) \phi^u + \int_I \big(\wt{A} G(u, v) + B \wt{\Theta}(u, v)\big) \phi^v dv.
\end{equation*}
It is clear that $\mathcal{L}_{\text{s}}$ is a bounded linear operator on $L^2(I;\mathbb{R}^d)$. The corresponding adjoint operator $\mathcal{L}_{\text{s}}^*$ is given by 
$$\mathcal{L}_{\text{s}}^*(\phi)(u) = (A + \bar{A} + B \bar{\Theta})^\top \phi^u + \int_I \big(\wt{A}^\top G(v, u) + \wt{\Theta}(v, u)^\top B^\top \big) \phi^v dv.$$
By Assumption \ref{a:stabilizability} and the Datko--Pazy theorem (for example, see \cite[Theorem 1.8, Chapter V]{Engel-Nagel-2000}), the semigroup $\{e^{t\mathcal{L}_{\text{s}}}\}_{t \geq 0}$ is uniformly exponentially stable; that is, there exist positive constants $K$ and $\lambda$, depending only on the operator $\mathcal{L}_{\text{s}}$, such that
\begin{equation}
\label{eq:operator_s_uniform_stable}
\|e^{t\mathcal{L}_{\text{s}}}\|_{op} \leq K e^{-\lambda t}, \quad \forall t \geq 0,
\end{equation}
where the semigroup $\{e^{t\mathcal{L}_{\text{s}}}\}_{t \geq 0}$ is generated by $\mathcal{L}_{\text{s}}$ and is defined by
$$e^{t\mathcal{L}_{\text{s}}} = \sum_{k=0}^{\infty} \frac{t^k \mathcal{L}_{\text{s}}^k}{k!}.$$

\begin{remark}
\label{r:stabilizability}
In Assumption \ref{a:stabilizability}, we require that $\mathcal{L}_{\text{s}}$ be a strictly stable bounded operator on $L^2(I; \mathbb{R}^d)$. A sufficient condition is that there exist $\bar{\Theta}$ and $\wt{\Theta}$ such that \textnormal{(i)} the local matrix part $A + \bar{A} + B \bar{\Theta}$ is strongly stable; and \textnormal{(ii)} the integral operator part is sufficiently small in operator norm. More precisely, if we find $\bar{\Theta}$ and $\wt{\Theta}$ such that $A + \bar{A} + B \bar{\Theta} + (A + \bar{A} + B \bar{\Theta})^\top \leq - 2a \bm{I}_d$ for some $a > 0$, and $\|\wt{G}\|_{op} < a$, where $\wt{G}(u, v) := \wt{A} G(u, v) + B \wt{\Theta}(u, v)$, then $\mathcal{L}_{\text{s}}$ is exponentially stable.
\end{remark}

Next, we show that Assumption \ref{a:stabilizability} ensures the graphon-$L^2$-stabilizability of the homogeneous system
\eqref{eq:homo_state_process}. 

\begin{lemma}
\label{l:stabilizability_homo_SDE}
Suppose that Assumption \ref{a:stabilizability} holds. Then, the homogeneous system
\eqref{eq:homo_state_process} is graphon-$L^2$-stabilizable.  
\end{lemma}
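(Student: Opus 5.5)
Take $\Theta$ a stabilizer of the SDE \eqref{eq:homo_SDE} and $(\bar{\Theta},\wt{\Theta})$ a stabilizer of the ODE \eqref{eq:homo_ode}, both given by Assumption \ref{a:stabilizability}. We show that the triple $(\Theta,\bar{\Theta},\wt{\Theta})$ is a graphon-$L^2$-stabilizer of \eqref{eq:homo_state_process} in the sense of Definition \ref{d:L^2-stabilizability}. The argument splits the closed-loop state \eqref{eq:closed_loop_homo_state_process} into its mean and its fluctuation.

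First, take expectations in \eqref{eq:closed_loop_homo_state_process}. The $\Theta$-terms cancel, since $\Theta\bar X^u+(\bar\Theta-\Theta)\bar X^u=\bar\Theta\bar X^u$. Hence $\bar X=(\bar X^u)_{u\in I}$ solves exactly \eqref{eq:homo_ode_closed_form}, i.e. $\bar X(t)=e^{t\mathcal L_{\text s}}\bar\xi$. By \eqref{eq:operator_s_uniform_stable},
\[
\|\bar X(t)\|_{L^2(I)}^2\le K^2e^{-2\lambda t}\|\bar\xi\|_{L^2(I)}^2,
\]
so $\int_0^\infty\int_I|\bar X^u(t)|^2dudt<\infty$.

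Second, set $Y^u=X^u-\bar X^u$. Subtracting the mean equation, we get
\[
dY^u=(A+B\Theta)Y^u\,dt+\big\{(C+D\Theta)Y^u+h^u(t)\big\}dW^u,\qquad Y^u(0)=\xi^u-\bar\xi^u,
\]
where the deterministic forcing is
\[
h^u(t)=(C+\bar C+D\bar\Theta)\bar X^u(t)+\int_I\big(\wt C G(u,v)+D\wt\Theta(u,v)\big)\bar X^v(t)dv .
\]
Since $G,\wt\Theta\in L^2(I\times I)$, the Hilbert--Schmidt bound \eqref{eq:Hilbert_Schmidt_operator} gives $\|h(t)\|_{L^2(I)}\le c\|\bar X(t)\|_{L^2(I)}\le cKe^{-\lambda t}\|\bar\xi\|_{L^2(I)}$.

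Third, and this is the main step, control $Y$ using the stability of $[A+B\Theta,C+D\Theta]$. I would use the standard equivalence for the finite-dimensional system $[A+B\Theta,C+D\Theta]$: $L^2$-stability holds iff there is $\mathcal P\in\bS^d_{++}$ with
\[
\mathcal P(A+B\Theta)+(A+B\Theta)^\top\mathcal P+(C+D\Theta)^\top\mathcal P(C+D\Theta)=-\bm I_d .
\]
Note that Assumption \ref{a:stabilizability} applied to deterministic constant initial data reduces to the classical mean-square stability of this system. Apply Itô's formula to $\langle \mathcal P Y^u,Y^u\rangle$, take expectations and integrate over $I$. For any $\varepsilon>0$ the cross term $2\langle \mathcal P(C+D\Theta)Y^u,h^u\rangle$ is bounded by $\varepsilon|Y^u|^2+c_\varepsilon|h^u|^2$. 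This yields, for $y(t)=\int_I\EE\langle\mathcal PY^u,Y^u\rangle du$,
\[
\dot y(t)\le -c\,y(t)+c'\|h(t)\|_{L^2(I)}^2 .
\]
Grönwall's inequality then gives exponential decay of $y$, hence $\int_0^\infty\int_I\EE|Y^u|^2dudt<\infty$.

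Finally, $\EE|X^u|^2=\EE|Y^u|^2+|\bar X^u|^2$, so the state is square integrable over $[0,\infty)\times I$. The control is
\[
\alpha^u=\Theta Y^u+\bar\Theta\bar X^u+\int_I\wt\Theta(u,v)\bar X^vdv ,
\]
and each term is bounded in $L^2(I)$ by $\|Y\|$ or $\|\bar X\|$, again by \eqref{eq:Hilbert_Schmidt_operator}. So $\alpha$ is square integrable as well, which is the required estimate. The measurability and admissibility of $\alpha$ follow because $\alpha$ is in feedback form driven by a well-posed linear system. The delicate point is the Lyapunov characterization in the third step. If I prefer to avoid it, the alternative is a variation-of-constants formula with the fundamental matrix $\Phi$ of $[A+B\Theta,C+D\Theta]$, which satisfies $\EE|\Phi(t)|^2\le Ke^{-\lambda t}$. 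The convolution with the exponentially decaying $h$ is then handled by Young's inequality.
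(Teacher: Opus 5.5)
Your proposal is correct and follows essentially the same route as the paper: the mean/fluctuation split, a Lyapunov solution for $[A+B\Theta,C+D\Theta]$ (the paper takes it from \cite[Proposition 3.5]{Huang-Li-Yong-2015}), and It\^o's formula for $\langle \mathcal P Y^u,Y^u\rangle$ with the deterministic forcing controlled through $\|\bar X(t)\|_{L^2}$. The differences are cosmetic. The paper integrates the It\^o identity directly, using only $\int_0^\infty\int_I|\bar X^u|^2\,du\,dt<\infty$ instead of exponential decay plus Gr\"onwall. Also, the cross term you handle with Young's inequality has zero expectation, since $h^u$ is deterministic and $\EE[Y^u(t)]=0$.
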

\begin{proof}
First, for any $\alpha = (\alpha^u)_{u \in I} \in \sU[0, \infty)$, the homogeneous system \eqref{eq:homo_state_process} admits a unique solution. Taking expectations in \eqref{eq:homo_state_process}, we obtain the controlled ODE \eqref{eq:homo_ode}. By Assumption \ref{a:stabilizability}, it is graphon-$L^2$-stabilizable. Thus, there exist a matrix $\bar{\Theta} \in \RR^{m \times d}$ and $\wt{\Theta} \in L^2(I \times I; \RR^{m \times d})$ such that the solution to \eqref{eq:homo_ode_closed_form} satisfies
$\int_0^{\infty} \int_I |\bar{X}^u(t)|^2 du dt < \infty$. Using Assumption \ref{a:stabilizability} again, since the controlled system \eqref{eq:homo_SDE} is also graphon-$L^2$-stabilizable, there exists a matrix $\Theta \in \RR^{m \times d}$ such that the solution to \eqref{eq:homo_SDE_closed_form} satisfies
$\EE [\int_I \int_0^{\infty}  |X^u(t)|^2 dt du] < \infty$. Hence, by \cite[Proposition 3.5]{Huang-Li-Yong-2015}, for $\bm{I}_d \in \bS^d_{++}$ denoting the $d$-dimensional identity matrix, the following Lyapunov equation 
$$P(A + B\Theta) + (A+ B\Theta)^\top P + (C+ D\Theta)^\top P (C+D\Theta) + \bm{I}_d = 0$$
admits a unique solution $P \in \bS^d_{++}$. For each $u \in I$, let the control $\alpha^u$ take the form
$$\alpha^u(t) = \Theta(X^u(t) - \bar{X}^u(t)) + \bar{\Theta} \bar{X}^u(t) + \int_I \wt{\Theta}(u, v) \bar{X}^v(t) dv.$$
Next, we show that the corresponding closed-loop system is graphon-$L^2$-stabilizable. Set
$\check{X}^u(t) = X^u(t) - \bar{X}^u(t)$ for $t \geq 0$. Then
\begin{equation*}
\begin{aligned}
d\check{X}^u(t) &= (A + B\Theta) \check{X}^u(t) dt + \Big\{(C+D\Theta) \check{X}^u(t) + (C+ \bar{C} + D \bar{\Theta}) \bar{X}^u(t) \\
& \hspace{0.3in} + \int_I \big(\wt{C} G(u, v) + D \wt{\Theta}(u, v)\big) \bar{X}^v(t) dv \Big\} d W^u(t)
\end{aligned}
\end{equation*}
with $\check{X}^u(0) = \xi^u - \bar{\xi}^u$. Applying It\^o's formula to $\lan P \check{X}^u(t), \check{X}^u(t) \ran$, we derive that
\begin{equation*}
\begin{aligned}
& \frac{d}{dt} \EE \big[\lan P \check{X}^u(t), \check{X}^u(t) \ran \big] \\
= \ & \EE \big[\lan \{P(A + B\Theta) + (A+ B\Theta)^\top P + (C+ D\Theta)^\top P (C+D\Theta)\} \check{X}^u(t), \check{X}^u(t) \ran \big] \\
& \hspace{0.2in} + \Big\lan P \Big\{(C+ \bar{C} + D \bar{\Theta}) \bar{X}^u(t) + \int_I \big(\wt{C} G(u, v) + D \wt{\Theta}(u,v)\big) \bar{X}^v(t) dv \Big\}, \\
& \hspace{0.7in} (C+ \bar{C} + D \bar{\Theta}) \bar{X}^u(t) + \int_I \big(\wt{C} G(u, v) + D \wt{\Theta}(u, v)\big) \bar{X}^v(t) dv \Big\ran.
\end{aligned}
\end{equation*}
Since $P \in \bS^d_{++}$ is the solution to the preceding Lyapunov equation, $G \in L^2(I \times I; \RR)$, and $\wt{\Theta} \in L^2(I \times I; \RR^{m \times d})$, integrating from $0$ to $t$ gives
\begin{equation*}
\begin{aligned}
& \EE \Big[ \int_0^t \int_I |\check{X}^u(s)|^2 du ds \Big] \\
= \ & \EE \Big[ \int_I \lan P \check{X}^u(0), \check{X}^u(0) \ran du \Big] - \EE \Big[ \int_I \lan P \check{X}^u(t), \check{X}^u(t) \ran du \Big] \\
& \hspace{0.2in} + \int_0^t \int_I \Big\lan P \Big\{(C+ \bar{C} + D \bar{\Theta}) \bar{X}^u(s) + \int_I (\wt{C} G(u, v) + D \wt{\Theta}(u, v)) \bar{X}^v(s) dv \Big\}, \\
& \hspace{1in} (C+ \bar{C} + D \bar{\Theta}) \bar{X}^u(s) + \int_I (\wt{C} G(u, v) + D \wt{\Theta}(u, v)) \bar{X}^v(s) dv \Big\ran du ds \\
\leq \ & K \EE \Big[ \int_I |\check{X}^u(0)|^2 du \Big] + K \int_0^t \int_I |\bar{X}^u(s)|^2 du ds
\end{aligned}
\end{equation*}
for some constant $K > 0$. Since
$\EE [ \int_I |\xi^u|^2 du] < \infty$ and $\int_0^{\infty} \int_I |\bar{X}^u(s)|^2 du ds < \infty$, letting $t \to \infty$, we have
\begin{equation*}
\EE \Big[ \int_0^{\infty} \int_I |\check{X}^u(t)|^2 du dt \Big] \leq K \EE \Big[ \int_I |\xi^u|^2 du \Big] + K \int_0^{\infty} \int_I |\bar{X}^u(s)|^2 du ds < \infty
\end{equation*}
for a possibly different constant $K > 0$. Combining this estimate with $\int_0^{\infty} \int_I |\bar{X}^u(s)|^2 du ds < \infty$, we conclude that the homogeneous system \eqref{eq:homo_state_process} is graphon-$L^2$-stabilizable.
\end{proof}

\subsection{System of algebraic Riccati equations}
\label{s:system_algebraic_equations}

In this section, we derive the system of algebraic equations associated with the ergodic control problem. As in the finite-horizon setting, we consider the following quadratic ansatz:
\begin{equation}
\label{eq:value_function_ansatz_ergodic}
\mathbb{V}(\xi) = \int_I \EE[\langle \xi^u, P \xi^u \rangle] du + \int_I \langle \bar{\xi}^u, \Pi \bar{\xi}^u \rangle du + \int_I \int_I \langle \bar{\xi}^u, \Lambda(u, v) \bar{\xi}^v \rangle dv du + 2 \int_I \langle \bar{\xi}^u, p(u)\rangle du,
\end{equation}
where $P, \Pi \in \bS^d$, $\Lambda \in L^2(I \times I; \RR^{d \times d})$, and $p \in L^2(I; \RR^d)$ are to be determined. Then, we introduce
\begin{equation*}
\begin{aligned}
\mathcal V(t) &:= \int_I \EE[\langle X^u(t), P X^u(t) \rangle] du + \int_I \langle \bar{X}^u(t), \Pi \bar{X}^u(t) \rangle du \\
& \hspace{0.3in} + \int_I \int_I \langle \bar{X}^u(t), \Lambda(u, v) \bar{X}^v(t) \rangle dv du + 2 \int_I \langle \bar{X}^u(t), p(u)\rangle du,
\end{aligned}
\end{equation*}
and let
\begin{equation*}
\begin{aligned}
\mathcal Y(t) &:= \mathcal V(t) + \int_0^t \int_I \mathbb E\big[\langle X^u(s), Q X^u(s) \rangle + 2\langle \alpha^u(s), S X^u(s) \rangle + \langle \alpha^u(s), R \alpha^u(s) \rangle \\
&\hspace{0.9in} + \langle \bar{X}^u(s), \bar{Q} \bar{X}^u(s) \rangle + \langle \bar{X}^u(s), \wt{Q} [G\bar{X}(s)]^u\rangle \\
&\hspace{0.9in} + \langle [G\bar{X}(s)]^u, \check{Q} [G\bar{X}(s)]^u\rangle \big] du ds.
\end{aligned}
\end{equation*}
To proceed, for given $P, \Pi \in \bS^d$, we define the mapping $F^{\infty}: L^2(I \times I; \RR^{d \times d}) \to L^2(I \times I; \RR^{d \times d})$ by
\begin{equation*}
\begin{aligned}
F^{\infty}(\Lambda)(u, v) &:= \Psi_1(P, \Pi, \Lambda)(u, v) - \wh{\cM}(P, \Pi)^\top \cR(P)^{-1} \Upsilon(P, \Lambda)(u, v) \\
& \hspace{0.5in} - \Upsilon(P, \Lambda)(v, u)^\top \cR(P)^{-1} \wh{\cM}(P, \Pi) \\
& \hspace{0.5in} - \int_I \Upsilon(P, \Lambda)(w, u)^\top \cR(P)^{-1} \Upsilon(P, \Lambda)(w, v) dw.
\end{aligned}
\end{equation*}
Moreover, given $P, \Pi \in \bS^d$ and $\Lambda \in L^2(I \times I; \RR^{d \times d})$, we define the mapping $\wt{F}^{\infty}: L^2(I; \RR^d) \to L^2(I; \RR^d)$ by
\begin{equation*}
\begin{aligned}
\wt{F}^{\infty}(p)(u) &:= \Psi_2(P, \Pi, \Lambda, p)(u) - \wh{\cM}(P, \Pi)^\top \cR(P)^{-1} \Gamma(P, p)(u) \\
& \hspace{0.5in} - \int_I \Upsilon(P, \Lambda)(v, u)^\top \cR(P)^{-1} \Gamma(P, p)(v) dv.
\end{aligned}
\end{equation*}
Using an argument similar to that in Section \ref{sec:System_Riccati_equations_finite}, we obtain
\begin{equation*}
\begin{aligned}
d \mathcal Y(t) & = \int_I \EE\big[\lan (\cQ(P) - \cM(P)^\top \cR(P)^{-1} \cM(P)) X^u(t), X^u(t) \ran \big] du dt \\
&\hspace{0.3in} + \int_I \big\lan \big\{\wh{\cQ}(P, \Pi) - \wh{\cM}(P, \Pi)^\top \cR(P)^{-1} \wh{\cM}(P, \Pi) - \cQ(P) \\
&\hspace{0.8in} + \cM(P)^\top \cR(P)^{-1} \cM(P) \big\} \bar{X}^u(t), \bar{X}^u(t) \big\ran du dt \\
&\hspace{0.3in} + \int_I\int_I \lan F^{\infty}(\Lambda)(u, v) \bar{X}^v(t), \bar{X}^u(t) \ran dv du dt  + 2 \int_I \lan \wt{F}^{\infty}(p)(u), \bar{X}^u(t) \ran du dt \\
&\hspace{0.3in} + \int_I \big(- \langle \Gamma(P, p)(u), \cR(P)^{-1} \Gamma(P, p)(u) \rangle + \langle P \sigma, \sigma \rangle + 2 \langle p(u), b \rangle \big) du dt \\
&\hspace{0.3in} + \int_I \EE \big[\lan \cR(P) \{\alpha^u(t) + \cR(P)^{-1} \mathcal Z^u(t)\}, \alpha^u(t) + \cR(P)^{-1} \mathcal Z^u(t) \ran \big] du dt,
\end{aligned}
\end{equation*}
where
\begin{equation*}
\mathcal Z^u(t) := \cM(P) X^u(t) + \bar{\cM}(P, \Pi) \bar{X}^u(t) + D^\top P \sigma + B^\top p(u) + \int_I \Upsilon(P, \Lambda)(u, v) \bar{X}^v(t) dv.
\end{equation*}
Accordingly, we consider the following system of algebraic equations associated with the ergodic GMFC problem. We start with the system of standard algebraic Riccati equations satisfied by $P$ and $\bar{\Pi} = P + \Pi$:
\begin{equation}
\label{eq:Riccati_P_Pi_ergodic}
\begin{cases}
\cQ(P) - \cM(P)^\top \cR(P)^{-1} \cM(P) = 0, \\
\wh{\cQ}(P, \Pi) - \wh{\cM}(P, \Pi)^\top \cR(P)^{-1} \wh{\cM}(P, \Pi) = 0.
\end{cases}
\end{equation}
Next, given a solution $(P, \bar{\Pi}) \in \bS^d_{++} \times \bS^d_{++}$ to \eqref{eq:Riccati_P_Pi_ergodic}, we introduce the following abstract algebraic Riccati equation on the Hilbert space $L^2(I \times I; \RR^{d \times d})$:
\begin{equation}
\label{eq:Riccati_Lambda_ergodic}
F^{\infty}(\Lambda)(u, v) = 0
\end{equation}
for a.e. $u, v \in I$. Then, given a solution $(P, \bar{\Pi}) \in \bS^d_{++} \times \bS^d_{++}$ to \eqref{eq:Riccati_P_Pi_ergodic} and a solution $\Lambda \in L^2(I \times I; \RR^{d \times d})$ to \eqref{eq:Riccati_Lambda_ergodic}, we introduce the linear equation on $L^2(I; \RR^{d})$: 
\begin{equation}
\label{eq:Riccati_p_ergodic}
\wt{F}^{\infty}(p)(u) = 0
\end{equation}
for a.e. $u \in I$.

\subsection{Solvability of the ergodic control problem}
\label{s:main_result_ergodic_control}

Under Assumptions \ref{a:coefficients_solvability_finite} and \ref{a:stabilizability}, it follows from \cite[Lemma 2.2]{Sun-Yong-2024} that the system of standard algebraic Riccati equations \eqref{eq:Riccati_P_Pi_ergodic} admits a unique solution pair $(P, \bar{\Pi}) \in \mathbb S^{d}_{++} \times \mathbb S^d_{++}$. In addition, for all $t \in [0, T]$, one has $P_T(t) \leq P$ and $\bar{\Pi}_T(t) \leq \bar{\Pi}$,  where $P_T$ and $\bar{\Pi}_T$ are the solutions to the differential Riccati equations \eqref{eq:ODE_P_T} and \eqref{eq:ODE_Pi_T}, respectively. 
For simplicity of presentation, we first assume that the equations \eqref{eq:Riccati_Lambda_ergodic} and \eqref{eq:Riccati_p_ergodic} admit unique solutions. A sufficient condition ensuring the unique solvability of the system \eqref{eq:Riccati_Lambda_ergodic}-\eqref{eq:Riccati_p_ergodic} will be provided in Section \ref{s:solvability_algebraic}.

\begin{assumption}
\label{a:solvability_Lambda_p}
The system of algebraic equations \eqref{eq:Riccati_Lambda_ergodic}-\eqref{eq:Riccati_p_ergodic} admits a unique solution with $\Lambda \in L^2(I \times I; \RR^{d \times d})$ and $p \in L^2(I; \RR^{d})$ such that, for all $\phi \in L^2(I; \mathbb{R}^d)$,
$$\lan \bm{\Lambda}(\phi), \phi \ran_{L^2} = \int_I \int_I \lan \phi^u, \Lambda(u, v) \phi^v \ran dv du \geq 0.$$
Moreover, for each fixed $t \geq 0$, the solution $\Lambda_T(t)$ of \eqref{eq:ODE_Lambda_T} converges to the solution $\Lambda$ of \eqref{eq:Riccati_Lambda_ergodic} in $L^2(I \times I; \mathbb{R}^{d \times d})$ as $T \to \infty$.
\end{assumption}

Note that, if $(P, \bar{\Pi})$, $\Lambda$, and $p$ are solutions to the equations \eqref{eq:Riccati_P_Pi_ergodic}, \eqref{eq:Riccati_Lambda_ergodic}, and \eqref{eq:Riccati_p_ergodic}, respectively, then $d \mathcal{Y}(t)$ can be simplified as follows:
\begin{equation}
\label{eq:differential_ergodic}
\begin{aligned}
d \mathcal Y(t) & = \int_I \big(- \langle \Gamma(P, p)(u), \cR(P)^{-1} \Gamma(P, p)(u) \rangle + \langle P \sigma, \sigma \rangle + 2 \langle p(u), b \rangle \big) du dt \\
&\hspace{0.3in} + \int_I \EE \big[\lan \cR(P) \{\alpha^u(t) + \cR(P)^{-1} \mathcal Z^u(t)\}, \alpha^u(t) + \cR(P)^{-1} \mathcal Z^u(t) \ran \big] du dt.
\end{aligned}
\end{equation}
The identity \eqref{eq:differential_ergodic} yields the fundamental relation for the ergodic control problem.

\begin{lemma}
\label{l:fundamental_relation_ergodic}
Suppose that Assumptions \ref{a:coefficients_solvability_finite}, \ref{a:stabilizability}, and \ref{a:solvability_Lambda_p} hold. Let $(P, \bar{\Pi})$, $\Lambda$, and $p$ be the unique solutions to the system of equations \eqref{eq:Riccati_P_Pi_ergodic}, \eqref{eq:Riccati_Lambda_ergodic}, and \eqref{eq:Riccati_p_ergodic}. Then, for all $\xi \in \kL^2_0$ and $\alpha \in \mathcal A[0, \infty)$,
\begin{equation}
\label{eq:fundamental_relation_ergodic}
\begin{aligned}
J_{\infty}(\xi, \alpha) &= \limsup_{T \to \infty} \frac{1}{T} \int_0^T \int_I \EE \big[\lan \cR(P) \{\alpha^u(t) + \cR(P)^{-1} \mathcal Z^u(t)\}, \alpha^u(t) + \cR(P)^{-1} \mathcal Z^u(t) \ran \big] du dt \\
&\hspace{0.3in} + \int_I \big(- \langle \Gamma(P, p)(u), \cR(P)^{-1} \Gamma(P, p)(u) \rangle + \langle P \sigma, \sigma \rangle + 2 \langle p(u), b \rangle \big) du.
\end{aligned}
\end{equation}
\end{lemma}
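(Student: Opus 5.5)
The plan is to mimic the proof of Proposition \ref{p:fundamental_relation}, now on $[0,T]$ with time-independent coefficients, and then divide by $T$ and pass to the $\limsup$.

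Fix $\xi \in \kL^2_0$ and $\alpha \in \mathcal A[0,\infty)$, and let $X$ be the corresponding solution of \eqref{eq:state_process}. Since $\alpha|_{[0,T]} \in \cA[0,T]$, the It\^o computation leading to \eqref{eq:differential_ergodic} is legitimate on every $[0,T]$: apply It\^o's formula for a.e. $u$, integrate over $I$, and use that $(P,\bar\Pi)$, $\Lambda$, $p$ solve \eqref{eq:Riccati_P_Pi_ergodic}, \eqref{eq:Riccati_Lambda_ergodic} and \eqref{eq:Riccati_p_ergodic}. Integrating \eqref{eq:differential_ergodic} from $0$ to $T$ gives
\begin{equation*}
\mathcal V(T) - \mathcal V(0) + \int_0^T\!\!\int_I \EE[\ell^u(s)]\,du\,ds = T c_0 + \int_0^T\!\!\int_I \EE\big[\lan \cR(P)\eta^u, \eta^u\ran\big] du\,dt .
\end{equation*}
Here $\ell^u$ is the running cost in \eqref{eq:cost_functional_infinite_horizon}, $\eta^u = \alpha^u + \cR(P)^{-1}\mathcal Z^u$, and $c_0$ is the constant term on the right of \eqref{eq:fundamental_relation_ergodic}. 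The constant $c_0$ is finite because $p \in L^2(I;\RR^d)$.

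Next I divide by $T$. The term $\mathcal V(0)/T = \mathbb V(\xi)/T$ tends to $0$, since $\mathbb V(\xi)$ is finite: use $\|\bm\Lambda\|_{op} \le \|\Lambda\|_{L^2(I\times I)}$, Cauchy--Schwarz for the $p$-term, and $\bar\xi \in L^2(I;\RR^d)$.

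The main obstacle is showing $\mathcal V(T)/T \to 0$. The quadratic parts are bounded by $K\int_I \EE|X^u(T)|^2du$, using $|\bar X^u(T)|^2 \le \EE|X^u(T)|^2$ (Jensen) and the operator bound on $\bm\Lambda$. The linear part is bounded by $2\|p\|_{L^2}\big(\int_I \EE|X^u(T)|^2du\big)^{1/2}$. Both are $o(T)$ by the transversality condition in the definition of $\mathcal A[0,\infty)$; for the linear part, $\sqrt{o(T)\,T}/T \to 0$. Only boundedness of $\bm\Lambda$ is needed here, so the nonnegativity in Assumption \ref{a:solvability_Lambda_p} is not used at this step. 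The long-run integrability condition ensures that both sides are finite on every $[0,T]$, so the rearrangement is valid.

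Finally, write
\begin{equation*}
\frac1T\int_0^T\!\!\int_I \EE[\ell^u]\,du\,ds = c_0 + \frac1T\int_0^T\!\!\int_I \EE\lan\cR(P)\eta^u,\eta^u\ran\,du\,dt + \frac{\mathcal V(0) - \mathcal V(T)}{T}.
\end{equation*}
The last term tends to $0$, so taking $\limsup_{T\to\infty}$ gives \eqref{eq:fundamental_relation_ergodic}, because $\limsup(a_T + b_T) = \limsup a_T + \lim b_T$ whenever $b_T$ converges.
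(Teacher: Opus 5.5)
Your proposal is correct and follows the same route as the paper: integrate \eqref{eq:differential_ergodic} over $[0,T]$, divide by $T$, and use the transversality condition together with the growth of $\mathcal V$ to remove the boundary terms before taking the $\limsup$. Your version is more careful than the paper's in two places: you show $\mathcal V(T)/T \to 0$ as a genuine limit, which is what is actually needed to split the $\limsup$, and you treat the linear $p$-term separately via Cauchy--Schwarz. The only quibble is that finiteness on each $[0,T]$ comes from $\alpha|_{[0,T]}\in\cA[0,T]$, not from the long-run integrability condition.
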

\begin{proof}
The desired result follows directly from the differential identity \eqref{eq:differential_ergodic} and the argument used in Proposition \ref{p:fundamental_relation}. More precisely, integrating \eqref{eq:differential_ergodic} over $[0, T]$, dividing both sides by $T$, and then taking the limit superior as $T \to \infty$, we obtain
\begin{equation*}
\begin{aligned}
& \limsup_{T \to \infty} \frac{1}{T} \int_0^T \int_I \EE \big[\lan \cR(P) \{\alpha^u(t) + \cR(P)^{-1} \mathcal Z^u(t)\}, \alpha^u(t) + \cR(P)^{-1} \mathcal Z^u(t) \ran \big] du dt \\
&\hspace{0.3in} + \int_I \big(- \langle \Gamma(P, p)(u), \cR(P)^{-1} \Gamma(P, p)(u) \rangle + \langle P \sigma, \sigma \rangle + 2 \langle p(u), b \rangle \big) du \\
= \ & \limsup_{T \to \infty} \frac{1}{T}(\cY(T) - \cY(0)) = J_{\infty}(\xi, \alpha).
\end{aligned}
\end{equation*}
Here, we have used the definition of $J_{\infty}(\xi, \alpha)$ in \eqref{eq:cost_functional_infinite_horizon}, together with the fact that
$$\limsup_{T \to \infty}\frac{1}{T}(\mathcal{V}(T) - \mathcal{V}(0)) = 0.$$
This follows from the transversality condition in the definition of $\cA[0,\infty)$, and the quadratic growth of $\mathcal V(T)$ with respect to the state variable.
\end{proof}

We now establish the solvability of the ergodic GMFC problem with the state dynamics \eqref{eq:state_process} and the cost functional \eqref{eq:cost_functional_infinite_horizon}. For simplicity of notation, we define
\begin{equation}
\label{eq:Theta_notation}
\begin{aligned}
& \Theta^* := - \cR(P)^{-1} \cM(P), \\
& \bar{\Theta}^* := - \cR(P)^{-1} \wh{\cM}(P, \Pi), \\
& \theta^*(u) := - \cR(P)^{-1} \Gamma(P, p)(u), \\
& \wt{\Theta}^*(u, v) := - \cR(P)^{-1}
\Upsilon(P, \Lambda)(u, v)
\end{aligned}
\end{equation}
for a.e. $u, v \in I$. Moreover, we introduce the following feedback control 
\begin{equation}
\label{eq:optimal_control_ergodic}
\alpha_{\infty}^u(t) = \Theta^* \big(X_{\infty}^u(t) - \bar{X}_{\infty}^u(t) \big) + \bar{\Theta}^* \bar{X}_{\infty}^u(t) + \theta^*(u) + \int_I \wt{\Theta}^*(u, v) \bar{X}_{\infty}^v(t) dv
\end{equation}
for all $t \in [0, \infty)$, where $X_{\infty} = (X_{\infty}^u)_{u \in I}$ is the solution to the corresponding closed-loop equation 
\begin{equation}
\label{eq:optimal_state_ergodic}
\begin{cases}
dX_{\infty}^u(t) = \big\{(A+B\Theta^*) X_{\infty}^u(t) + [\bar{A} + B (\bar{\Theta}^* - \Theta^*)] \bar{X}_{\infty}^u(t) + B\theta^*(u) + b \\
\hspace{1in} + \int_I (\wt{A}G(u, v) + B \wt{\Theta}^*(u, v)) \bar{X}_{\infty}^v(t) dv\big\}dt 
\\
\hspace{0.7in} +\big\{(C+D\Theta^*) X_{\infty}^u(t) + [\bar{C} + D (\bar{\Theta}^* - \Theta^*)] \bar{X}_{\infty}^u(t) + D \theta^*(u) + \sigma \\
\hspace{1in} + \int_I (\wt{C}G(u, v) + D \wt{\Theta}^*(u, v)) \bar{X}_{\infty}^v(t) dv \big\}dW^u(t), \quad t \geq 0, \\
X_{\infty}^u(0) = \xi^u, \quad u \in I,
\end{cases}
\end{equation}
The pair $(X_{\infty}, \alpha_{\infty})$ will be shown to be optimal for the ergodic GMFC problem \eqref{eq:cost_functional_infinite_horizon}.

We begin by establishing the following result, which identifies stabilizers for the controlled ODE \eqref{eq:homo_ode} and the controlled SDE \eqref{eq:homo_SDE}.

\begin{lemma}
\label{l:stabilizer}
Assume that Assumptions \ref{a:coefficients_solvability_finite}, \ref{a:stabilizability}, and \ref{a:solvability_Lambda_p} hold. Let $\Theta^*, \bar{\Theta}^*$, and $\wt{\Theta}^*$ be defined in \eqref{eq:Theta_notation}. Then, $\Theta^*$ is a stabilizer of the controlled SDE \eqref{eq:homo_SDE}, and $(\bar{\Theta}^*, \wt{\Theta}^*)$ is a stabilizer of the controlled ODE \eqref{eq:homo_ode}.
\end{lemma}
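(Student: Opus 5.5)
The plan is to treat the two claims separately, and in each case build a quadratic Lyapunov functional from the algebraic Riccati equations. For the SDE we use $P$ alone. For the ODE we use the operator $\bm{\mathcal P} := \bar{\Pi}\,\mathrm{Id} + \bm{\Lambda}$ acting on $L^2(I;\RR^d)$. In both cases we show that the functional decays at a rate proportional to $\|\cdot\|^2$, which gives exponential decay and hence the required $L^2(0,\infty)$ integrability.

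\emph{Stabilizer of the SDE.} Substituting $\Theta^* = -\cR(P)^{-1}\cM(P)$ into the first equation of \eqref{eq:Riccati_P_Pi_ergodic} and completing the square, the equation can be rewritten as
\begin{equation*}
P(A+B\Theta^*) + (A+B\Theta^*)^\top P + (C+D\Theta^*)^\top P (C+D\Theta^*) + Q + \Theta^{*\top} S + S^\top \Theta^* + \Theta^{*\top} R \Theta^* = 0 .
\end{equation*}
By the identity in Remark \ref{r:cost_function_finite}(i), the last four terms are bounded below by $Q - S^\top R^{-1} S \geq \varepsilon \bm{I}_d$ for some $\varepsilon>0$, using Assumption \ref{a:coefficients_solvability_finite}. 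Apply It\^o's formula to $\langle P X^u(t), X^u(t)\rangle$ for the closed-loop system $[A+B\Theta^*, C+D\Theta^*]$ and integrate over $I$. Since $P\in\bS^d_{++}$, this gives
\begin{equation*}
\frac{d}{dt}\int_I \EE\langle P X^u(t), X^u(t)\rangle\, du \leq -\frac{\varepsilon}{\|P\|}\int_I \EE\langle P X^u(t), X^u(t)\rangle\, du .
\end{equation*}
This yields exponential decay and $\EE\int_I\int_0^\infty |X^u|^2\,dt\,du<\infty$.

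\emph{Stabilizer of the ODE.} Write $\mathcal{L}^*$ for the operator $\mathcal{L}_{\text{s}}$ with $(\bar\Theta,\wt\Theta)$ replaced by $(\bar\Theta^*,\wt\Theta^*)$. Also introduce the bounded operator $\bm{\mathcal C}^*\phi(u) := (C+\bar C+D\bar\Theta^*)\phi^u + \int_I(\wt C G(u,v) + D\wt\Theta^*(u,v))\phi^v dv$, and the analogous control feedback operator $\bm{\Theta}^*_{\mathrm{op}}\phi(u) := \bar\Theta^*\phi^u + \int_I \wt\Theta^*(u,v)\phi^v dv$. The key step is the algebraic identity that the second equation of \eqref{eq:Riccati_P_Pi_ergodic} together with $F^\infty(\Lambda)=0$ is equivalent to the operator Lyapunov equation
\begin{equation*}
\bm{\mathcal P}\mathcal{L}^* + (\mathcal{L}^*)^{\ast}\bm{\mathcal P} + (\bm{\mathcal C}^*)^{\ast} P \bm{\mathcal C}^* + \bm{\mathcal Q}_{\Theta^*} = 0 \quad \text{on } L^2(I;\RR^d).
\end{equation*}
Here $\langle \bm{\mathcal Q}_{\Theta^*}\phi,\phi\rangle_{L^2}$ equals the running cost in \eqref{eq:cost_functional_finite_horizon}, evaluated at the deterministic state $\phi$ and control $\bm{\Theta}^*_{\mathrm{op}}\phi$. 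Rather than expanding every term, I would obtain this identity from the computation of $d\mathcal Y$ already carried out. Take $b=\sigma=0$, so that $p=0$ by the uniqueness in Assumption \ref{a:solvability_Lambda_p}. Restrict to mean dynamics, i.e.\ deterministic initial data and the feedback $\alpha = \bm{\Theta}^*_{\mathrm{op}}\bar X$ acting only on the mean part. Then the $\bar X$-quadratic part of $d\mathcal Y$, read off as an identity in $\phi=\bar X(t)$, is exactly the displayed equation: the Riccati terms vanish and the completed square vanishes because the control equals $-\cR(P)^{-1}\mathcal Z$ on the mean level. Matching this bookkeeping of the $(u,v)$ kernels carefully, including the symmetry \eqref{eq:Lambda_T_symmetric} of $\Lambda$, is the main obstacle.

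Once the identity is in hand, positivity does the rest. First, $(\bm{\mathcal C}^*)^{\ast} P\bm{\mathcal C}^*\geq 0$. Second, complete the square in the control as in Remark \ref{r:cost_function_finite}(i), now with the mean cost. The local part is then bounded below by $Q+\bar Q - S^\top R^{-1}S \geq \varepsilon\bm{I}_d$. The $\wt Q G$ term is nonnegative by Assumption \ref{a:coefficients_solvability_finite}, and the $\check Q$ term is nonnegative because $\check Q\geq 0$. Hence $\bm{\mathcal Q}_{\Theta^*}\geq \varepsilon\,\mathrm{Id}$. Moreover, $\bar\Pi\in\bS^d_{++}$ and $\bm\Lambda\geq 0$ by Assumption \ref{a:solvability_Lambda_p}, so $\lambda_{\min}(\bar\Pi)\,\mathrm{Id}\leq\bm{\mathcal P}\leq (\|\bar\Pi\|+\|\bm\Lambda\|_{op})\,\mathrm{Id}$. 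Along the solution of \eqref{eq:homo_ode_closed_form} with $(\bar\Theta^*,\wt\Theta^*)$, this gives
\begin{equation*}
\frac{d}{dt}\langle\bm{\mathcal P}\bar X(t),\bar X(t)\rangle_{L^2}\leq -\varepsilon\|\bar X(t)\|^2_{L^2}\leq -c\,\langle\bm{\mathcal P}\bar X(t),\bar X(t)\rangle_{L^2}
\end{equation*}
for some $c>0$. Consequently $\|\bar X(t)\|_{L^2}^2\leq K e^{-ct}\|\bar\xi\|_{L^2}^2$, and in particular $\int_0^\infty\int_I|\bar X^u|^2\,du\,dt<\infty$, so $(\bar\Theta^*,\wt\Theta^*)$ is a stabilizer of \eqref{eq:homo_ode}.
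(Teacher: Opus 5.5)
Your proposal is correct, but it argues differently from the paper. For the SDE claim the paper simply invokes \cite[Theorem 5.1]{Huang-Li-Yong-2015}. You instead give a self-contained Lyapunov argument based on the closed-loop form of the first equation in \eqref{eq:Riccati_P_Pi_ergodic} and on $Q-S^\top R^{-1}S>0$.

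For the ODE claim the paper never isolates a mean-level identity. With $b=\sigma=0$ (so $p=0$), it feeds the \emph{full} feedback $\Theta^*(X^u-\bar X^u)+\bar\Theta^*\bar X^u+\int_I\wt{\Theta}^*(u,v)\bar X^v\,dv$ into the homogeneous stochastic system. Then in $d\mathcal Y$ the Riccati terms and the entire completed square vanish at once. Integrating over $[0,T]$ gives $\mathcal V(0)-\mathcal V(T)\ge c_0\int_0^T\int_I\EE|X^u(s)|^2\,du\,ds$. Since $\mathcal V(T)\ge 0$ (from $P,\bar\Pi>0$ and $\bm{\Lambda}\ge 0$), this bound is uniform in $T$. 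Integrability of $\bar X$ follows because $|\bar X^u|^2\le\EE|X^u|^2$.

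Your operator Lyapunov equation for $\bar\Pi\,\mathrm{Id}+\bm{\Lambda}$ along the deterministic mean flow gives an explicit exponential rate and keeps the two claims decoupled. The paper's route avoids exactly the fluctuation/mean bookkeeping you identify as the main obstacle.

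That bookkeeping, as you describe it, needs one repair. With deterministic initial data and the deterministic feedback $\bm{\Theta}^*_{\mathrm{op}}\bar X$, the state does not stay deterministic, because the diffusion coefficient $\bm{\mathcal C}^*\bar X(t)$ is nonzero. Writing $\check X^u:=X^u-\bar X^u$, the completed square then equals $\int_I\EE\lan\cR(P)\Theta^*\check X^u,\Theta^*\check X^u\ran\,du$, which is in general nonzero for $t>0$. Likewise, $\frac{d}{dt}\int_I\EE\lan PX^u,X^u\ran\,du$ contains fluctuation terms.

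Your identity is nevertheless true, and any one of the following closes the step:
\begin{itemize}
\item[(i)] Evaluate $\frac{d}{dt}\mathcal Y$ at $t=0$, where $\check X(0)=0$ and $\bar X(0)=\phi$ is arbitrary.
\item[(ii)] Use the full feedback, then subtract the It\^o identity for $\int_I\EE\lan P\check X^u,\check X^u\ran\,du$. By the first Riccati equation, its derivative equals minus the fluctuation part of the running cost plus $\lan\bm{\mathcal C}^*\bar X,P\bm{\mathcal C}^*\bar X\ran_{L^2}$.
\item[(iii)] Check directly that the second equation in \eqref{eq:Riccati_P_Pi_ergodic} and $F^{\infty}(\Lambda)=0$ are the local and kernel parts of the operator Riccati equation for $\bar\Pi\,\mathrm{Id}+\bm{\Lambda}$ on $L^2(I;\RR^d)$, using $\Lambda(v,u)^\top=\Lambda(u,v)$. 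Your displayed equation is the closed-loop form of that Riccati equation under $\bm{\Theta}^*_{\mathrm{op}}$.
\end{itemize}
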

\begin{proof}
The first assertion follows directly from \cite[Theorem 5.1]{Huang-Li-Yong-2015}. We next prove that $(\bar{\Theta}^*, \wt{\Theta}^*)$ is a stabilizer for the controlled ODE \eqref{eq:homo_ode}. Set $b = \sigma = 0$, and let $X = (X^u)_{u \in I}$ be the solution to the homogeneous system \eqref{eq:homo_state_process}. Then, equation \eqref{eq:Riccati_p_ergodic} becomes a homogeneous linear equation for $p$. By the uniqueness of its solution, we have $p(u) = 0$ for a.e. $u \in I$. Consequently, $\Gamma(P, p)(u) = 0$ and $\theta^*(u) = 0$ for a.e. $u \in I$. Using the calculation in Section \ref{s:system_algebraic_equations} and the differential identity \eqref{eq:differential_ergodic}, we obtain
$$d \mathcal Y(t) = \int_I \EE \big[\lan \cR(P) \{\alpha^u(t) + \cR(P)^{-1} \mathcal Z^u(t)\}, \alpha^u(t) + \cR(P)^{-1} \mathcal Z^u(t) \ran \big] du dt.$$
Let $\alpha^u(t) = - \cR(P)^{-1} \mathcal{Z}^u(t) = \Theta^* \big(X^u(t) - \bar{X}^u(t) \big) + \bar{\Theta}^* \bar{X}^u(t) + \int_I \wt{\Theta}^*(u, v) \bar{X}^v(t) dv$. Integrating the above differential identity over $[0, T]$, we derive that
\begin{equation*}
\begin{aligned}
\mathcal{V}(0) - \mathcal{V}(T) &= \int_0^T \int_I \mathbb E\big[\langle X^u(s), Q X^u(s) \rangle + 2\langle \alpha^u(s), S X^u(s) \rangle + \langle \alpha^u(s), R \alpha^u(s) \rangle \\
&\hspace{0.9in} + \langle \bar{X}^u(s), \bar{Q} \bar{X}^u(s) \rangle + \langle \bar{X}^u(s), \wt{Q} [G\bar{X}(s)]^u\rangle \\
&\hspace{0.9in} + \langle [G\bar{X}(s)]^u, \check{Q} [G\bar{X}(s)]^u\rangle \big] du ds.
\end{aligned}
\end{equation*}
In this case, $\bar{\alpha}^u(t) = \bar{\Theta}^* \bar{X}^u(t) + \int_I \wt{\Theta}^*(u, v) \bar{X}^v(t) dv$, and $\bar{X} = (\bar{X}^u)_{u \in I}$ is the solution to
\begin{equation*}
\begin{cases}
\frac{d}{dt} \bar{X}^u(t) = (A + \bar{A} + B \bar{\Theta}^*) \bar{X}^u(t) + \int_I \big(\wt{A} G(u, v) + B \wt{\Theta}^*(u, v) \big) \bar{X}^v(t) dv, \\
\bar{X}^u(0) = \bar{\xi}^u.
\end{cases}
\end{equation*}
It follows from Assumption \ref{a:coefficients_solvability_finite} that there exists a constant $c_0 > 0$ such that
$$c_0 \int_0^T \int_{I} \mathbb{E}[|X^u(s)|^2] du ds \leq \mathcal{V}(0) - \mathcal{V}(T), \quad \forall T > 0.$$
Since $P$ and $\bar{\Pi} = P + \Pi$ are both positive definite, and $\Lambda(u, v)$ is a positive kernel on $L^2(I; \mathbb{R}^d)$, we deduce that $\mathcal{V}(T) \geq 0$, and thus
$$\int_0^T \int_{I} \mathbb{E}[|X^u(s)|^2] du ds \leq \frac{1}{c_0} \mathcal{V}(0) \leq K \int_{I} \lan \bar{\xi}^u, \bar{\xi}^u \ran du, \quad \forall  T > 0$$
for some $K > 0$. Letting $T \to \infty$ and applying the monotone convergence theorem, we conclude that
$$\int_0^{\infty} \int_{I} \mathbb{E}[|X^u(s)|^2] du ds \leq K \int_{I} \lan \bar{\xi}^u, \bar{\xi}^u \ran du < \infty,$$
which yields the desired result that $(\bar{\Theta}^*, \wt{\Theta}^*)$ is a stabilizer for the controlled ODE \eqref{eq:homo_ode}.
\end{proof}

Next, we derive several estimates for $X_{\infty} = (X^u_{\infty})_{u \in I}$ defined in \eqref{eq:optimal_state_ergodic}. These estimates are essential for verifying the optimality of the pair $(X^u_{\infty}, \alpha^u_{\infty})_{u \in I}$ in the ergodic control problem and for establishing the turnpike property in Section \ref{s:turnpike_properties}. Throughout the proof, whenever no confusion can arise, we write $\|\cdot\|_{L^2}$ for both $\|\cdot\|_{L^2(I;\mathbb{R}^d)}$ and $\|\cdot\|_{L^2(I \times I; \mathbb{R}^{d \times d})}$. Moreover, we define the linear operator $\mathcal{L}$ associated with the stabilizer $(\Theta^*, \wt{\Theta}^*)$ for \eqref{eq:homo_ode} as follows: for $\phi \in L^2(I; \mathbb{R}^d)$ and $u \in I$,
\begin{equation}
\label{eq:linear_operator}
\mathcal{L}(\phi)(u) = (A + \bar{A} + B \bar{\Theta}^*) \phi^u + \int_I \big(\wt{A} G(u, v) + B \wt{\Theta}^*(u, v)\big) \phi^v dv.
\end{equation}
Thus, Lemma \ref{l:stabilizer} and the estimate \eqref{eq:operator_s_uniform_stable} imply that there exist constants $K, \lambda > 0$ such that
\begin{equation}
\label{eq:operator_uniform_stable}
\|e^{t\mathcal{L}}\|_{op} \leq K e^{-\lambda t}, \quad \forall t \geq 0.
\end{equation}

\begin{lemma}
\label{l:Moment_estimate}
Suppose that Assumptions \ref{a:coefficients_solvability_finite}, \ref{a:stabilizability}, and \ref{a:solvability_Lambda_p} hold. Then, there exists a constant $K > 0$, independent of $t$, such that the solution $X_{\infty}$ to the closed-loop equation \eqref{eq:optimal_state_ergodic} satisfies
\begin{equation}
\label{eq:moment_boundness}
\int_I \big|\bar{X}_{\infty}^u(t) \big|^2 du \leq K \quad \text{and} \quad \mathbb{E}\Big[\int_I |X^u_{\infty}(t)|^2 du \Big] \leq K, \quad \forall t \geq 0.
\end{equation}
\end{lemma}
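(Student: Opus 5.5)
The plan is to treat the mean $\bar{X}_\infty$ and the fluctuation $X_\infty - \bar{X}_\infty$ separately, using the exponential stability from Lemma \ref{l:stabilizer} for each.

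\textbf{Mean part.} Taking expectations in \eqref{eq:optimal_state_ergodic}, the collection $\bar{X}_\infty(t)=(\bar{X}^u_\infty(t))_{u\in I}$ solves a linear ODE on $L^2(I;\mathbb{R}^d)$:
$$\frac{d}{dt}\bar{X}_\infty(t) = \mathcal{L}\big(\bar{X}_\infty(t)\big) + f, \qquad f(u) := B\theta^*(u) + b,$$
where $\mathcal{L}$ is the operator in \eqref{eq:linear_operator}. The forcing $f$ lies in $L^2(I;\mathbb{R}^d)$ because $p\in L^2(I;\mathbb{R}^d)$, so $\theta^*\in L^2$. By the variation of constants formula,
$$\bar{X}_\infty(t) = e^{t\mathcal{L}}\bar{\xi} + \int_0^t e^{(t-s)\mathcal{L}} f\, ds.$$
The bound \eqref{eq:operator_uniform_stable} then gives
$$\|\bar{X}_\infty(t)\|_{L^2} \le K\|\bar{\xi}\|_{L^2} + \frac{K}{\lambda}\|f\|_{L^2}.$$
Here $\|\bar{\xi}\|^2_{L^2}\le \int_I \mathbb{E}|\xi^u|^2du<\infty$ by Jensen's inequality. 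This proves the first estimate in \eqref{eq:moment_boundness}. It also bounds the vector $g^u(t) := \int_I \big(\wt{C}G(u,v)+D\wt{\Theta}^*(u,v)\big)\bar{X}^v_\infty(t)\,dv$ uniformly in $t$ in $L^2(I)$, since $G$ and $\wt{\Theta}^*$ are $L^2$ kernels and therefore define bounded Hilbert--Schmidt operators by \eqref{eq:Hilbert_Schmidt_operator}.

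\textbf{Fluctuation part.} Set $\check{X}^u := X^u_\infty - \bar{X}^u_\infty$. As in the proof of Lemma \ref{l:stabilizability_homo_SDE}, $\check{X}^u$ solves
$$d\check{X}^u = (A+B\Theta^*)\check{X}^u\,dt + \big\{(C+D\Theta^*)\check{X}^u + h^u(t)\big\}dW^u,$$
with $h^u(t) := (C+\bar{C}+D\bar{\Theta}^*)\bar{X}^u_\infty(t) + g^u(t) + D\theta^*(u) + \sigma$. By the mean step, $\sup_{t\ge 0}\int_I |h^u(t)|^2du \le K$.

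Lemma \ref{l:stabilizer} says $\Theta^*$ stabilizes $[A,C;B,D]$. So by \cite[Proposition 3.5]{Huang-Li-Yong-2015} there is $\hat{P}\in\mathbb{S}^d_{++}$ solving the Lyapunov equation
$$\hat{P}(A+B\Theta^*) + (A+B\Theta^*)^\top\hat{P} + (C+D\Theta^*)^\top\hat{P}(C+D\Theta^*) + \bm{I}_d = 0.$$
Apply It\^o's formula to $\langle \hat{P}\check{X}^u,\check{X}^u\rangle$, take expectations, and integrate over $u$; the martingale term vanishes. Writing $y(t) := \int_I \mathbb{E}\langle \hat{P}\check{X}^u(t),\check{X}^u(t)\rangle du$, this gives
$$\dot{y}(t) = -\int_I\mathbb{E}|\check{X}^u|^2du + 2\int_I\mathbb{E}\big\langle \hat{P}(C+D\Theta^*)\check{X}^u, h^u\big\rangle du + \int_I \langle \hat{P}h^u, h^u\rangle du.$$
Apply Young's inequality to the cross term, absorbing half of it into $-\int_I\mathbb{E}|\check{X}^u|^2du$. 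Since $\int_I\mathbb{E}|\check{X}^u|^2du \ge y/\|\hat{P}\|$, we obtain $\dot{y}\le -c\,y + K$ for some $c>0$. Gr\"onwall's inequality then gives $y(t)\le y(0)+K/c$ for all $t\ge 0$. As $\hat{P}>0$, this yields $\sup_{t\ge 0}\int_I\mathbb{E}|\check{X}^u(t)|^2du<\infty$. Finally,
$$\mathbb{E}\int_I|X^u_\infty|^2du = \mathbb{E}\int_I|\check{X}^u|^2du + \int_I|\bar{X}^u_\infty|^2du,$$
which gives the second estimate in \eqref{eq:moment_boundness}.

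\textbf{Main obstacle.} The delicate point is the fluctuation step: its diffusion coefficient is driven by the mean, so we need a uniform-in-time bound, not merely the square-integrability provided by Lemma \ref{l:stabilizability_homo_SDE}. The Lyapunov differential inequality handles this. One technical point to justify is differentiating $y$, which requires applying It\^o's formula for a.e.\ $u$ and then integrating over $I$. This is justified by the finite-horizon moment bounds together with the measurability of $u\mapsto \mathrm{Law}(X^u_\infty)$.
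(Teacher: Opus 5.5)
Your proposal is correct and follows essentially the same route as the paper. The mean is handled by variation of constants with the exponentially stable semigroup $e^{t\mathcal{L}}$, and the fluctuation $\check{X}=X_\infty-\bar{X}_\infty$ by Itô's formula with the Lyapunov matrix for $[A+B\Theta^*,C+D\Theta^*]$ followed by Gr\"onwall. The only difference is minor: the paper drops the cross term $2\,\mathbb{E}\langle \hat{P}(C+D\Theta^*)\check{X}^u(t), h^u(t)\rangle$ outright, since $h^u(t)$ is deterministic and $\mathbb{E}[\check{X}^u(t)]=0$, so your Young's inequality step is harmless but unnecessary.
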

\begin{proof}
Taking expectations in the SDE \eqref{eq:optimal_state_ergodic}, we obtain
\begin{equation*}
d\bar{X}_{\infty}^u(t) = \Big\{(A + \bar{A} + B \bar{\Theta}^*) \bar{X}_{\infty}^u(t) + B \theta^*(u) + b + \int_I \big(\wt{A} G(u, v) + B \wt{\Theta}^*(u, v)\big) \bar{X}_{\infty}^v(t) dv \Big\} dt
\end{equation*}
with $\bar{X}_{\infty}^u(0) = \bar{\xi}^u$ for $u \in I$, which is equivalent to
\begin{equation*}
\frac{d}{dt} \bar{X}_{\infty}^u(t) = \mathcal{L}(\bar{X}_{\infty}(t))(u) + B \theta^*(u) + b
\end{equation*}
by the definition of the linear operator $\mathcal{L}$ in \eqref{eq:linear_operator}. Next, we verify that $\bar{X}_{\infty}(t) \in L^2(I; \mathbb{R}^d)$ for all $t \geq 0$ and obtain a uniform estimate for $\bar{X}_{\infty}$. By variation of constants, we have
\begin{equation*}
\bar{X}_{\infty}^u(t) = \big(e^{t\mathcal{L}} \bar{X}_{\infty}(0)\big)(u) + \int_0^t e^{(t-s)\mathcal{L}} (B \theta^*(u) + b) ds.
\end{equation*}
By the estimate \eqref{eq:operator_uniform_stable}, since $\theta^* \in L^2(I; \mathbb{R}^m)$ and $\|\bar{\xi} \|_{L^2} \leq K$, we obtain the following uniform bound for $\|\bar{X}_{\infty}(t)\|_{L^2}$:
\begin{equation*}
\begin{aligned}
\|\bar{X}_{\infty}(t)\|_{L^2} & \leq \|e^{t\mathcal{L}}\|_{op} \|\bar{\xi}\|_{L^2} + \int_0^t \|e^{(t-s)\mathcal{L}}\|_{op} \|B \theta^* + b\|_{L^2} ds \\
&\leq K e^{-\lambda t} + K \int_0^t e^{-\lambda(t-s)} ds \leq K, \quad \forall t \geq 0.
\end{aligned}
\end{equation*}

Next, we set $\check{X}^u_\infty(t) = X_{\infty}^u(t) - \bar{X}_{\infty}^u(t)$ for all $t \geq 0$ and $u \in I$. Then, $\check{X}^u_\infty(t)$ satisfies the following SDE:
\begin{equation*}
\begin{aligned}
d \check{X}^u_\infty(t) &= (A + B {\Theta}^*) \check{X}^u_\infty(t) dt + \Big\{(C + D {\Theta}^*) \check{X}^u_\infty(t) + (C+\bar{C} + D \bar{\Theta}^*) \bar{X}_{\infty}^u(t) + D \theta^*(u) + \sigma \\
& \hspace{0.3in} + \int_I \big(\wt{C} G(u, v) + D \wt{\Theta}^*(u, v) \big) \bar{X}_{\infty}^v(t) dv \Big\} dW^u(t) \\
&:= (A + B {\Theta}^*) \check{X}^u_\infty(t) dt + \big\{ (C + D {\Theta}^*) \check{X}^u_\infty(t) + \Sigma^u(t) \big\} dW^u(t)
\end{aligned}
\end{equation*}
with $\check{X}^u_\infty(0) = \xi^u - \bar{\xi}^u$, where
\begin{equation*}
\Sigma^u(t) := (C+\bar{C} + D \bar{\Theta}^*) \bar{X}_{\infty}^u(t) + D \theta^*(u) + \sigma + \int_I \big(\wt{C} G(u, v) + D \wt{\Theta}^*(u, v) \big) \bar{X}_{\infty}^v(t) dv.
\end{equation*}
Since $\theta^* \in L^2(I; \mathbb{R}^m)$, $\wt{\Theta}^* \in L^{2}(I \times I; \mathbb{R}^{m \times d})$, $G \in L^2(I \times I; \mathbb{R})$, and $\|\bar{X}_{\infty}(t)\|_{L^2} \le K$, H\"older's inequality gives
\begin{equation*}
\begin{aligned}
\|\Sigma(t)\|_{L^2}^2 & \leq K \Big(\|C+\bar{C} + D \bar{\Theta}^*\|^2 \|\bar{X}_{\infty}(t)\|^2_{L^2} + \|D \theta^* + \sigma\|^2_{L^2} \\
& \hspace{0.5in} + \big(\|G\|^2_{L^2} +   \|\wt{\Theta}^*\|^2_{L^2} \big) \|\bar{X}_{\infty}(t)\|^2_{L^2} \Big) \leq K,
\end{aligned}
\end{equation*}
for all $t \geq 0$. Since ${\Theta}^*$ is a stabilizer of the homogeneous SDE \eqref{eq:homo_SDE}, the system $[A + B {\Theta}^*, C + D {\Theta}^*]$ is $L^2$-exponentially stable. Thus, the following Lyapunov equation admits a unique solution $\bar{P}\in\bS_{++}^d$:
\begin{equation}
\label{eq:Lyapunov_equation}
\bar{P}(A + B {\Theta}^*) + (A + B {\Theta}^*)^\top \bar{P} + (C + D {\Theta}^*)^\top \bar{P} (C + D{\Theta}^*) + \bm{I}_{d} = 0.
\end{equation}
Let $\beta^{*} > 0$ denote the largest eigenvalue of $\bar{P}$. Applying It\^o's formula, we obtain
\begin{equation*}
\begin{aligned}
& \frac{d}{dt} \mathbb{E}\big[ \langle \bar{P} \check{X}^u_\infty(t),  \check{X}^u_\infty(t) \rangle \big] \\ 
= \ & \mathbb{E} \big[ \langle \{\bar{P}(A + B {\Theta}^*) + (A + B {\Theta}^*)^\top \bar{P} + (C + D {\Theta}^*)^\top \bar{P}(C + D{\Theta}^*) \} \check{X}^u_\infty(t), \check{X}^u_\infty(t) \rangle \\
& \hspace{0.3in} + 2 \langle (C + D{\Theta}^*)^\top \bar{P} \Sigma^u(t), \check{X}^u_\infty(t) \rangle + \langle \Sigma^u(t), \bar{P} \Sigma^u(t) \rangle \big] \\
= \ & -\mathbb{E} \big[|\check{X}^u_\infty(t)|^2 \big] + \langle \Sigma^u(t), \bar{P} \Sigma^u(t) \rangle.
\end{aligned}
\end{equation*}
By Fubini's theorem, we derive that
\begin{equation*}
\begin{aligned}
\frac{d}{dt} \mathbb{E} \Big[ \int_I \langle \check{X}^u_\infty(t), \bar{P} \check{X}^u_\infty(t) \rangle du \Big] & \leq - \frac{1}{\beta^{*}} \mathbb{E} \Big[ \int_I \langle \bar{P}\check{X}^u_\infty(t), \check{X}^u_\infty(t) \rangle du \Big] + K \|\Sigma(t)\|_{L^2}^2 \\
&\leq - \frac{1}{\beta^{*}} \mathbb{E} \Big[\int_I \langle \bar{P}\check{X}^u_\infty(t), \check{X}^u_\infty(t) \rangle du \Big] + K
\end{aligned}
\end{equation*}
for all $t \geq 0$. Then, Gr\"onwall's inequality implies that
\begin{equation*}
\begin{aligned}
\mathbb{E} \Big[ \int_I \langle \bar{P} \check{X}^u_\infty(t), \check{X}^u_\infty(t) \rangle du \Big] & \leq K e^{-\frac{1}{\beta^{*}} t} \mathbb{E} \Big[ \int_I |\check{X}^u_\infty(0)|^2 du \Big] + K \int_0^t e^{-\frac{1}{\beta^{*}} (t-s)} ds \\
& \leq K e^{- \frac{1}{\beta^{*}} t} + K \beta^{*} \leq K
\end{aligned}
\end{equation*}
for all $t \geq 0$. Since $\bar{P} > 0$, we conclude
\begin{equation*}
\mathbb{E} \Big[ \int_I |\check{X}^u_\infty(t)|^2 du \Big] \leq K, \quad \forall t \geq 0.
\end{equation*}
Hence, by the definition of $\check{X}^u_\infty(t)$, we obtain the desired estimate
\begin{equation*}
\mathbb{E} \Big[\int_I  |X_{\infty}^u(t)|^2 du \Big] \leq 2 \mathbb{E} \Big[ \int_I |\check{X}_{\infty}^u(t)|^2 du \Big] + 2 \int_I |\bar{X}_{\infty}^u(t)|^2 du \leq K, \quad \forall t \geq 0.
\end{equation*}
\end{proof}

The following proposition summarizes the main result for the ergodic GMFC problem.

\begin{proposition}
\label{p:solvability_ergodic_control}
Assume that Assumptions \ref{a:coefficients_solvability_finite}, \ref{a:stabilizability}, and \ref{a:solvability_Lambda_p} hold. Let $(P, \bar{\Pi})$, $\Lambda$, and $p$ be the unique solutions to the equations \eqref{eq:Riccati_P_Pi_ergodic}, \eqref{eq:Riccati_Lambda_ergodic}, and \eqref{eq:Riccati_p_ergodic}, respectively, and define $\Pi = \bar{\Pi} - P$. Then:
\begin{itemize}
\item The optimal control and optimal state for the ergodic GMFC problem \eqref{eq:cost_functional_infinite_horizon} are given by \eqref{eq:optimal_control_ergodic} and \eqref{eq:optimal_state_ergodic}, respectively.

\item For all $\xi \in \kL^2_0$, the ergodic value $V_{\infty}(\xi) := \inf_{\alpha \in \mathcal A[0, \infty)} J_{\infty}(\xi, \alpha)$ is given by
\begin{equation}
\label{eq:ergodic_value}
V_{\infty} := V_{\infty}(\xi) = \int_I \big(- \langle \Gamma(P, p)(u), \cR(P)^{-1} \Gamma(P, p)(u) \rangle + \langle P \sigma, \sigma \rangle + 2 \langle p(u), b \rangle \big) du,
\end{equation}
which is constant and independent of the initial state $\xi$.
\end{itemize}
\end{proposition}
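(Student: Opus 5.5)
The plan is to deduce everything from the fundamental relation of Lemma \ref{l:fundamental_relation_ergodic}. It expresses $J_\infty(\xi,\alpha)$ as the constant $V_\infty$ in \eqref{eq:ergodic_value} plus the nonnegative term
\[
\limsup_{T\to\infty}\frac1T\int_0^T\int_I\EE\big[\lan \cR(P)\{\alpha^u(t)+\cR(P)^{-1}\mathcal Z^u(t)\},\alpha^u(t)+\cR(P)^{-1}\mathcal Z^u(t)\ran\big]du\,dt .
\]
Since $\cR(P)=R+D^\top PD\ge R>0$, this term is nonnegative. Hence $J_\infty(\xi,\alpha)\ge V_\infty$ for every $\alpha\in\mathcal A[0,\infty)$, and the lower bound does not depend on $\xi$.

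The next step is to show that the feedback $\alpha_\infty$ in \eqref{eq:optimal_control_ergodic} attains this bound. By construction $\alpha_\infty^u(t)=-\cR(P)^{-1}\mathcal Z^u(t)$ along $X_\infty$, since $\mathcal Z^u$ with $X=X_\infty$ equals $-\cR(P)(\Theta^*\check X^u_\infty+\bar\Theta^*\bar X^u_\infty+\theta^*(u)+\int_I\wt\Theta^*(u,v)\bar X^v_\infty dv)$. This uses $\cM(P)+\bar\cM(P,\Pi)=\wh\cM(P,\Pi)$ to regroup the $\bar X$ terms. So the quadratic term vanishes identically and $J_\infty(\xi,\alpha_\infty)=V_\infty$, provided we know $\alpha_\infty\in\mathcal A[0,\infty)$.

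The main work is therefore admissibility, which I would obtain from Lemma \ref{l:Moment_estimate}. The closed-loop equation \eqref{eq:optimal_state_ergodic} has $L^2$/bounded coefficients ($\theta^*\in L^2(I;\RR^m)$, $\wt\Theta^*\in L^2(I\times I;\RR^{m\times d})$), so it is well posed on each $[0,T]$ by the same contraction argument as for \eqref{eq:state_process}. Its feedback is a measurable function of $(u,t,W^u_{\cdot\wedge t},\zeta^u)$, because $\bar X_\infty$ is deterministic. Restrictions to $[0,T]$ therefore lie in $\mathcal A[0,T]$. Lemma \ref{l:Moment_estimate} gives $\sup_{t}\EE\int_I|X^u_\infty(t)|^2du\le K$, so $\frac1T\EE\int_I|X^u_\infty(T)|^2du\le K/T\to0$, which is the transversality condition. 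For the long-run average integrability, I bound $\EE\int_I|\alpha^u_\infty(t)|^2du$ by $K(\EE\int_I|X^u_\infty|^2+\int_I|\bar X^u_\infty|^2+\|\theta^*\|_{L^2}^2+\|\wt\Theta^*\|_{L^2}^2\|\bar X_\infty\|^2_{L^2})\le K$ using Cauchy--Schwarz in $v$. The time averages of $|X_\infty|^2+|\alpha_\infty|^2$ are then bounded by $2K$.

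A subtle point is that the proof of Lemma \ref{l:fundamental_relation_ergodic} requires $\frac1T(\mathcal V(T)-\mathcal V(0))\to0$. The $\Lambda$ and $p$ parts of $\mathcal V$ are controlled by $\|\bm\Lambda\|_{op}\|\bar X(T)\|^2_{L^2}$ and $\|p\|_{L^2}\|\bar X(T)\|_{L^2}$. Using Jensen, $\|\bar X(T)\|^2_{L^2}\le\EE\int_I|X^u(T)|^2du$, and the transversality condition then gives the limit. I will check that this holds for all admissible $\alpha$, so the lower bound is valid on the whole class.

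Combining the three steps gives $V_\infty(\xi)=\inf_\alpha J_\infty(\xi,\alpha)=J_\infty(\xi,\alpha_\infty)=V_\infty$, so $(X_\infty,\alpha_\infty)$ is optimal and the value is independent of $\xi$. I expect the main obstacle to be the admissibility and measurability verification, and in particular the uniform moment bound, which has already been isolated in Lemma \ref{l:Moment_estimate} via the exponential stability \eqref{eq:operator_uniform_stable} from Lemma \ref{l:stabilizer}. Uniqueness of the optimal control, if it is wanted, would come from requiring the limsup of the quadratic term to vanish, but the statement only asks for optimality.
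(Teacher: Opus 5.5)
Your proposal is correct and takes the same route as the paper. The fundamental relation of Lemma~\ref{l:fundamental_relation_ergodic} gives $J_\infty(\xi,\alpha)\ge V_\infty$, with equality for the feedback $\alpha_\infty=-\cR(P)^{-1}\mathcal Z$, and admissibility of $\alpha_\infty$ follows from the uniform moment bound of Lemma~\ref{l:Moment_estimate}; you simply spell out the details the paper's one-line proof leaves implicit: the regrouping via $\cM(P)+\bar{\cM}(P,\Pi)=\wh{\cM}(P,\Pi)$, the transversality and long-run integrability checks, and the vanishing of $\frac1T(\mathcal V(T)-\mathcal V(0))$.
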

\begin{proof}
The proof is analogous to that of Proposition \ref{p:solvability_finite_control} and follows directly from the fundamental relation established in Lemma \ref{l:fundamental_relation_ergodic} and the moment estimate for $X_{\infty}$ obtained in Lemma \ref{l:Moment_estimate}.
\end{proof}


\section{Convergence of coefficient functions}
\label{s:convergence_Riccati_system}

Recall that the value function $V_T$ and the optimal pair $(X_T, \alpha_T)$ for the finite-horizon GMFC problem are characterized by the solutions to the system of differential equations \eqref{eq:ODE_P_T}, \eqref{eq:ODE_Pi_T}, \eqref{eq:ODE_Lambda_T}, \eqref{eq:ODE_p_T}, and \eqref{eq:ODE_kappa_T}. Similarly, the ergodic value $V_{\infty}$ and the optimal pair $(X_{\infty}, \alpha_{\infty})$ of the ergodic GMFC problem are characterized by the solutions to the system of algebraic equations \eqref{eq:Riccati_P_Pi_ergodic}, \eqref{eq:Riccati_Lambda_ergodic} and \eqref{eq:Riccati_p_ergodic}. In this section, we derive several key estimates relating the finite-horizon coefficients $P_T, \bar{\Pi}_T, \Lambda_T, p_T, \Theta_T^*, \bar{\Theta}_T^*, \wt{\Theta}_T^*$ and $\theta_T^*$ to their ergodic counterparts $P, \bar{\Pi}, \Lambda, p, \Theta^*, \bar{\Theta}^*, \wt{\Theta}^*$ and $\theta^*$.
In what follows, we use $K$ and $\lambda$ to denote generic positive constants that may vary from line to line.

\begin{lemma}
\label{l:exponential_estimates_P_Pi}
Suppose that Assumptions \ref{a:coefficients_solvability_finite} and \ref{a:stabilizability} hold. Let $P_T$ and $\bar{\Pi}_T$ be the unique solutions to \eqref{eq:ODE_P_T} and \eqref{eq:ODE_Pi_T}, respectively, and let the pair $(P, \bar{\Pi})$ be the unique solution to \eqref{eq:Riccati_P_Pi_ergodic}. Then, there exist constants $K > 0$ and $\lambda > 0$, independent of $T$, such that
\begin{equation}
\label{eq:exponential_estimate_P_Pi}
\|P - P_T(t)\| + \|\bar{\Pi} - \bar{\Pi}_T(t)\| \leq K e^{-\lambda(T-t)}, \quad \forall t \in [0, T].
\end{equation}
Consequently, there exist constants $K, \lambda > 0$, independent of $T$, such that
\begin{equation}
\label{eq:exponential_estimate_Theta}
\|\Theta^* - \Theta^*_T(t)\| + \|\bar{\Theta}^* - \bar{\Theta}^*_T(t)\| \leq K e^{-\lambda(T-t)}, \quad \forall t \in [0, T].
\end{equation}
\end{lemma}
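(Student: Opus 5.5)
The plan is to handle $P_T$ and $\bar\Pi_T$ one after the other, since $P_T$ solves the autonomous Riccati equation \eqref{eq:ODE_P_T} on its own, and $\bar\Pi_T$ then solves a standard Riccati equation whose coefficients depend on $P_T(t)$. For $P_T$ I would work with the difference $\Delta(t) := P - P_T(t)$. This quantity is positive semidefinite, because $P_T(t)\le P$ (the comparison from \cite[Lemma 2.2]{Sun-Yong-2024} recalled above). Subtracting \eqref{eq:ODE_P_T} from the first equation in \eqref{eq:Riccati_P_Pi_ergodic} and using the identity $\cQ(P)-\cM(P)^\top\cR(P)^{-1}\cM(P)=0$, together with the closed-loop matrices $A+B\Theta^*$ and $C+D\Theta^*$, yields a Riccati-type equation for $\Delta$:
\begin{equation*}
\dot\Delta + (A+B\Theta^*)^\top\Delta + \Delta(A+B\Theta^*) + (C+D\Theta^*)^\top\Delta(C+D\Theta^*) - \mathcal{N}(t)^\top \cR(P_T(t))^{-1}\mathcal{N}(t) = 0,
\end{equation*}
with $\Delta(T)=P$ and $\mathcal{N}(t) = B^\top\Delta + D^\top\Delta(C+D\Theta^*)$. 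The last term is negative semidefinite. Viewing $\Delta$ as a supersolution with a nonpositive source, a comparison/representation argument gives $0\le\Delta(t)\le \Phi^*(T-t)(P)$, where $\Phi^*(s)$ denotes the adjoint Lyapunov flow generated by $[A+B\Theta^*,C+D\Theta^*]$. Equivalently, $\langle \Delta(t)x,x\rangle \le \EE\langle P\,Y(T),Y(T)\rangle$, where $Y$ solves the SDE $[A+B\Theta^*,C+D\Theta^*]$ started at time $t$ from $x$.

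By Lemma \ref{l:stabilizer}, $\Theta^*$ is a stabilizer, and the Lyapunov equation \eqref{eq:Lyapunov_equation} gives $\EE|Y(s)|^2\le Ke^{-\lambda (s-t)}|x|^2$. Hence $\|P-P_T(t)\|\le Ke^{-\lambda(T-t)}$. The representation itself requires care. I would derive it by applying Itô's formula to $\langle\Delta(s)Y(s),Y(s)\rangle$ on $[t,T]$, where the only remaining term is $-\EE\langle \cR^{-1}\mathcal N Y,\mathcal N Y\rangle\le0$.

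For $\bar\Pi_T$ I would use the same structure. Set $\bar\Delta(t)=\bar\Pi-\bar\Pi_T(t)\ge0$ and subtract \eqref{eq:ODE_Pi_T} from the second equation in \eqref{eq:Riccati_P_Pi_ergodic}. Expanding around $\bar\Theta^*$, with the drift $A+\bar A+B\bar\Theta^*$, produces a Lyapunov-type linear equation driven by $A+\bar A+B\bar\Theta^*$, a negative semidefinite quadratic term in $\bar\Delta$, and in addition forcing terms from $P_T\neq P$. Those forcing terms enter through $(C+\bar C)^\top P(C+\bar C)$, through $D^\top P(C+\bar C)$ inside $\wh\cM$, and through $\cR(P)$. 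By the first step each of them is $O(e^{-\lambda(T-t)})$. The main obstacle, as I see it, is the stability of the relevant drift matrix. Lemma \ref{l:stabilizer} asserts stability of the full operator $\mathcal L$ in \eqref{eq:linear_operator}, which also involves $\wt\Theta^*$, not stability of $A+\bar A+B\bar\Theta^*$ alone. I would therefore rely on the finite-dimensional theory instead. Since $\bar\Pi$ is the stabilizing solution of a standard ARE, either \cite[Lemma 2.2]{Sun-Yong-2024} or the standard LQ argument (take $\bar X$ constant in $u$ in \eqref{eq:homo_ode}, i.e.\ the classical mean-field ODE) shows that $A+\bar A+B\bar\Theta^*$ is Hurwitz. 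With that in hand, variation of constants, using $\|e^{s(A+\bar A+B\bar\Theta^*)}\|\le Ke^{-\lambda s}$ and the sign of the quadratic term, bounds $\bar\Delta(t)$ by $Ke^{-2\lambda(T-t)}\|\bar\Pi\|$ plus $\int_t^T Ke^{-\lambda(s-t)}e^{-\lambda(T-s)}ds$. The second term is at most $K(T-t)e^{-\lambda(T-t)}$, which after shrinking $\lambda$ is $\le Ke^{-\lambda'(T-t)}$.

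Finally, \eqref{eq:exponential_estimate_Theta} follows algebraically. $\cR(P_T(t))^{-1}$ is uniformly bounded by $\|R^{-1}\|$, and $P_T,\bar\Pi_T$ are uniformly bounded by $P,\bar\Pi$. The identity $\cR(P)^{-1}-\cR(P_T)^{-1}=\cR(P)^{-1}D^\top(P_T-P)D\,\cR(P_T)^{-1}$, together with the Lipschitz dependence of $\cM$ and $\wh\cM$ on $(P,\Pi)$, gives $\|\Theta^*-\Theta^*_T(t)\|+\|\bar\Theta^*-\bar\Theta^*_T(t)\|\le K(\|P-P_T(t)\|+\|\bar\Pi-\bar\Pi_T(t)\|)$. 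This yields the claim.
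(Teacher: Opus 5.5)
There is a sign error in your equation for $\Delta=P-P_T$, and it reverses the inequality that both of your steps rely on. You have $\cR(P_T)\Theta^*+\cM(P_T)=-\mathcal N$, i.e.\ $\Theta^*-\Theta^*_T(t)=-\cR(P_T(t))^{-1}\mathcal N(t)$. Writing both Riccati equations in closed-loop form with the fixed gain $\Theta^*$ then gives
\[
\dot\Delta+(A+B\Theta^*)^\top\Delta+\Delta(A+B\Theta^*)+(C+D\Theta^*)^\top\Delta(C+D\Theta^*)+\mathcal N^\top\cR(P_T)^{-1}\mathcal N=0,
\]
with a plus sign on the quadratic term.

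It\^o's formula along the $\Theta^*$-closed loop $Y$ therefore yields
\[
\langle\Delta(t)x,x\rangle=\EE\langle PY(T),Y(T)\rangle+\int_t^T\EE\langle\cR(P_T)^{-1}\mathcal NY,\mathcal NY\rangle\,ds\ge\EE\langle PY(T),Y(T)\rangle .
\]
This is a \emph{lower} bound, not an upper bound. It has to be so: $\langle\Delta(t)x,x\rangle$ equals $V_\infty-V_T$ for the homogeneous problem, and using the suboptimal gain $\Theta^*$ on $[t,T]$ can only overestimate $V_T$.

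A concrete check: take $d=m=1$, $A=C=D=S=0$, $B=Q=R=1$ (the other coefficients do not enter \eqref{eq:ODE_P_T}). Then $P=1$, $\Theta^*=-1$ and $P_T(t)=\tanh(T-t)$. Hence $\Delta(t)=2e^{-2(T-t)}/(1+e^{-2(T-t)})>e^{-2(T-t)}=\Phi^*(T-t)(P)$ for every $t<T$, so your claimed bound $\Delta(t)\le\Phi^*(T-t)(P)$ is false.

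Step 2 fails in the same way. When you expand around $\bar\Theta^*$, the term $(\bar\Theta^*-\bar\Theta^*_T)^\top\cR(P_T)(\bar\Theta^*-\bar\Theta^*_T)\ge0$ enters the equation for $\bar\Delta$ with the same unfavorable sign. So it cannot be dropped from your variation-of-constants bound.

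The paper does not argue directly. It cites \cite[Lemma 2.3, Theorem 4.1]{Sun-Yong-2024} for \eqref{eq:exponential_estimate_P_Pi} and \cite[Theorem 4.1]{Bayraktar-Jian-2025} for \eqref{eq:exponential_estimate_Theta}. For a self-contained proof, you have two repairs.

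(a) Linearize around the finite-horizon gain instead. Let $L_T^*(t)$ be the Lyapunov operator of $[A+B\Theta^*_T(t),C+D\Theta^*_T(t)]$ and $\mathcal N_T=B^\top\Delta+D^\top\Delta(C+D\Theta^*_T)$. Then $\dot\Delta+L_T^*(t)\Delta=\mathcal N_T^\top\cR(P)^{-1}\mathcal N_T\ge0$, which gives the correct bound $\langle\Delta(t)x,x\rangle\le\EE\langle PX^*(T),X^*(T)\rangle$, with $X^*$ the finite-horizon optimal state from $(t,x)$. The real work then becomes the $T$-uniform decay of $\EE|X^*(T)|^2$. This follows from coercivity and dynamic programming: if $Q-S^\top R^{-1}S\ge\delta\bm{I}_d$, then $\phi(s)=\int_s^T\EE|X^*(r)|^2dr$ satisfies $\delta\phi\le-\|P\|\phi'$. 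A Gr\"onwall step on $[T-1,T]$, using the uniformly bounded $\Theta^*_T$, finishes it.

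(b) Keep the $\Theta^*$-linearization with the correct sign and treat the quadratic term as $O(\|\Delta\|^2)$. Close with the pointwise convergence $P_T(t)\to P$ and the Bihari-type bootstrap used for $\Lambda$ in Proposition~\ref{p:exponential_estimates_Lambda}; do the same for $\bar\Delta$.

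Your final algebraic step to \eqref{eq:exponential_estimate_Theta} is fine. One minor point: Hurwitz stability of $A+\bar A+B\bar\Theta^*$ follows directly by writing the second equation of \eqref{eq:Riccati_P_Pi_ergodic} in Lyapunov form, using $\bar\Pi>0$ and $Q+\bar Q-S^\top R^{-1}S>0$. Restricting \eqref{eq:homo_ode} to $u$-constant states does not work, because the graphon term does not drop out.
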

\begin{proof}
The estimate \eqref{eq:exponential_estimate_P_Pi} follows directly from Lemma 2.3 and Theorem 4.1 in \cite{Sun-Yong-2024}. By Theorem 4.1 in \cite{Bayraktar-Jian-2025}, we conclude the estimate in \eqref{eq:exponential_estimate_Theta}. 
\end{proof}

Next, we establish a similar exponential estimate for the solution to the abstract Riccati equation \eqref{eq:ODE_Lambda_T} and its ergodic counterpart \eqref{eq:Riccati_Lambda_ergodic}. For simplicity of notation, we define
\begin{equation*}
\Delta_P(t) = P - P_T(t), \quad \Delta_{\bar{\Pi}}(t) = \bar{\Pi} - \bar{\Pi}_T(t), \quad \Delta_{\Lambda}(t) = \Lambda - \Lambda_T(t),
\end{equation*}
for all $t \in [0, T]$.  

\begin{proposition}
\label{p:exponential_estimates_Lambda}
Assume that Assumptions \ref{a:coefficients_solvability_finite}, \ref{a:stabilizability}, and \ref{a:solvability_Lambda_p} hold. Let $\Lambda_T$ and $\Lambda$ be the unique solutions to \eqref{eq:ODE_Lambda_T} and \eqref{eq:Riccati_Lambda_ergodic}, respectively. Then, there exist constants $K > 0$ and $\lambda > 0$, independent of $T$, such that
\begin{equation}
\label{eq:exponential_estimate_Lambda}
\|\Lambda - \Lambda_T(t)\|_{L^2(I \times I; \mathbb{R}^{d \times d})} \leq K e^{-\lambda(T-t)}, \quad \forall t \in [0, T].
\end{equation}
Moreover, there exist constants $K, \lambda > 0$, independent of $T$, such that
\begin{equation}
\label{eq:exponential_estimate_tilde_Theta}
\|\wt{\Theta}^* - \wt{\Theta}^*_T(t)\|_{L^2(I \times I; \mathbb{R}^{m \times d})}  \leq K e^{-\lambda(T-t)}, \quad \forall t \in [0, T].
\end{equation}
\end{proposition}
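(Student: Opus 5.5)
The strategy is to subtract the abstract Riccati equation \eqref{eq:ODE_Lambda_T} from its algebraic counterpart \eqref{eq:Riccati_Lambda_ergodic}. This yields a linear evolution equation for $\Delta_\Lambda$ on $L^2(I\times I;\RR^{d\times d})$, driven by the operator $\mathcal{L}$ of \eqref{eq:linear_operator} on both sides. The extra terms are either exponentially small, by Lemma \ref{l:exponential_estimates_P_Pi}, or quadratic and sign-definite.

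\textbf{Step 1: the difference equation.} Write $\Upsilon_T(t)=\Upsilon(P_T(t),\Lambda_T(t))$ and $\Upsilon=\Upsilon(P,\Lambda)$. Expand $F^\infty(\Lambda)-F(t,\Lambda_T(t))$ by inserting and subtracting $P,\Pi$ in every coefficient. The terms that are linear in $\Lambda$ combine with the quadratic terms, after completing the difference of the quadratic forms. With $\wt{\Theta}^*$ and $\bar{\Theta}^*$ as in \eqref{eq:Theta_notation}, this should give an identity of the form
\begin{equation*}
\dot{\Delta}_\Lambda(t)\ \text{(as an operator)} = -\big(\mathcal{L}^*\bm{\Delta}_\Lambda(t)+\bm{\Delta}_\Lambda(t)\mathcal{L}\big)+\bm{\Upsilon}_\Delta(t)^*\cR(P_T(t))^{-1}\bm{\Upsilon}_\Delta(t)+\bm{\mathcal{E}}(t),
\end{equation*}
where $\Upsilon_\Delta=B^\top\Delta_\Lambda+D^\top\Delta_P\wt{C}G$, the terminal value is $\Delta_\Lambda(T)=\Lambda$, and $\bm{\mathcal{E}}(t)$ collects all terms containing $\Delta_P$, $\Delta_{\bar\Pi}$, or $\cR(P)^{-1}-\cR(P_T(t))^{-1}$.

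By Lemma \ref{l:exponential_estimates_P_Pi}, the Hilbert–Schmidt bound \eqref{eq:Hilbert_Schmidt_operator}, and uniform bounds on $\|\Lambda_T(t)\|_{L^2}$, we get $\|\mathcal{E}(t)\|_{L^2}\le Ke^{-\lambda(T-t)}$. The uniform bounds on $\Lambda_T$ come from repeating Step 2 of Proposition \ref{p:solvability_system_finite} with constants independent of $T$: the lower bound follows from $J_T\ge0$, and the upper bound from comparing with the ergodic stabilizing feedback and using \eqref{eq:operator_uniform_stable}.

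\textbf{Step 2: variation of constants.} Set $s=T-t$. The equation is linear in $\Delta_\Lambda$ up to the quadratic term, which is positive semidefinite as an operator. Since $\cR(P_T)^{-1}\le R^{-1}$ is uniformly bounded, the quadratic term is bounded by $K\|\bm{\Delta}_\Lambda\|_{op}\,\|\Upsilon_\Delta\|$. It is easier to use a sandwich argument. Rewrite the equation around $\Lambda_T$'s own closed-loop operator $\mathcal{L}_T(t)$, defined from $\bar\Theta_T^*$ and $\wt\Theta_T^*$, so that the quadratic term disappears. The result is
\begin{equation*}
\bm{\Delta}_\Lambda(t)=e^{(T-t)\mathcal{L}^*}\ldots
\end{equation*}
in the following two-sided form: one representation uses $\mathcal{L}$ on both sides, the other uses $\mathcal{L}$ on one side and $\mathcal{L}_T$ on the other, which gives linear Lyapunov-type equations. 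Concretely, I will use
\[
\bm\Delta_\Lambda(t)=\Phi(T,t)^*\bm\Lambda\,\Psi(T,t)+\int_t^T\Phi(r,t)^*\bm{\mathcal{E}}'(r)\Psi(r,t)\,dr,
\]
where $\Phi$ is the evolution family generated by $\mathcal{L}$ and $\Psi$ is the one generated by $\mathcal{L}_T$. Here the quadratic term is absorbed because the identity $\Upsilon^\top\cR^{-1}\Upsilon-\Upsilon_T^\top\cR^{-1}\Upsilon_T$ factorizes as $\Upsilon_\Delta^\top\cR^{-1}\Upsilon+\Upsilon_T^\top\cR^{-1}\Upsilon_\Delta$. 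Consequently $\|\Phi(r,t)\|_{op}\le Ke^{-\lambda(r-t)}$ by \eqref{eq:operator_uniform_stable}.

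\textbf{Step 3: stability of $\mathcal{L}_T$ (main obstacle).} The hard part is a bound $\|\Psi(r,t)\|_{op}\le K$, or with mild growth, uniformly in $T$. I plan to get it from the finite-horizon cost. For the homogeneous problem with $b=\sigma=0$, the fundamental relation \eqref{eq:fundamental_relation} started at time $t$ shows that the optimal closed-loop mean $\bar X_T$, which evolves by $\Psi$, satisfies
\[
c_0\int_t^T\|\bar X_T(r)\|^2_{L^2}\,dr\le \mathbb{V}_T(t,\xi)\le \mathbb{V}(\xi)\le K\|\bar\xi\|^2 .
\]
This uses $Q+\bar Q-S^\top R^{-1}S>0$, the positivity of $\wt QG$, and the comparison $\mathbb V_T\le\mathbb V$, which follows since the ergodic feedback is admissible and $\mathbb V\ge0$ by Assumption \ref{a:solvability_Lambda_p}. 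Together with $\sup_r\|\mathcal{L}_T(r)\|_{op}\le K$, a Datko-type argument for evolution families then gives $\|\Psi(r,t)\|_{op}\le K$, and even exponential decay. Plugging everything into the representation gives $\|\bm\Delta_\Lambda(t)\|\le Ke^{-\lambda(T-t)}$.

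To pass to the $L^2(I\times I)$ norm, note that the integrand is an HS kernel ($\mathcal E'$ is HS) sandwiched between bounded operators. The remaining term $\Phi^*\bm\Lambda\Psi$ is HS with norm at most $\|\Lambda\|_{L^2}\|\Phi\|\|\Psi\|$. This yields \eqref{eq:exponential_estimate_Lambda}.

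\textbf{Step 4: the feedback gains.} We have $\wt\Theta^*-\wt\Theta_T^*=-(\cR(P)^{-1}-\cR(P_T)^{-1})\Upsilon-\cR(P_T)^{-1}(B^\top\Delta_\Lambda+D^\top\Delta_P\wt CG)$. Each piece is bounded by $Ke^{-\lambda(T-t)}$ using Lemma \ref{l:exponential_estimates_P_Pi}, \eqref{eq:exponential_estimate_Lambda}, and $G\in L^2$, which gives \eqref{eq:exponential_estimate_tilde_Theta}.
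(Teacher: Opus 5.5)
Your argument is sound and takes a genuinely different route from the paper's.

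\textbf{The paper's route.} The paper keeps the ergodic closed-loop operator on both sides (its $\wt{\mathcal{L}}$, built from $\mathcal{L}$), so a quadratic remainder $f_3$ of size $K_4\|\Delta_\Lambda\|_{L^2}^2$ survives. To control it, the paper uses the convergence part of Assumption~\ref{a:solvability_Lambda_p} ($\Lambda_T(0)\to\Lambda$) to make $\|\Delta_\Lambda(k)\|_{L^2}=\|\Lambda-\Lambda_{T-k}(0)\|_{L^2}$ small once $T-k\ge N$. It then closes with the Riccati-type comparison $h\le \beta/2+\int_0^t e^{-\beta s}h^2\,ds$. The remaining cases $t\in[k,T]$ and $T<2N$ are handled by continuity of $s\mapsto \Lambda_s(0)$.

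\textbf{Your route.} Your mixed factorization puts $\mathcal{L}$ on one side and $\mathcal{L}_T(\cdot)$ on the other. This makes the difference equation exactly linear, so no smallness is needed. The price is a $T$-uniform bound on the evolution family of $\mathcal{L}_T(\cdot)$. You correctly obtain it from $c_0\int_t^T\|\bar X_T(r)\|_{L^2}^2\,dr\le \mathbb{V}_T(t,\xi)\le \mathbb{V}(\xi)\le K\|\bar\xi\|_{L^2}^2$ together with the elementary Datko argument. That argument also needs $\sup_{T,t}\|\bm{\Lambda}_T(t)\|_{op}<\infty$, which the same two-sided cost comparison provides.

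\textbf{What each buys.} Your route never uses the convergence hypothesis in Assumption~\ref{a:solvability_Lambda_p}. Only $\bm{\Lambda}\ge 0$ and the stability of $\mathcal{L}$ are needed, and $\Lambda_T(t)\to\Lambda$ comes out as a consequence. The paper's route, in exchange, only ever works with the time-invariant semigroup $e^{t\wt{\mathcal{L}}}$.

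\textbf{Two small repairs.}
\begin{enumerate}
\item Step 2 of Proposition~\ref{p:solvability_system_finite}, even with $T$-independent constants, bounds $\|\bm{\Lambda}_T(t)\|_{op}$, not $\|\Lambda_T(t)\|_{L^2(I\times I)}$. After the multiplication-operator parts cancel, your remainder still contains Hilbert--Schmidt pieces such as $\Upsilon_T^*\cR(P_T(t))^{-1}\big(\wh{\cM}(P,\Pi)-\wh{\cM}(P_T(t),\Pi_T(t))\big)$. Their $L^2(I\times I)$ norm requires a bound on $\|\Upsilon_T\|_{L^2}$, which you do not have. The fix is to substitute $\Lambda_T=\Lambda-\Delta_\Lambda$, which gives $\|\mathcal{E}'(r)\|_{L^2}\le Ke^{-\lambda(T-r)}\big(1+\|\Delta_\Lambda(r)\|_{L^2}\big)$. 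Then close with the same linear Gronwall argument used for $p$ in Proposition~\ref{p:exponential_estimates_p}.
\item In your Step 1 display, the quadratic term should enter $\dot{\bm{\Delta}}_\Lambda$ with a minus sign (compare $f_3$ in the paper). This is harmless, since you then switch to the mixed form.
\end{enumerate}
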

\begin{proof}
From the equations \eqref{eq:ODE_Lambda_T} and \eqref{eq:Riccati_Lambda_ergodic}, $\Delta_{\Lambda}$ satisfies the following differential equation:
\begin{equation*}
\begin{cases}
\dot{\Delta}_{\Lambda}(t)(u,v) + F^{\infty}(\Lambda)(u,v) - F(t, \Lambda_T(t))(u,v) = 0, \\
\Delta_{\Lambda}(T)(u,v) = \Lambda(u,v),
\end{cases} 
\end{equation*}
for all $t \in [0, T]$ and a.e. $u, v \in I$. Equivalently, the differential equation for $\Delta_{\Lambda}$ can be written explicitly as follows:
\begin{equation*}
\begin{aligned}
& \dot{\Delta}_{\Lambda}(t)(u,v) + \Psi_1(P, \Pi, \Lambda)(u,v) - \Psi_1(P_T(t), \Pi_T(t), \Lambda_T(t))(u,v)  \\
&\hspace{0.3in} + \widehat{\mathcal{M}}(P_T(t), \Pi_T(t))^\top \mathcal{R}(P_T(t))^{-1} \Upsilon(P_T(t), \Lambda_T(t))(u,v) \\
&\hspace{0.3in} - \widehat{\mathcal{M}}(P, \Pi)^\top \mathcal{R}(P)^{-1} \Upsilon(P, \Lambda)(u,v) \\
&\hspace{0.3in} + \Upsilon(P_T(t), \Lambda_T(t))(v,u)^\top \mathcal{R}(P_T(t))^{-1} \widehat{\mathcal{M}}(P_T(t), \Pi_T(t)) \\
&\hspace{0.3in} - \Upsilon(P, \Lambda)(v,u)^\top \mathcal{R}(P)^{-1} \widehat{\mathcal{M}}(P, \Pi) \\
&\hspace{0.3in} + \int_I \Upsilon(P_T(t), \Lambda_T(t))(w,u)^\top \mathcal{R}(P_T(t))^{-1} \Upsilon(P_T(t), \Lambda_T(t))(w,v) dw \\
&\hspace{0.3in} - \int_I \Upsilon(P, \Lambda)(w,u)^\top \mathcal{R}(P)^{-1} \Upsilon(P, \Lambda)(w,v) dw = 0. 
\end{aligned}
\end{equation*}
Next, by expanding each term in the differential equation satisfied by $\Delta_{\Lambda}$, we obtain
\begin{equation*}
\begin{aligned}
& \dot{\Delta}_{\Lambda}(t)(u,v) + (A + \bar{A} + B\bar{\Theta}^*)^\top \Delta_{\Lambda}(t)(u,v) + \Delta_{\Lambda}(t)(v,u)^\top (A + \bar{A} + B\bar{\Theta}^*) \\
& \hspace{0.3in} + \int_I \big(\wt{A}G(u,w) + B\wt{\Theta}^*(u,w) \big)^\top \Delta_{\Lambda}(t)(w,v) dw + \int_I \Delta_{\Lambda}(t)(w,u)^\top \big(\wt{A}G(w,v) + B\wt{\Theta}^*(w,v) \big) dw \\
& \hspace{0.3in} + f_1(\Delta_P(t), \Delta_{\bar{\Pi}}(t), \Delta_{\bar{\Theta}^*}(t))(u,v) + f_2(\Delta_P(t))(u,v) + f_3(\Delta_P(t), \Delta_{\Lambda}(t), \Delta_{\bar{\Theta}^*}(t))(u,v) = 0
\end{aligned}
\end{equation*}
with the terminal condition $\Delta_{\Lambda}(T)(u,v) = \Lambda(u,v)$,
where $\Delta_{\bar{\Theta}^*}(t) = \bar{\Theta}^* - \bar{\Theta}^*_T(t)$, 
\begin{equation*}
\begin{aligned}
& f_1(\Delta_P(t), \Delta_{\bar{\Pi}}(t), \Delta_{\bar{\Theta}^*}(t))(u,v) := \Delta_{\bar{\Pi}}(t) \wt{A} G(u,v) + \wt{A}^\top \Delta_{\bar{\Pi}}(t) G(v,u) + \Delta_{\bar{\Theta}^*}(t)^\top \Upsilon(P, \Lambda)(u,v) \\
& \hspace{0.3in} + \Upsilon(P, \Lambda)(v,u)^\top \Delta_{\bar{\Theta}^*}(t) - \Delta_{\bar{\Theta}^*}(t)^\top D^\top \Delta_P(t) \wt{C} G(u, v) - \wt{C}^\top G(v,u) \Delta_P(t) D \Delta_{\bar{\Theta}^*}(t), 
\end{aligned}
\end{equation*}
\begin{equation*}
\begin{aligned}
& f_2(\Delta_P(t))(u,v) := (C + \bar{C} + D\bar{\Theta}^*)^\top \Delta_P(t) \wt{C} G(u,v) + \wt{C}^\top G(v,u) \Delta_P(t) (C + \bar{C} + D\bar{\Theta}^*) \\
& \hspace{0.3in} + \int_I (\wt{C}G(u,w) + D\wt{\Theta}^*(u,w))^\top \Delta_P(t) (\wt{C}G(w,v) + D\wt{\Theta}^*(w,v)) dw \\
& \hspace{0.3in} + \int_I \big\{ \wt{C}^\top G(w,u) \Delta_P(t) D \mathcal{R}(P)^{-1} D^\top \Delta_P(t) \wt{C} G(w,v) \\
&\hspace{0.7in} - \Upsilon(P, \Lambda)(w,u)^\top \mathcal{R}(P)^{-1} D^\top \Delta_P(t) D \mathcal{R}(P_T(t))^{-1} D^\top \Delta_P(t) \wt{C} G(w,v) \\
&\hspace{0.7in} - \wt{C}^\top G(w,u) \Delta_P(t) D \mathcal{R}(P_T(t))^{-1} D^\top \Delta_P(t) D \mathcal{R}(P)^{-1} \Upsilon(P, \Lambda)(w,v) \\
&\hspace{0.7in} + \wt{C}^\top G(w,u) \Delta_P(t) D \mathcal{R}(P_T(t))^{-1} D^\top \Delta_P(t) D \mathcal{R}(P)^{-1} D^\top \Delta_P(t) \wt{C} G(w,v) \big\} dw, 
\end{aligned}
\end{equation*}
and
\begin{equation*}
\begin{aligned}
& f_3(\Delta_P(t), \Delta_{\Lambda}(t), \Delta_{\bar{\Theta}^*}(t))(u,v) := - \Delta_{\bar{\Theta}^*}(t)^\top B^\top \Delta_{\Lambda}(t)(u, v) - \Delta_{\Lambda}(t)(v,u)^\top B \Delta_{\bar{\Theta}^*}(t) \\
& \hspace{0.3in} + \int_I \big\{ \Delta_{\Lambda}(t)(w,u)^\top B \mathcal{R}(P)^{-1} B^\top \Delta_{\Lambda}(t)(w,v) \\
&\hspace{0.7in} + \Delta_{\Lambda}(t)(w,u)^\top B \mathcal{R}(P)^{-1} D^\top \Delta_P(t) \wt{C} G(w,v) \\
&\hspace{0.7in} + \wt{C}^\top G(w,u) \Delta_P(t) D \mathcal{R}(P)^{-1} B^\top \Delta_{\Lambda}(t)(w,v) \\
&\hspace{0.7in} - \Upsilon(P, \Lambda)(w,u)^\top \mathcal{R}(P)^{-1} D^\top \Delta_P(t) D \mathcal{R}(P_T(t))^{-1} B^\top \Delta_{\Lambda}(t)(w,v) \\
&\hspace{0.7in} - \Delta_{\Lambda}(t)(w,v)^\top B \mathcal{R}(P_T(t))^{-1} D^\top \Delta_P(t) D \mathcal{R}(P)^{-1} \Upsilon(P, \Lambda)(w,v) \\
&\hspace{0.7in} + \Delta_{\Lambda}(t)(w,u)^\top B \mathcal{R}(P_T(t))^{-1} D^\top \Delta_P(t) D \mathcal{R}(P)^{-1} B^\top \Delta_{\Lambda}(t)(w,v) \\
&\hspace{0.7in} + \Delta_{\Lambda}(t)(w,u)^\top B \mathcal{R}(P_T(t))^{-1} D^\top \Delta_P(t) D \mathcal{R}(P)^{-1} D^\top \Delta_P(t) \wt{C} G(w,v) \\
&\hspace{0.7in} + \wt{C}^\top  G(w,u) \Delta_P(t) D \mathcal{R}(P_T(t))^{-1} D^\top \Delta_P(t) D \mathcal{R}(P)^{-1} B^\top \Delta_{\Lambda}(t)(w,v) \big\} dw.
\end{aligned}
\end{equation*}

Because $G \in L^2(I \times I; \mathbb{R})$, the estimates \eqref{eq:exponential_estimate_P_Pi} and \eqref{eq:exponential_estimate_Theta} in Lemma \ref{l:exponential_estimates_P_Pi} imply
\begin{equation*}
\|f_1(\Delta_P(t), \Delta_{\bar{\Pi}}(t), \Delta_{\bar{\Theta}^*}(t))\|_{L^2} \leq K_1 e^{-\lambda_1(T-t)}, \quad \forall t \in [0, T]
\end{equation*}
for some constants $K_1, \lambda_1 > 0$ that are independent of $T$. Similarly, since $\|\Delta_P(t)\| \le K$, we have $\|\Delta_P(t)\|^2 \le K\|\Delta_P(t)\|$ for all $t \in [0, T]$. It follows that
\begin{equation*}
\|f_2(\Delta_P(t))\|_{L^2} \leq K_2 e^{-\lambda_2(T-t)}, \quad \forall t \in [0, T]
\end{equation*}
for some constants $K_2, \lambda_2 > 0$, independent of $T$. Next, for $f_3(\Delta_P(t), \Delta_{\Lambda}(t), \Delta_{\bar{\Theta}^*}(t))$, it is clear that
\begin{equation*}
\begin{aligned}
\|f_3(\Delta_P(t), \Delta_{\Lambda}(t), \Delta_{\bar{\Theta}^*}(t))\|_{L^2} &\leq K \big(\|\Delta_P(t)\|^2 + \|\Delta_{\Lambda}(t)\|^2_{L^2} \big) \\
&\leq K_3 e^{-\lambda(T-t)} + K_4 \|\Delta_{\Lambda}(t)\|^2_{L^2}, \quad \forall t \in [0, T]
\end{aligned}
\end{equation*}
for some constants $K_3, K_4 > 0$, both independent of $T$. 

For $\Lambda \in L^2(I \times I; \mathbb{R}^{d \times d})$, we define the following operator 
\begin{equation*}
\begin{aligned}
\wt{\mathcal{L}}(\Lambda)(u,v) &= (A + \bar{A} + B\bar{\Theta}^*)^\top \Lambda(u,v) + \Lambda(v,u)^\top (A + \bar{A} + B\bar{\Theta}^*) \\
& \hspace{0.3in} + \int_I (\wt{A}G(u,w) + B\wt{\Theta}^*(u,w))^\top \Lambda(w,v) dw \\
& \hspace{0.3in} + \int_I \Lambda(w,u)^\top (\wt{A}G(w,v) + B\wt{\Theta}^*(w,v)) dw,
\end{aligned}
\end{equation*}
which can also be defined in terms of the operator $\mathcal{L}$ in \eqref{eq:linear_operator} and its adjoint $\mathcal{L}^*$. From Assumption \ref{a:stabilizability} and the estimate for $\mathcal{L}$ in \eqref{eq:operator_uniform_stable}, we also have
\begin{equation}
\label{eq:operator_uniform_stable_2}
\|e^{t\wt{\mathcal{L}}}\|_{op} \leq \alpha_1 e^{-\beta_1 t}, \quad \forall t \geq 0,
\end{equation}
for some $\alpha_1, \beta_1 > 0$ independent of $T$, where $\{e^{t\wt{\mathcal{L}}}\}_{t \geq 0}$ is the semigroup generated by $\wt{\mathcal{L}}$. Then,
for $0 \leq t \leq k \leq T$,
\begin{equation*}
\begin{aligned}
\Delta_{\Lambda}(t) &= e^{(k-t)\wt{\mathcal{L}}} \Delta_{\Lambda}(k) + \int_t^k e^{(s-t)\wt{\mathcal{L}}} \big\{ f_1(\Delta_P(s), \Delta_{\bar{\Pi}}(s), \Delta_{\bar{\Theta}^*}(s)) \\
& \hspace{0.3in} + f_2(\Delta_P(s)) + f_3(\Delta_P(s), \Delta_{\Lambda}(s), \Delta_{\bar{\Theta}^*}(s)) \big\} ds.
\end{aligned}
\end{equation*}
Hence, we derive the following estimate:
\begin{equation*}
\begin{aligned}
\|\Delta_{\Lambda}(t)\|_{L^2} &\leq \|e^{(k-t)\wt{\mathcal{L}}}\|_{op} \|\Delta_{\Lambda}(k)\|_{L^2} + \int_t^k \|e^{(s-t)\wt{\mathcal{L}}}\|_{op} \big\{ \|f_1(\Delta_P(s), \Delta_{\bar{\Pi}}(s), \Delta_{\bar{\Theta}^*}(s))\|_{L^2} \\
& \hspace{0.3in} + \|f_2(\Delta_P(s))\|_{L^2} + \|f_3(\Delta_P(s), \Delta_{\Lambda}(s), \Delta_{\bar{\Theta}^*}(s))\|_{L^2} \big\} ds.
\end{aligned}
\end{equation*}
From the above estimates, we can choose $\alpha, \beta > 0$ such that $\|e^{t\wt{\mathcal{L}}}\|_{op} \leq \alpha e^{-\beta t}$ and
\begin{equation*}
\begin{aligned}
& \|f_1(\Delta_P(s), \Delta_{\bar{\Pi}}(s), \Delta_{\bar{\Theta}^*}(s))\|_{L^2} + \|f_2(\Delta_P(s))\|_{L^2} + \|f_3(\Delta_P(s), \Delta_{\Lambda}(s), \Delta_{\bar{\Theta}^*}(s))\|_{L^2} \\
\leq \ & \alpha e^{-2\beta(T-s)} + K_4 \|\Delta_{\Lambda}(s)\|^2_{L^2}
\end{aligned}
\end{equation*}
for all $0 \leq s , t \leq T$. Therefore, for $0 \leq t \leq k < T$, we obtain
\begin{equation*}
\begin{aligned}
\|\Delta_{\Lambda}(t)\|_{L^2} &\leq \alpha e^{-\beta(k-t)} \|\Delta_{\Lambda}(k)\|_{L^2} + \int_t^k \alpha e^{-\beta(s-t)} \big(\alpha e^{-2\beta(T-s)} + K_4 \|\Delta_{\Lambda}(s)\|^2_{L^2} \big) ds \\
&\leq \Big(\alpha \|\Delta_{\Lambda}(k)\|_{L^2} + \frac{\alpha^2}{\beta} e^{-2\beta(T-k)}\Big)e^{-\beta(k-t)} + \alpha K_4 \int_t^k e^{-\beta(s-t)} \|\Delta_{\Lambda}(s)\|^2_{L^2} ds.
\end{aligned}
\end{equation*}

By Assumption \ref{a:solvability_Lambda_p}, for each fixed $t \geq 0$, $\Lambda_T(t)$ converges to $\Lambda$ in $L^2(I \times I; \mathbb{R}^{d \times d})$ as $T \to \infty$. Specifically, 
$\lim_{T \to \infty} \|\Delta_{\Lambda}(0)\|_{L^2} = \lim_{T\to\infty}\|\Lambda - \Lambda_T(0)\|_{L^2} = 0$. Thus, we can choose $N > 0$ such that
$$\max \Big\{\alpha \|\Lambda - \Lambda
_T(0)\|_{L^2}, \frac{\alpha^2}{\beta}e^{-2\beta N} \Big\} \leq \frac{\beta}{4\alpha K_4} := \frac{\rho}{2}, \quad \forall T \geq N.$$
We claim that for any $T \geq 2N$, the inequality
\begin{equation*}
\|\Delta_{\Lambda}(t)\|_{L^2} \leq K e^{-\lambda(T-t)}, \quad \forall t \in [0, T]
\end{equation*}
holds for some constants $K, \lambda > 0$, independent of $T$. Let $T \geq 2N$ be fixed but arbitrary, and let $k \geq N$ be the integer such that $N + k \leq T < N + k + 1$. Then, since $\|\Delta_{\Lambda}(k)\|_{L^2} = \|\Lambda - \Lambda_{T-k}(0)\|_{L^2}$, we have
$$\alpha \|\Delta_{\Lambda}(k)\|_{L^2} + \frac{\alpha^2}{\beta} e^{-2\beta(T-k)} \leq \rho,$$
which yields that, for all $0\leq t \leq k$,
\begin{equation*}
\|\Delta_{\Lambda}(t)\|_{L^2} \le \rho e^{-\beta(k-t)} + \alpha K_4 \int_t^k e^{-\beta(s-t)} \|\Delta_{\Lambda}(s)\|^2_{L^2} ds.
\end{equation*}
Set $K_5 = \alpha K_4$ and $h(t) = K_5 e^{\beta t} \|\Delta_{\Lambda}(k-t)\|_{L^2}$  for $0 \leq t \leq k$. Then, we derive the following inequality:
\begin{equation*}
h(t) \le \rho K_5 + \int_0^t e^{-\beta s} h^2(s) ds = \frac{\beta}{2} + \int_0^t e^{-\beta s} h^2(s) ds := \frac{\beta}{2} + H(t).
\end{equation*}
Thus, $H(0) = 0$ and $e^{\beta t} \frac{d}{dt}H(t) = h^2(t) \leq (\beta/2 + H(t))^2$, or equivalently,
\begin{equation*}
\frac{d}{dt}\bigg(\frac{-1}{\frac{\beta}{2}+H(t)}\bigg) = \frac{\frac{d}{dt}H(t)}{\big(\frac{\beta}{2}+H(t) \big)^2} \leq e^{-\beta t}.
\end{equation*}
It implies that 
$$\frac{2}{\beta}-\frac{1}{\frac{\beta}{2}+H(t)}\leq \int_0^t e^{-\beta s} ds \leq \frac{1}{\beta}, \quad \forall t \in [0, k].$$
Hence, we obtain $h(t) \leq \frac{\beta}{2} + H(t) \leq \beta$ for all $t \in [0, k]$. Therefore, since $T - k < N+1$, we deduce the following estimate:
\begin{equation*}
\begin{aligned}
\|\Delta_{\Lambda}(t)\|_{L^2} &= \frac{1}{K_5} e^{-\beta(k-t)} h(k-t) \leq \frac{\beta}{K_5}e^{\beta(T-k)} e^{-\beta(T-t)} \\
&\leq \frac{\beta}{K_5}e^{\beta(N+1)} e^{-\beta(T-t)} \leq K e^{-\beta(T-t)}, \quad \forall t \in [0, k].
\end{aligned}
\end{equation*}
For $t \in [k, T]$, we have $0 \leq T-t \leq T-k < N+1$. Thus, it holds that
\begin{equation*}
\begin{aligned}
\|\Delta_{\Lambda}(t)\|_{L^2} &= \|\Lambda - \Lambda_T(t)\|_{L^2} = \|\Lambda - \Lambda_{T-t}(0)\|_{L^2} \leq \max_{s \in [0, N+1]} \|\Lambda - \Lambda_{s}(0)\|_{L^2} := K_6 \\
&\leq K_6 e^{\beta(N+1)} e^{-\beta(T-t)} \leq K e^{-\beta(T-t)}, \quad \forall t \in [k, T].
\end{aligned}
\end{equation*}

It remains to show that the estimate $\|\Delta_{\Lambda}(t)\|_{L^2} \leq K e^{-\lambda(T-t)}$ for all $t \in [0,T]$ also holds when $T < 2N$. Note that
\begin{equation*}
\begin{aligned}
\|\Delta_{\Lambda}(t)\|_{L^2} &=  \|\Lambda - \Lambda_{T-t}(0)\|_{L^2} \leq \max_{s \in [0, 2N]} \|\Lambda - \Lambda_{s}(0)\|_{L^2} := K_7 \\
&\leq K_7 e^{2N\beta} e^{-\beta(T-t)} \leq K e^{-\beta(T-t)},
\end{aligned}
\end{equation*}
for all $0 \le t \leq T \leq 2N$. Therefore, we obtain
\begin{equation*}
\|\Delta_{\Lambda}(t)\|_{L^2} = \|\Lambda - \Lambda_T(t)\|_{L^2} \le K e^{-\lambda(T-t)}, \quad \forall t \in [0, T],
\end{equation*}
for some constants $K, \lambda > 0$, independent of $T$.

Next, by the definition of $\wt{\Theta}^*_T$ and $\wt{\Theta}^*$, it is clear that
\begin{equation*}
\begin{aligned}
\|\wt{\Theta}^* - \wt{\Theta}^*_T(t)\|_{L^2} &= \|\cR(P_T(t))^{-1}
\Upsilon(P_T(t), \Lambda_T(t)) - \cR(P)^{-1}
\Upsilon(P, \Lambda)\|_{L^2} \\
&\leq \|\cR(P_T(t))^{-1} (\Upsilon(P_T(t), \Lambda_T(t)) -
\Upsilon(P, \Lambda))\|_{L^2}  \\
& \hspace{0.3in} + \|(\cR(P_T(t))^{-1} - \cR(P)^{-1}) \Upsilon(P, \Lambda) \|_{L^2} \\
& \leq \|\cR(P_T(t))^{-1} \| \big(\|B\| \|\Lambda_T(t) - \Lambda\|_{L^2} + \|D^\top(P_T(t) - P) C\| \|G\|_{L^2} \big) \\
& \hspace{0.3in} + \|\cR(P_T(t))^{-1} D^\top(P - P_T(t)) D \cR(P)^{-1}\| \|\Upsilon(P, \Lambda) \|_{L^2}.
\end{aligned}
\end{equation*}
Then, the desired estimate \eqref{eq:exponential_estimate_tilde_Theta} follows directly from \eqref{eq:exponential_estimate_P_Pi} and \eqref{eq:exponential_estimate_Lambda}.
\end{proof}

We next establish an exponential estimate relating the solution $p_T$ of the differential equation \eqref{eq:ODE_p_T} to the solution $p$ of the algebraic equation \eqref{eq:Riccati_p_ergodic}.

\begin{proposition}
\label{p:exponential_estimates_p}
Suppose that Assumptions \ref{a:coefficients_solvability_finite}, \ref{a:stabilizability}, and \ref{a:solvability_Lambda_p} hold. Let $p_T$ and $p$ be the unique solutions to \eqref{eq:ODE_p_T} and \eqref{eq:Riccati_p_ergodic}, respectively. Then, there exist constants $K, \lambda > 0$, independent of $T$, such that
\begin{equation}
\label{eq:exponential_estimate_p}
\|p - p_T(t)\|_{L^2(I; \mathbb R^{d})} \leq K e^{-\lambda(T-t)}, \quad \forall t \in [0, T].
\end{equation}
Furthermore, there exist constants $K, \lambda > 0$, independent of $T$, such that
\begin{equation}
\label{eq:exponential_estimate_theta}
\|\theta^* - \theta^*_T(t)\|_{L^2(I; \mathbb R^{m})} \leq K e^{-\lambda(T-t)}, \quad \forall t \in [0, T].
\end{equation}
\end{proposition}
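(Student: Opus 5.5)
The plan is to mimic the proof of Proposition~\ref{p:exponential_estimates_Lambda}. The simplification here is that the equation for $p$ is linear, so no nonlinear Gr\"onwall/bootstrap argument is needed. Set $\Delta_p(t) := p - p_T(t)$. Subtracting \eqref{eq:ODE_p_T} from \eqref{eq:Riccati_p_ergodic} gives
\begin{equation*}
\dot{\Delta}_p(t) + \wt{F}^{\infty}(p) - \wt{F}(t, p_T(t)) = 0, \qquad \Delta_p(T) = p.
\end{equation*}
I would first expand $\wt{F}^{\infty}(p) - \wt{F}(t,p_T(t))$. Adding and subtracting the ergodic coefficients, the terms linear in $p$ collect into
\begin{equation*}
(A+\bar A)^\top p(u) + \wt A^\top\!\int_I G(v,u)p(v)dv + \bar\Theta^{*\top}B^\top p(u) + \int_I \wt\Theta^*(v,u)^\top B^\top p(v)\,dv .
\end{equation*}
Using $\bar\Theta^* = -\cR(P)^{-1}\wh\cM(P,\Pi)$ and $\wt\Theta^* = -\cR(P)^{-1}\Upsilon(P,\Lambda)$, this is exactly $\mathcal{L}^*(p)(u)$, where $\mathcal{L}^*$ is the adjoint of the operator $\mathcal{L}$ in \eqref{eq:linear_operator}. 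Hence
\begin{equation*}
\dot{\Delta}_p(t) + \mathcal{L}^*\Delta_p(t) + g(t) = 0 .
\end{equation*}
The forcing term $g(t)$ collects every difference in which a coefficient gap appears. These gaps are $\Delta_P(t)$, $\Delta_{\bar\Pi}(t)$, $\Delta_\Lambda(t)$, $\bar\Theta^*-\bar\Theta^*_T(t)$ and $\wt\Theta^*-\wt\Theta^*_T(t)$. Each gap multiplies either a fixed object ($b$, $\sigma$, $G$, $\Lambda$, $p$) or $p_T(t)$.

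The second step is to bound $\|g(t)\|_{L^2}\le K\big(1+\|p_T(t)\|_{L^2}\big)e^{-\lambda(T-t)}$. This follows from Lemma~\ref{l:exponential_estimates_P_Pi}, Proposition~\ref{p:exponential_estimates_Lambda}, $G\in L^2(I\times I)$, the bound $\|\cR(P_T(t))^{-1}\|\le\|R^{-1}\|$, and the Hilbert--Schmidt bound \eqref{eq:Hilbert_Schmidt_operator}. The term $\frac12\int_I(\Lambda(u,v)+\Lambda(v,u)^\top)b\,dv$ is handled via \eqref{eq:Lambda_T_symmetric}. To make the bound uniform, I would write $p_T = p-\Delta_p$, so that
\begin{equation*}
\|g(t)\|_{L^2}\le K e^{-\lambda(T-t)}\big(1+\|\Delta_p(t)\|_{L^2}\big).
\end{equation*}
Because $p_T$ enters $g$ only linearly and multiplied by an exponentially small factor, no quadratic term appears.

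The third step uses Duhamel's formula with the semigroup of $\mathcal{L}^*$. We have $\|e^{t\mathcal{L}^*}\|_{op}=\|e^{t\mathcal{L}}\|_{op}\le Ke^{-\lambda t}$ by \eqref{eq:operator_uniform_stable}. Then
\begin{equation*}
\Delta_p(t)=e^{(T-t)\mathcal{L}^*}p+\int_t^T e^{(s-t)\mathcal{L}^*}g(s)\,ds .
\end{equation*}
Setting $y(t)=e^{\lambda'(T-t)}\|\Delta_p(t)\|_{L^2}$ with $\lambda'<\lambda$ chosen small, I get
\begin{equation*}
y(t)\le K + K\int_t^T e^{-c(T-s)}\,y(s)\,ds + K\int_t^T e^{-c(T-s)}\,ds
\end{equation*}
for some $c>0$. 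The backward Gr\"onwall inequality then gives $y\le K\exp\!\big(K\int_0^\infty e^{-cr}dr\big)$, which is uniform in $T$. This yields \eqref{eq:exponential_estimate_p}.

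This third step is the main obstacle: the constants must be chosen so the exponential rates are compatible and the Gr\"onwall kernel is integrable uniformly in $T$. A cruder alternative I would try first is to prove $\sup_{t,T}\|p_T(t)\|_{L^2}\le K$ directly from the same Duhamel formula for $p_T$ and then conclude by direct integration. Finally, \eqref{eq:exponential_estimate_theta} follows exactly as \eqref{eq:exponential_estimate_tilde_Theta} was obtained. Indeed,
\begin{equation*}
\theta^*-\theta^*_T(t)=\cR(P_T(t))^{-1}\Gamma(P_T(t),p_T(t))-\cR(P)^{-1}\Gamma(P,p),
\end{equation*}
and the right-hand side is bounded by $K\big(\|\Delta_P(t)\|+\|\Delta_p(t)\|_{L^2}\big)$ using the resolvent identity for $\cR^{-1}$.
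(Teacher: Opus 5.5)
Your proposal is correct and follows essentially the paper's argument. The paper also rewrites the equation for $\Delta_p = p - p_T$ as $\dot{\Delta}_p + \mathcal{L}^*\Delta_p$ plus an exponentially small perturbation and forcing, applies the Duhamel formula with the exponentially stable semigroup $e^{t\mathcal{L}^*}$, and closes with a Gr\"onwall argument on $e^{\lambda(T-t)}\|\Delta_p(t)\|_{L^2}$ (taking the coefficient gaps to decay at rate $2\lambda$, which plays the role of your $\lambda'<\lambda$); it then gets the $\theta^*$ bound exactly as for $\wt{\Theta}^*$. The difference is cosmetic: the paper puts the perturbation directly on $\Delta_p$ via $(\mathcal{L}^*-\mathcal{L}_T^*(t))\Delta_p$ and the fixed $p$ (through $\Gamma(P,p)$) into $f_4$, whereas you keep $p_T$ in $g$ and substitute $p_T = p - \Delta_p$, which gives the same decomposition.
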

\begin{proof}
Denote $\Delta_{p}(t) = p - p_T(t)$ for all $t \in [0, T]$. Then, $\Delta_p$ satisfies the following differential equation:
\begin{equation*}
\begin{cases}
\dot{\Delta}_p(t)(u) + \wt{F}^{\infty}(p)(u) - \wt{F}(t, p_T(t))(u) = 0, \\
\Delta_{p}(T)(u) = p(u),
\end{cases}
\end{equation*}
for all $t \in [0, T]$ and a.e. $u \in I$. From the definition of $\wt{F}$ and $\wt{F}^{\infty}$, we obtain
\begin{equation*}
\begin{aligned}
& \dot{\Delta}_p(t)(u) + \Psi_2(P, \Pi, \Lambda, p)(u) - \Psi_2(P_T(t), \Pi_T(t), \Lambda_T(t), p_T(t))(u) \\
& \hspace{0.3in} - \wh{\cM}(P, \Pi)^\top \cR(P)^{-1} \Gamma(P, p)(u) + \wh{\cM}(P_T(t), \Pi_T(t))^\top \cR(P_T(t))^{-1} \Gamma(P_T(t), p_T(t))(u) \\
& \hspace{0.3in}  - \int_I \Upsilon(P, \Lambda)(v, u)^\top \cR(P)^{-1} \Gamma(P, p)(v) dv  \\
& \hspace{0.3in} + \int_I \Upsilon(P_T(t), \Lambda_T(t))(v, u)^\top \cR(P_T(t))^{-1} \Gamma(P_T(t), p_T(t))(v) dv = 0,
\end{aligned}
\end{equation*}
which is equivalent to 
\begin{equation*}
\begin{aligned}
& \dot{\Delta}_p(t)(u) + (A + \bar{A} + B\bar{\Theta}_T^*(t))^\top \Delta_{p}(t)(u) + \int_I \big(\wt{A}G(v,u) + B\wt{\Theta}_T^*(t)(v,u)\big)^\top \Delta_{p}(t)(v) dv \\
& \hspace{0.3in} + f_4(\Delta_P(t), \Delta_{\bar{\Pi}}(t), \Delta_{\Lambda}(t))(u),
\end{aligned}
\end{equation*}
where
\begin{equation*}
\begin{aligned}
& f_4(\Delta_P(t), \Delta_{\bar{\Pi}}(t), \Delta_{\Lambda}(t))(u) \\
= \ & \Delta_{\bar{\Pi}}(t) b + (C + \bar{C})^\top \Delta_P(t) \sigma + \wt{C}^\top \int_I G(v, u) \Delta_P(t) \sigma dv + \frac{1}{2} \int_I \big(\Delta_{\Lambda}(u, v) + \Delta_{\Lambda}(v, u)^\top \big) b dv \\
&- \big(B^\top \Delta_{\bar{\Pi}}(t) + D^\top \Delta_P(t)(C + \bar{C})\big)^\top \mathcal{R}(P)^{-1} \Gamma(P, p)(u) \\
&+ \widehat{\mathcal{M}}(P_T(t), \Pi_T(t))^\top \mathcal{R}(P)^{-1} D^\top \Delta_P(t) D\Gamma(P, p)(u) - \widehat{\mathcal{M}}(P_T(t), \Pi_T(t))^\top \mathcal{R}(P_T(t))^{-1} D^\top \Delta_P(t) \sigma \\
&- \int_I \Delta_{\Lambda}(v, u)^\top B \mathcal{R}(P)^{-1} \Gamma(P, p)(v) dv - \int_I \wt{C}^\top \Delta_P(t) D G(u, v) \mathcal{R}(P)^{-1} \Gamma(P, p)(v) dv \\
&+ \int_I \Upsilon(P_T(t), \Lambda_T(t))(v, u)^\top \mathcal{R}(P)^{-1} D^\top \Delta_P(t) D \mathcal{R}(P_T(t))^{-1} \Gamma(P, p)(v) dv \\
&- \int_I \Upsilon(P_T(t), \Lambda_T(t))(v, u)^\top \mathcal{R}(P_T(t))^{-1} D^\top \Delta_P(t) \sigma dv.
\end{aligned}
\end{equation*}
From the estimates in Lemma \ref{l:exponential_estimates_P_Pi} and Proposition \ref{p:exponential_estimates_Lambda}, we derive that
\begin{equation*}
\|f_4(\Delta_P(t), \Delta_{\bar{\Pi}}(t), \Delta_{\Lambda}(t))\|_{L^2} \leq K e^{-\lambda(T-t)}, \quad \forall t \in [0, T],
\end{equation*}
for some constants $K, \lambda > 0$, independent of $T$. Recall the definition of the linear operator $\mathcal{L}$ in \eqref{eq:linear_operator} and its corresponding adjoint operator $\mathcal{L}^*$. Similarly, for $\phi \in L^2(I; \mathbb{R}^d)$, we define the following linear operator 
\begin{equation*}
\mathcal{L}_T(t, \phi)(u) = (A + \bar{A} + B \bar{\Theta}^*_T(t)) \phi(u) + \int_I \big(\wt{A} G(u, v) + B \wt{\Theta}^*_T(t)(u, v) \big) \phi(v) dv
\end{equation*}
and denote its adjoint operator by $\mathcal{L}^*_T$. 
Then, the equation for $\Delta_{p}(t)$ can be rewritten as
\begin{equation*}
\begin{cases}
\dot{\Delta}_{p}(t)(u) + \mathcal{L}^*_T(t, \Delta_{p}(t))(u) + f_4(\Delta_P(t), \Delta_{\bar{\Pi}}(t), \Delta_{\Lambda}(t))(u) = 0, \\
\Delta_{p}(T)(u) = p(u).
\end{cases}
\end{equation*}
From Assumption \ref{a:stabilizability}, we also have $\|e^{t\mathcal{L}^*}\|_{op} \leq K e^{-\lambda t}$ for all $t \geq 0$. Note that
\begin{equation*}
\begin{aligned}
\dot{\Delta}_{p}(t)(u) + \mathcal{L}^*(\Delta_{p}(t))(u) &= (\mathcal{L}^*(\Delta_{p}(t)) - \mathcal{L}_T^*(t, \Delta_{p}(t)))(u) - f_4(\Delta_P(t), \Delta_{\bar{\Pi}}(t), \Delta_{\Lambda}(t))(u)\\
&= \big(B(\bar{\Theta}^* - \bar{\Theta}^*_T(t)) \big)^\top \Delta_{p}(t)(u) + \int_I \big(B(\wt{\Theta}^* - \wt{\Theta}^*_T(t))(v, u)\big)^\top \Delta_{p}(t)(v) dv \\
& \hspace{0.3in} - f_4(\Delta_P(t), \Delta_{\bar{\Pi}}(t), \Delta_{\Lambda}(t))(u),
\end{aligned}
\end{equation*}
which implies that
\begin{equation*}
\begin{aligned}
\Delta_{p}(t)(u) &= e^{(T-t)\mathcal{L}^*} p(u) - \int_t^T e^{(s-t)\mathcal{L}^*} (\bar{\Theta}^* - \bar{\Theta}^*_T(s))^\top B^\top \Delta_{p}(s)(u) ds \\
& \hspace{0.3in} - \int_t^T e^{(s-t)\mathcal{L}^*} \int_I (\wt{\Theta}^* - \wt{\Theta}^*_T(s))(v, u)^\top B^\top \Delta_{p}(s)(v) dv ds \\
& \hspace{0.3in} + \int_t^T e^{(s-t)\mathcal{L}^*} f_4(\Delta_P(s), \Delta_{\bar{\Pi}}(s), \Delta_{\Lambda}(s))(u) ds.
\end{aligned}
\end{equation*}
Hence, we deduce that
\begin{equation*}
\begin{aligned}
\|\Delta_{p}(t)\|_{L^2} &\leq \|e^{(T-t)\mathcal{L}^*}\|_{op} \|p\|_{L^2} + K \int_t^T \|e^{(s-t)\mathcal{L}^*}\|_{op} \|\bar{\Theta}^* - \bar{\Theta}^*_T(s)\| \|\Delta_{p}(s)\|_{L^2} ds \\
& \hspace{0.3in} + K \int_t^T \|e^{(s-t)\mathcal{L}^*}\|_{op} \|\wt{\Theta}^* - \wt{\Theta}^*_T(s)\|_{L^2} \|\Delta_{p}(s)\|_{L^2} ds \\
& \hspace{0.3in} + \int_t^T \|e^{(s-t)\mathcal{L}^*}\|_{op} \|f_4(\Delta_P(s), \Delta_{\bar{\Pi}}(s), \Delta_{\Lambda}(s))\|_{L^2} ds.
\end{aligned}
\end{equation*}
We can choose constants $K, \lambda > 0$, independent of $T$, such that $\|e^{t\mathcal{L}^*}\|_{op} \leq K e^{-\lambda t}$ for all $t \geq 0$ and
\begin{equation*}
\|\bar{\Theta}^* - \bar{\Theta}^*_T(t)\| + \|\wt{\Theta}^* - \wt{\Theta}^*_T(t)\|_{L^2} + \|f_4(\Delta_P(t), \Delta_{\bar{\Pi}}(t), \Delta_{\Lambda}(t))\|_{L^2} \leq K e^{-2\lambda(T-t)}, \quad \forall t \in [0, T].
\end{equation*}
Then, we obtain the following estimate:
\begin{equation*}
\begin{aligned}
\|\Delta_{p}(t)\|_{L^2} &\leq K e^{-\lambda(T-t)} + K \int_t^T e^{-\lambda(s-t)} e^{-2\lambda(T-s)} \|\Delta_{p}(s)\|_{L^2} ds \\
& \hspace{0.3in} + K \int_t^T e^{-\lambda(s-t)} e^{-2\lambda(T-s)} ds \\
&\leq K e^{-\lambda(T-t)} + K e^{-\lambda(T-t)} \int_t^T e^{-\lambda(T-s)} \|\Delta_{p}(s)\|_{L^2} ds,
\end{aligned}
\end{equation*}
which yields that
\begin{equation*}
    e^{\lambda(T-t)} \|\Delta_{p}(t)\|_{L^2} \le K + K \int_t^T e^{-2\lambda(T-s)} (e^{\lambda(T-s)} \|\Delta_{p}(s)\|_{L^2}) ds.
\end{equation*}
Let $g(t) = e^{\lambda(T-t)} \|\Delta_{p}(t)\|_{L^2}$ and $\wt{g}(\tau) = g(T-\tau)$ for all $t, \tau \in [0, T]$. Then, we have
\begin{equation*}
g(t) \leq K + K \int_t^T e^{-2\lambda(T-s)} g(s) ds,
\end{equation*}
and therefore
\begin{equation*}
\wt{g}(\tau) \leq K + K \int_0^{\tau} e^{-2\lambda s} \wt{g}(s) ds.
\end{equation*}
Hence, Gr\"onwall's inequality implies that
\begin{equation*}
\wt{g}(\tau) \leq K e^{\int_0^{\tau} K e^{-2\lambda s} ds} \leq K e^{\frac{K}{2\lambda}} \leq K
\end{equation*}
for all $\tau \in [0, T]$. Therefore, we obtain $e^{\lambda(T-t)} \|\Delta_{p}(t)\|_{L^2} \leq K$ for all $t \in [0, T]$, which is equivalent to
\begin{equation*}
\|\Delta_{p}(t)\|_{L^2} \le K e^{-\lambda(T-t)}, \quad \forall t \in [0, T].
\end{equation*}
Next, the estimate \eqref{eq:exponential_estimate_theta} follows by the same argument used to establish \eqref{eq:exponential_estimate_tilde_Theta} in Proposition \ref{p:exponential_estimates_Lambda}. This completes the proof.
\end{proof}


\section{Main results}
\label{s:main_result}

\subsection{Turnpike property}
\label{s:turnpike_properties}

We now present the main result of this paper, which establishes a turnpike property characterizing the relationship between the optimal pair for the finite-horizon control problem \eqref{eq:cost_functional_finite_horizon} and that for the ergodic control problem \eqref{eq:cost_functional_infinite_horizon}. 

\begin{theorem}
\label{t:turnpike_property}
Suppose that Assumptions \ref{a:coefficients_solvability_finite}, \ref{a:stabilizability}, and \ref{a:solvability_Lambda_p} hold. Let $(X_T, \alpha_T)$, given by \eqref{eq:optimal_state_finite}-\eqref{eq:optimal_control_finite}, be the optimal pair for the finite-horizon GMFC problem with initial state $\xi_1 \in \kL_0^2$, and let $(X_{\infty}, \alpha_{\infty})$, given by \eqref{eq:optimal_state_ergodic}-\eqref{eq:optimal_control_ergodic}, be the optimal pair for the ergodic GMFC problem with a possibly different initial state $\xi_2 \in \kL_0^2$. Then, there exist positive constants $K$ and $\lambda$, independent of $T$, such
that
\begin{equation}
\label{eq:turnpike_property}
\mathbb{E} \Big[\int_I |X_T^u(t) - X_{\infty}^u(t)|^2 du \Big] + \mathbb{E} \Big[\int_I |\alpha_T^u(t) - \alpha_{\infty}^u(t)|^2 du \Big] \leq K \big(e^{-\lambda t} + e^{-\lambda(T-t)} \big), \quad \forall t \in [0, T].
\end{equation}
Specifically, if $\mathbb{E}[|\xi_1^u - \xi_2^u|^2] = 0$ for a.e. $u \in I$, then
\begin{equation}
\label{eq:turnpike_property_same_initial}
\mathbb{E} \Big[\int_I |X_T^u(t) - X_{\infty}^u(t)|^2 du \Big] + \mathbb{E} \Big[\int_I |\alpha_T^u(t) - \alpha_{\infty}^u(t)|^2 du \Big] \leq K e^{-\lambda(T-t)}, \quad \forall t \in [0, T].
\end{equation}
\end{theorem}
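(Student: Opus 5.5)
The estimate will come from splitting both processes into their mean part and fluctuation part, and comparing each part separately using the coefficient convergence results of Section~\ref{s:convergence_Riccati_system}. Set $\bar\Delta(t) := \bar X_T(t) - \bar X_\infty(t) \in L^2(I;\mathbb R^d)$ and $\check\Delta^u(t) := (X_T^u - \bar X_T^u)(t) - (X_\infty^u - \bar X_\infty^u)(t)$. Since $\mathbb E\int_I|X_T^u-X_\infty^u|^2du = \|\bar\Delta(t)\|_{L^2}^2 + \mathbb E\int_I|\check\Delta^u(t)|^2du$, it suffices to bound these two terms by $K(e^{-\lambda t}+e^{-\lambda(T-t)})$. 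The control difference is then handled through the feedback formulas \eqref{eq:optimal_control_finite} and \eqref{eq:optimal_control_ergodic}. Write $\alpha_T-\alpha_\infty$ as $\Theta^*\check\Delta + \bar\Theta^*\bar\Delta + \wt{\bm\Theta}^*\bar\Delta$ plus terms of the form (coefficient difference)$\times$(finite-horizon state) and $\theta_T^*-\theta^*$. Lemma~\ref{l:exponential_estimates_P_Pi}, Propositions~\ref{p:exponential_estimates_Lambda}--\ref{p:exponential_estimates_p}, and a uniform-in-$T$ second-moment bound on $X_T$ then give the control estimate from the state estimate.

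\emph{Mean part.} Taking expectations in \eqref{eq:optimal_state_finite} and \eqref{eq:optimal_state_ergodic} gives
\[
\dot{\bar\Delta}(t) = \mathcal L\bar\Delta(t) + (\mathcal L_T(t)-\mathcal L)\bar X_T(t) + B(\theta_T^*(t)-\theta^*),
\qquad \bar\Delta(0)=\bar\xi_1-\bar\xi_2 .
\]
Here $\|\mathcal L_T(t)-\mathcal L\|_{op}\le \|B\|\big(\|\bar\Theta_T^*-\bar\Theta^*\|+\|\wt\Theta_T^*-\wt\Theta^*\|_{L^2}\big)\le Ke^{-\lambda(T-t)}$ by \eqref{eq:exponential_estimate_Theta} and \eqref{eq:exponential_estimate_tilde_Theta}. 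First I would establish that $\sup_{t\le T}\|\bar X_T(t)\|_{L^2}\le K$ uniformly in $T$. To get this, write the same equation with $\bar X_T = \bar X_\infty+\bar\Delta$, and use Lemma~\ref{l:Moment_estimate} for $\bar X_\infty$. The perturbation term $\int_t^{T}$-type contributions carry the factor $e^{-\lambda(T-s)}$, which is integrable uniformly in $T$, so a Gr\"onwall argument in the weighted form used in Proposition~\ref{p:exponential_estimates_p} closes the bound. Variation of constants with \eqref{eq:operator_uniform_stable} then yields
\[
\|\bar\Delta(t)\|_{L^2}\le Ke^{-\lambda t}+K\int_0^t e^{-\lambda(t-s)}e^{-\lambda'(T-s)}ds\le K(e^{-\lambda t}+e^{-\lambda(T-t)}),
\]
after shrinking $\lambda$. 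When $\bar\xi_1=\bar\xi_2$ the first term is absent, which is the mechanism behind \eqref{eq:turnpike_property_same_initial}.

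\emph{Fluctuation part.} Here $\check\Delta$ solves a linear SDE with drift $(A+B\Theta^*)\check\Delta$ plus forcing $B(\Theta_T^*-\Theta^*)\check X_T$. Its diffusion is $(C+D\Theta^*)\check\Delta$ plus a term linear in $\bar\Delta$ (through $C+\bar C+D\bar\Theta^*$ and $\wt{C}G+D\wt\Theta^*$) plus terms in the coefficient differences times $X_T$, $\bar X_T$, and $\theta_T^*-\theta^*$. I would apply It\^o's formula to $\langle\bar P\check\Delta,\check\Delta\rangle$, with $\bar P$ the Lyapunov solution in \eqref{eq:Lyapunov_equation}, and integrate over $I$. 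Young's inequality absorbs the cross terms, giving
\[
\frac{d}{dt}\mathbb E\int_I\langle\bar P\check\Delta^u,\check\Delta^u\rangle du\le -c\,\mathbb E\int_I\langle\bar P\check\Delta^u,\check\Delta^u\rangle du + K\|\bar\Delta(t)\|^2_{L^2}+Ke^{-\lambda(T-t)}\Big(1+\mathbb E\int_I|X_T^u(t)|^2du\Big).
\]
Gr\"onwall then gives the desired bound, provided $\mathbb E\int_I|X_T^u(t)|^2du\le K$ uniformly in $t\le T$ and $T$.

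\emph{Main obstacle.} The crucial step, and what I expect to be the main obstacle, is the uniform-in-$T$ moment bound for $X_T$ (and $\bar X_T$). The finite-horizon feedback gains are only close to the stabilizing ones near $t$ far from $T$, and a priori they need not be stabilizing near $T$. I would handle it as above: treat $X_T$ as a perturbation of the stabilized ergodic dynamics, where the perturbation coefficients are $O(e^{-\lambda(T-s)})$ and hence have integrable total mass bounded independently of $T$. A Gr\"onwall-type argument with integrating factor then gives bounds independent of $T$, just as in the proof of Proposition~\ref{p:exponential_estimates_p}. Finally, squaring the mean bound and combining it with the fluctuation bound gives \eqref{eq:turnpike_property}. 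In the case of identical initial laws in mean and fluctuation, the $e^{-\lambda t}$ terms vanish, which gives \eqref{eq:turnpike_property_same_initial}.
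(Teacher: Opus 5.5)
Your proposal is correct and is essentially the paper's proof. It uses the same split into the mean part $\bar X_T-\bar X_\infty$ (variation of constants with $e^{t\mathcal L}$ plus Gr\"onwall) and the centered part (It\^o's formula with the Lyapunov matrix $\bar P$ of \eqref{eq:Lyapunov_equation}), followed by the feedback formulas for the controls. The only difference is bookkeeping: the paper pairs the coefficient differences with $X_\infty$ (bounded by Lemma~\ref{l:Moment_estimate}) and treats the gain differences acting on $X_T-X_\infty$ as small multiplicative perturbations (Gr\"onwall for the mean, a time-dependent rate $-c+Ke^{-\lambda(T-t)}$ for the centered part). So the uniform-in-$T$ bound on $X_T$ that you flag as the main obstacle is never needed separately; your own proof of it, via $X_T=X_\infty+(X_T-X_\infty)$, already is the paper's one-pass estimate.

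One small correction: for \eqref{eq:turnpike_property_same_initial} you need $\check\Delta(0)=0$ in $L^2$, i.e.\ $\xi_1^u=\xi_2^u$ a.s.\ as assumed in the statement. Merely equal initial laws would not suffice.
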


\begin{proof}
For $t \in [0, T]$ and $u \in I$, define  $\wh{X}^u(t) := X_T^u(t) - X_{\infty}^u(t)$. Subtracting the closed-loop system \eqref{eq:optimal_state_ergodic} for $X_{\infty}$ from the closed-loop system \eqref{eq:optimal_state_finite} for $X_{T}$, we have
\begin{equation*}
\begin{aligned}
d\wh{X}^u(t) &= \Big\{(A + B \Theta_T^*(t)) \big(\wh{X}^u(t) - \mathbb{E}[\wh{X}^u(t)] \big) + B(\Theta_T^*(t) - \Theta^*) \big(X_{\infty}^u(t) - \mathbb{E}[X_{\infty}^u(t)] \big) \\
& \hspace{0.4in} + (A + \bar{A} + B \bar{\Theta}_T^*(t)) \mathbb{E}[\wh{X}^u(t)] + B(\bar{\Theta}_T^*(t) - \bar{\Theta}^*) \mathbb{E}[X_{\infty}^u(t)] \\
& \hspace{0.4in} + B(\theta_T^*(t) - \theta^*)(u) + \int_I \big(\wt{A} G(u, v) + B \wt{\Theta}_T^*(t)(u, v) \big) \mathbb{E}[\wh{X}^v(t)] dv \\
& \hspace{0.4in} + \int_I B \big(\wt{\Theta}_T^*(t)(u, v) - \wt{\Theta}^*(u, v) \big) \mathbb{E}[X_{\infty}^v(t)] dv \Big\} dt \\
& \quad + \Big\{(C + D \Theta_T^*(t)) \big(\wh{X}^u(t) - \mathbb{E}[\wh{X}^u(t)] \big) + D (\Theta_T^*(t) - \Theta^*)\big(X_{\infty}^u(t) - \mathbb{E}[X_{\infty}^u(t)] \big) \\
& \hspace{0.4in} + (C + \bar{C} + D \bar{\Theta}_T^*(t)) \mathbb{E}[\wh{X}^u(t)] + D(\bar{\Theta}_T^*(t) - \bar{\Theta}^*) \mathbb{E}[X_{\infty}^u(t)] \\
&\hspace{0.4in} + D(\theta_T^*(t) - \theta^*)(u) + \int_I \big(\wt{C} G(u, v) + D \wt{\Theta}_T^*(t)(u, v) \big) \mathbb{E}[\wh{X}^v(t)] dv \\
&\hspace{0.4in} + \int_I D \big(\wt{\Theta}_T^*(t)(u, v) - \wt{\Theta}^*(u, v) \big) \mathbb{E}[X_{\infty}^v(t)] dv \Big\} dW^u(t)
\end{aligned}
\end{equation*}
with the initial condition $\wh{X}^u(0) = \xi_1^u - \xi_2^u$ for $u \in I$. First, we give an estimate for $\|\mathbb{E}[\wh{X}(t)]\|_{L^2}$. Taking expectations in the above SDE yields
\begin{equation*}
\begin{aligned}
d \mathbb{E}[\wh{X}^u(t)] &= \Big\{(A + \bar{A} + B \bar{\Theta}_T^*(t)) \mathbb{E}[\wh{X}^u(t)] + B(\bar{\Theta}_T^*(t) - \bar{\Theta}^*) \mathbb{E}[X_{\infty}^u(t)] \\
&\hspace{0.3in} + B(\theta_T^*(t) - \theta^*)(u) + \int_I \big(\wt{A} G(u, v) + B \wt{\Theta}_T^*(t)(u, v) \big) \mathbb{E}[\wh{X}^v(t)] dv \\
& \hspace{0.3in} + \int_I B \big(\wt{\Theta}_T^*(t)(u, v) - \wt{\Theta}^*(u, v) \big) \mathbb{E}[X_{\infty}^v(t)] dv \Big\} dt \\
&= \Big\{(A + \bar{A} + B \bar{\Theta}^*) \mathbb{E}[\wh{X}^u(t)] + \int_I \big(\wt{A} G(u, v) + B \wt{\Theta}^*(u, v) \big) \mathbb{E}[\wh{X}^v(t)] dv \\
& \hspace{0.3in} + B(\bar{\Theta}_T^*(t) - \bar{\Theta}^*) \mathbb{E}[\wh{X}^u(t)] + B(\bar{\Theta}_T^*(t) - \bar{\Theta}^*) \mathbb{E}[X_{\infty}^u(t)] \\
& \hspace{0.3in} + \int_I B\big(\wt{\Theta}_T^*(t)(u, v) - \wt{\Theta}^*(u, v) \big) \mathbb{E}[\wh{X}^v(t)] dv \\
& \hspace{0.3in} + \int_I B \big(\wt{\Theta}_T^*(t)(u, v) - \wt{\Theta}^*(u, v) \big) \mathbb{E}[X_{\infty}^v(t)] dv + B(\theta_T^*(t) - \theta^*)(u) \Big\} dt
\end{aligned}
\end{equation*}
with $\mathbb{E}[\wh{X}^u(0)] = \mathbb{E}[\xi_1^u - \xi_2^u]$ for $u \in I$. By the definition of the linear operator $\mathcal{L}$ in \eqref{eq:linear_operator}, we obtain that 
\begin{equation*}
\begin{aligned}
\mathbb{E}[\wh{X}^u(t)] &= e^{t \mathcal{L}} \mathbb{E}[\xi_1^u - \xi_2^u] + \int_0^t e^{(t-s) \mathcal{L}} \Big\{ B(\bar{\Theta}_T^*(s) - \bar{\Theta}^*) \mathbb{E}[\wh{X}^u(s)] + B(\bar{\Theta}_T^*(s) - \bar{\Theta}^*) \mathbb{E}[X_{\infty}^u(s)] \\
&\hspace{0.4in} + \int_I B \big(\wt{\Theta}_T^*(s)(u, v) - \wt{\Theta}^*(u, v) \big) \mathbb{E}[\wh{X}^v(s)] dv \\
& \hspace{0.4in} + \int_I B \big(\wt{\Theta}_T^*(s)(u, v) - \wt{\Theta}^*(u, v) \big) \mathbb{E}[X_{\infty}^v(s)] dv + B(\theta_T^*(s) - \theta^*)(u) \Big\} ds.
\end{aligned}
\end{equation*}
Using the estimates \eqref{eq:operator_uniform_stable}, \eqref{eq:moment_boundness}, \eqref{eq:exponential_estimate_Theta}, \eqref{eq:exponential_estimate_tilde_Theta}, and \eqref{eq:exponential_estimate_theta}, together with Minkowski's and Cauchy--Schwarz inequalities, we derive the following inequalities:
\begin{equation*}
\begin{aligned}
\|\mathbb{E}[\wh{X}(t)]\|_{L^2} & \leq \|e^{t \mathcal{L}}\|_{op} \Big(\int_I \mathbb{E}\big[|\xi_1^u - \xi_2^u|^2 \big] du \Big)^{\frac{1}{2}} \\
& \hspace{0.3in} + \int_0^t \|e^{(t-s)\mathcal{L}}\|_{op} K e^{-\lambda(T-s)} \big( \|\mathbb{E}[\wh{X}(s)]\|_{L^2} + \|\mathbb{E}[X_{\infty}(s)]\|_{L^2} + 1 \big) ds \\
& \leq K e^{-\lambda t} + \int_0^t K e^{-\lambda(t-s)} e^{-\lambda(T-s)} \big(\|\mathbb{E}[\wh{X}(s)]\|_{L^2} + 1 \big) ds \\
& \leq K \big(e^{-\lambda t} + e^{-\lambda(T-t)} \big) + K \int_0^t e^{-\lambda(T+t-2s)} \|\mathbb{E}[\wh{X}(s)]\|_{L^2} ds.
\end{aligned}
\end{equation*}
Gr\"onwall's inequality yields the following estimate:
\begin{equation}
\label{eq:X_hat_mean_estimate}
\|\mathbb{E}[\wh{X}(t)]\|_{L^2} \leq K \big(e^{-\lambda t} + e^{-\lambda(T-t)} \big) e^{\int_0^t K e^{-\lambda(T+t-2s)} ds} \leq K \big(e^{-\lambda t} + e^{-\lambda(T-t)} \big), \quad \forall t \in [0, T],
\end{equation}
where $K$ and $\lambda$ are independent of $t$ and $T$.

Next, we estimate $\wh{X}^u(t) - \mathbb{E}[\wh{X}^u(t)]$. It is clear that
\begin{equation*}
\begin{aligned}
& d \big(\wh{X}^u(t) - \mathbb{E}[\wh{X}^u(t)] \big) \\
= \ & \big\{(A + B \Theta^*) \big(\wh{X}^u(t) - \mathbb{E}[\wh{X}^u(t)] \big) + B(\Theta_T^*(t) - \Theta^*) \big(\wh{X}^u(t) - \mathbb{E}[\wh{X}^u(t)] \big) \\
& \hspace{0.2in} + B(\Theta_T^*(t) - \Theta^*) \big(X_{\infty}^u(t) - \mathbb{E}[X_{\infty}^u(t)] \big) \big\} dt \\
& + \Big\{(C + D \Theta_T^*(t))\big(\wh{X}^u(t) - \mathbb{E}[\wh{X}^u(t)] \big) + D(\Theta_T^*(t) - \Theta^*)\big(\wh{X}^u(t) - \mathbb{E}[\wh{X}^u(t)] \big) \\
& \hspace{0.2in} + D(\Theta_T^*(t) - \Theta^*)\big(X_{\infty}^u(t) - \mathbb{E}[X_{\infty}^u(t)] \big) + (C + \bar{C} + D \bar{\Theta}_T^*(t)) \mathbb{E}[\wh{X}^u(t)] \\
& \hspace{0.2in} + D(\bar{\Theta}_T^*(t) - \bar{\Theta}^*) \mathbb{E}[X_{\infty}^u(t)] + D(\theta_T^*(t) - \theta^*)(u) + \int_I \big(\wt{C} G(u, v) + D \wt{\Theta}_T^*(t)(u, v) \big) \mathbb{E}[\wh{X}^v(t)] dv \\
& \hspace{0.2in} + \int_I D \big(\wt{\Theta}_T^*(t)(u, v) - \wt{\Theta}^*(u, v) \big) \mathbb{E}[X_{\infty}^v(t)] dv \Big\} dW^u(t) \\
:= \ & \big\{(A + B \Theta^*) \big(\wh{X}^u(t) - \mathbb{E}[\wh{X}^u(t)] \big) + B(\Theta_T^*(t) - \Theta^*) \big(\wh{X}^u(t) - \mathbb{E}[\wh{X}^u(t)] \big) + \Sigma_1^u(t) \big\} dt \\
& \hspace{0.2in} + \big\{(C + D \Theta^*) \big(\wh{X}^u(t) - \mathbb{E}[\wh{X}^u(t)] \big) + D(\Theta_T^*(t) - \Theta^*) \big(\wh{X}^u(t) - \mathbb{E}[\wh{X}^u(t)] \big) + \Sigma_2^u(t) \big\} dW^u(t).
\end{aligned}
\end{equation*}
By the estimates \eqref{eq:moment_boundness} and \eqref{eq:exponential_estimate_Theta}, we have
\begin{equation*}
\begin{aligned}
\mathbb{E} \Big[\int_I |\Sigma_1^u(t)|^2 du \Big] &= \mathbb{E} \Big[\int_I \big| B(\Theta_T^*(t) - \Theta^*) \big(X_{\infty}^u(t) - \mathbb{E}[X_{\infty}^u(t)] \big) \big|^2 du \Big] \\
& \leq K \|\Theta_T^*(t) - \Theta^*\|^2 \mathbb{E} \Big[ \int_I |X_{\infty}^u(t)|^2 du \Big] \\
& \leq K e^{-2\lambda(T-t)}, \quad \forall t \in [0, T].
\end{aligned}
\end{equation*}
Similarly, using the estimates \eqref{eq:moment_boundness}, \eqref{eq:exponential_estimate_Theta}, \eqref{eq:exponential_estimate_tilde_Theta}, \eqref{eq:exponential_estimate_theta}, and \eqref{eq:X_hat_mean_estimate}, we deduce the following estimate for $\Sigma_2(t)$:
\begin{equation*}
\begin{aligned}
\mathbb{E} \Big[\int_I |\Sigma_2^u(t)|^2 du \Big] & \leq K \Big\{ \|\Theta_T^*(t) - \Theta^*\|^2 \int_I \big|\mathbb{E}[X_{\infty}^u(t)] \big|^2 du + \|C + \bar{C} + D \bar{\Theta}_T^*(t)\|^2 \int_I \big|\mathbb{E}[\wh{X}^u(t)] \big|^2 du \\
& \hspace{0.3in} + \|\theta_T^*(t) - \theta^*\|_{L^2}^2 + \big(\|G\|_{L^2}^2 + \|\wt{\Theta}_T^*(t)\|_{L^2}^2 \big) \int_I \big|\mathbb{E}[\wh{X}^u(t)] \big|^2 du \\
& \hspace{0.3in} + \|\wt{\Theta}_T^*(t) - \wt{\Theta}^*\|_{L^2}^2 \int_I \big|\mathbb{E}[X_{\infty}^v(t)] \big|^2 dv \Big\} \\
& \leq K \big(e^{-2 \lambda t} + e^{-2\lambda(T-t)} \big), \quad \forall t \in [0, T].
\end{aligned}
\end{equation*}
Next, let $\bar{P} \in \mathbb{S}_{++}^d$ be the solution to the Lyapunov equation \eqref{eq:Lyapunov_equation}. Applying It\^o's formula, we obtain
\begin{equation*}
\begin{aligned}
&\frac{d}{dt} \mathbb{E} \big[ \big\langle \bar{P}\big(\wh{X}^u(t) - \mathbb{E}[\wh{X}^u(t)] \big), \wh{X}^u(t) - \mathbb{E}[\wh{X}^u(t)] \big\rangle \big] \\
= \ &  \mathbb{E} \big[  \big\langle \{(A + B \Theta^*)^\top \bar{P} + \bar{P}(A + B \Theta^*) + (C + D \Theta^*)^\top \bar{P}(C + D \Theta^*)\} \big(\wh{X}^u(t) - \mathbb{E}[\wh{X}^u(t)] \big), \\
& \hspace{0.3in} \wh{X}^u(t) - \mathbb{E}[\wh{X}^u(t)] \big\rangle + \big\langle \big\{ (\Theta_T^*(t) - \Theta^*)^\top B^\top \bar{P} + \bar{P} B(\Theta_T^*(t) - \Theta^*) + (C + D \Theta^*)^\top \bar{P} D(\Theta_T^*(t) - \Theta^*) \\
& \hspace{0.3in} + (\Theta_T^*(t) - \Theta^*)^\top D^\top \bar{P}(C + D \Theta^*) + (\Theta_T^*(t) - \Theta^*)^\top D^\top \bar{P} D(\Theta_T^*(t) - \Theta^*) \big\} \big(\wh{X}^u(t) - \mathbb{E}[\wh{X}^u(t)] \big), \\
& \hspace{0.3in} \wh{X}^u(t) - \mathbb{E}[\wh{X}^u(t)] \big\rangle + 2 \big\langle \bar{P} \Sigma_1^u(t), \wh{X}^u(t) - \mathbb{E}[\wh{X}^u(t)] \big\rangle \\
& \hspace{0.3in} + 2 \big\langle \big\{(C + D \Theta^*)^\top \bar{P} + (\Theta_T^*(t) - \Theta^*)^\top D^\top \bar{P} \big\} \Sigma_2^u(t), \wh{X}^u(t) - \mathbb{E}[\wh{X}^u(t)] \big\rangle + \langle \bar{P} \Sigma_2^u(t), \Sigma_2^u(t) \rangle \big] \\
:= \ & \mathbb{E} \big[ -\big|\wh{X}^u(t) - \mathbb{E}[\wh{X}^u(t)] \big|^2 + \big\langle H_1(t) \big(\wh{X}^u(t) - \mathbb{E}[\wh{X}^u(t)] \big), \wh{X}^u(t) - \mathbb{E}[\wh{X}^u(t)] \big\rangle \\
& \hspace{0.3in} + 2 \big\langle H_2^u(t), \wh{X}^u(t) - \mathbb{E}[\wh{X}^u(t)] \big\rangle + \langle \bar{P} \Sigma_2^u(t), \Sigma_2^u(t) \rangle \big],
\end{aligned}
\end{equation*}
where
\begin{equation*}
\begin{aligned}
H_1(t) &:= (\Theta_T^*(t) - \Theta^*)^\top B^\top \bar{P} + \bar{P} B(\Theta_T^*(t) - \Theta^*) + (C + D \Theta^*)^\top \bar{P} D(\Theta_T^*(t) - \Theta^*) \\
& \hspace{0.3in} + (\Theta_T^*(t) - \Theta^*)^\top D^\top \bar{P}(C + D \Theta^*) + (\Theta_T^*(t) - \Theta^*)^\top D^\top \bar{P} D(\Theta_T^*(t) - \Theta^*),
\end{aligned}
\end{equation*}
and
\begin{equation*}
H_2^u(t) := \bar{P} \Sigma_1^u(t) + \big\{(C + D \Theta^*)^\top \bar{P} + (\Theta_T^*(t) - \Theta^*)^\top D^\top \bar{P} \big\} \Sigma_2^u(t)
\end{equation*}
for all $t \in [0, T]$ and a.e. $u \in I$. Then, using the inequality \eqref{eq:exponential_estimate_Theta}, we have $\|H_1(t)\| \leq K e^{-\lambda(T-t)}$ for all $t \in [0, T]$. Similarly, the following estimate holds:
\begin{equation*}
\begin{aligned}
\mathbb{E} \Big[\int_I |H_2^u(t)|^2 du \Big] & \leq K \Big\{ \mathbb{E} \Big[\int_I |\Sigma_1^u(t)|^2 du \Big] + \big(1 + \|\Theta_T^*(t) - \Theta^*\|^2 \big) \mathbb{E}\Big[\int_I |\Sigma_2^u(t)|^2 du \Big] \Big\} \\
& \leq K \big(e^{-2 \lambda t} + e^{-2\lambda(T-t)} \big), \quad \forall t \in [0, T].
\end{aligned}
\end{equation*}
Hence, by Young's inequality and the above estimates for $\Sigma_2, H_1$, and $H_2$, we deduce that
\begin{equation*}
\begin{aligned}
&\frac{d}{dt} \mathbb{E} \Big[ \int_I \big\langle \bar{P}\big(\wh{X}^u(t) - \mathbb{E}[\wh{X}^u(t)] \big), \wh{X}^u(t) - \mathbb{E}[\wh{X}^u(t)] \big\rangle du \Big] \\
\leq \ & \Big( -\frac{1}{2} + K e^{-\lambda(T-t)} \Big)  \mathbb{E} \Big[\int_I \big|\wh{X}^u(t) - \mathbb{E}[\wh{X}^u(t)] \big|^2 du \Big] + K \big(e^{-\lambda t} + e^{-\lambda(T-t)} \big)
\end{aligned}
\end{equation*}
for all $t \in [0, T]$. Let $\beta^{*} > 0$ and $\beta' > 0$ denote the largest and the smallest eigenvalues of $\bar{P}$, respectively. Then, 
$$\beta' \big|\wh{X}^u(t) - \mathbb{E}[\wh{X}^u(t)] \big|^2 \leq \big\langle \bar{P} \big(\wh{X}^u(t) - \mathbb{E}[\wh{X}^u(t)] \big), \wh{X}^u(t) - \mathbb{E}[\wh{X}^u(t)] \big\rangle \leq \beta^{*} \big|\wh{X}^u(t) - \mathbb{E}[\wh{X}^u(t)] \big|^2$$
for all $u \in I$, which yields the following inequality:
\begin{equation*}
\begin{aligned}
&\frac{d}{dt} \mathbb{E} \Big[ \int_I \big\langle \bar{P}\big(\wh{X}^u(t) - \mathbb{E}[\wh{X}^u(t)] \big), \wh{X}^u(t) - \mathbb{E}[\wh{X}^u(t)] \big\rangle du \Big] \\
\leq \ & g_2(t)  \mathbb{E} \Big[ \int_I \big\langle \bar{P}\big(\wh{X}^u(t) - \mathbb{E}[\wh{X}^u(t)] \big), \wh{X}^u(t) - \mathbb{E}[\wh{X}^u(t)] \big\rangle du \Big] + K \big(e^{-\lambda t} + e^{-\lambda(T-t)} \big)
\end{aligned}
\end{equation*}
for all $t \in [0, T]$. Here, we define $g_2(t) := -\frac{1}{2 \beta^{*}} + \frac{K}{\beta'} e^{-\lambda(T-t)}$ for all $t \in [0, T]$. Then, for $0 \leq s \leq t \leq T$, we have $e^{\int_s^t g_2(r) dr} \leq e^{\frac{K}{\lambda \beta'} - \frac{1}{2 \beta^{*}}(t-s)}$. Thus,
\begin{equation*}
\int_0^t e^{\int_s^t g_2(r) dr} K \big(e^{-\lambda s} + e^{-\lambda(T-s)} \big) ds \leq K \big(e^{-\lambda t} + e^{-\lambda(T-t)} \big)
\end{equation*}
for possibly different positive constants $K$ and $\lambda$. By Gr\"onwall's inequality, we derive that
\begin{equation*}
\begin{aligned}
& \mathbb{E} \Big[ \int_I \big\langle \bar{P}\big(\wh{X}^u(t) - \mathbb{E}[\wh{X}^u(t)] \big), \wh{X}^u(t) - \mathbb{E}[\wh{X}^u(t)] \big\rangle du \Big] \\
\leq \ & \mathbb{E} \Big[ \int_I \big\langle \bar{P}\big(\wh{X}^u(0) - \mathbb{E}[\wh{X}^u(0)] \big), \wh{X}^u(0) - \mathbb{E}[\wh{X}^u(0)] \big\rangle du \Big] e^{\int_0^t g_2(r) dr} \\
& \hspace{0.2in} + \int_0^t e^{\int_s^t g_2(r) dr} K \big(e^{-\lambda s} + e^{-\lambda(T-s)} \big) ds \\
\leq \ & K \big(e^{-\lambda t} + e^{-\lambda(T-t)} \big)
\end{aligned}
\end{equation*}
for all $t \in [0, T]$. Since $\bar{P} \in \mathbb{S}_{++}^d$, it follows that
\begin{equation*}
\mathbb{E} \Big[ \int_I \big|\wh{X}^u(t) - \mathbb{E}[\wh{X}^u(t)] \big|^2 du \Big] \leq K \big(e^{-\lambda t} + e^{-\lambda(T-t)} \big), \quad \forall t \in [0, T].
\end{equation*}
Combining this estimate with \eqref{eq:X_hat_mean_estimate}, we conclude the turnpike inequality for the optimal state:
\begin{equation*}
\mathbb{E} \Big[\int_I \big|X_T^u(t) - X_{\infty}^u(t) \big|^2 du \Big] = \mathbb{E} \Big[\int_I \big|\wh{X}^u(t) \big|^2 du \Big] \leq K \big(e^{-\lambda t} + e^{-\lambda(T-t)} \big)
\end{equation*}
for all $t \in [0, T]$, where $K$ and $\lambda$ are independent of $t$ and $T$.

Next, we establish the turnpike estimate for the optimal controls. From the feedback form of the controls $\alpha_T$ in \eqref{eq:optimal_control_finite} and $\alpha_{\infty}$ in \eqref{eq:optimal_control_ergodic}, we obtain
\begin{equation*}
\begin{aligned}
\alpha_T^u(t) - \alpha_{\infty}^u(t) &= \Theta_T^*(t) \wh{X}^u(t) + (\Theta_T^*(t) - \Theta^*) X^u_{\infty}(t) + (\bar{\Theta}_T^*(t) - \Theta_T^*(t)) \mathbb{E}[\wh{X}^u(t)] \\
& \hspace{0.3in} + \big[(\bar{\Theta}_T^*(t) -\bar{\Theta}^*) - (\Theta_T^*(t) - \Theta^*) \big] \mathbb{E}[X_\infty^u(t)] + (\theta_T^*(t) - \theta^*)(u) \\
& \hspace{0.3in} + \int_I \wt{\Theta}_T^*(t)(u, v) \mathbb{E}[\wh{X}^v(t)] dv + \int_I \big(\wt{\Theta}_T^*(t)(u, v) - \wt{\Theta}^*(u, v) \big) \mathbb{E}[X_{\infty}^v(t)] dv,
\end{aligned}
\end{equation*}
which yields
\begin{equation*}
\begin{aligned}
\mathbb{E} \Big[ \int_I \big|\alpha_T^u(t) - \alpha_{\infty}^u(t) \big|^2 du \Big] & \leq K \Big\{ \|\Theta_T^*(t)\|^2 \mathbb{E} \Big[\int_I \big|\wh{X}^u(t) \big|^2 du \Big] + \|\Theta_T^*(t) - \Theta^*\|^2 \mathbb{E} \Big[\int_I \big|X_{\infty}^u(t) \big|^2 du \Big] \\
& \hspace{0.3in} + \big(\|\bar{\Theta}^*_T(t)\|^2 + \|\Theta_T^*(t)\|^2 \big) \int_I \big|\mathbb{E}[\wh{X}^u(t)] \big|^2 du \\
&\hspace{0.3in} + \big( \|\bar{\Theta}_T^*(t) - \bar{\Theta}^*\|^2 + \|\Theta^*_T(t) -\Theta^*\|^2 \big) \int_I \big|\mathbb{E}[X_\infty^u(t)] \big|^2 du \\ 
& \hspace{0.3in} + \|\theta_T^*(t) - \theta^*\|^2_{L^2} + \|\wt{\Theta}_T^*(t)\|^2_{L^2} \int_I \big|\mathbb{E}[\wh{X}^v(t)] \big|^2 dv \\
& \hspace{0.3in} + \|\wt{\Theta}_T^*(t) - \wt{\Theta}^*\|^2_{L^2} \int_I \big|\mathbb{E}[X_{\infty}^v(t)] \big|^2 dv \Big\} \\
& \leq K \big(e^{-\lambda t} + e^{-\lambda(T-t)} \big)
\end{aligned}
\end{equation*}
for all $t \in [0, T]$. This follows from the inequalities \eqref{eq:moment_boundness}, \eqref{eq:exponential_estimate_Theta}, \eqref{eq:exponential_estimate_tilde_Theta}, \eqref{eq:exponential_estimate_theta}, and the preceding estimate for $\wh{X}$. Moreover, if $\mathbb{E}[|\xi_1^u - \xi_2^u|^2] = 0$ for a.e. $u \in I$, the same argument yields \eqref{eq:turnpike_property_same_initial}. This completes the proof.
\end{proof}

\begin{remark}
\label{r:pointwise_result}
If we assume that $G \in L^{\infty}(I \times I; \mathbb{R})$, i.e., $|G(u, v)| \leq K$ for a.e. $u, v \in I$, then we can show that
\begin{equation*}
\Lambda_T \in C^1([0, T]; L^{\infty}(I \times I; \mathbb{R}^{d \times d})), \quad p_T \in C^1([0, T]; L^{\infty}(I; \mathbb{R}^d))
\end{equation*}
and
\begin{equation*}
\Lambda \in L^{\infty}(I \times I; \mathbb{R}^{d \times d}), \quad p \in L^{\infty}(I; \mathbb{R}^d).
\end{equation*}
Suppose in addition that $\mathbb{E}[|\xi^u_i|^2] < \infty$ for $i = 1, 2$ and a.e. $u \in I$. 
Then, by a similar argument, the turnpike property in \eqref{eq:turnpike_property} can be strengthened to the pointwise estimate
\begin{equation*}
\mathbb{E} \big[|X_T^u(t) - X_{\infty}^u(t)|^2 + |\alpha_T^u(t) - \alpha_{\infty}^u(t)|^2 \big] \leq K \big(e^{-\lambda t} + e^{-\lambda(T-t)} \big)
\end{equation*}
for all $t \in [0, T]$ and a.e. $u \in I$, where $K$ and $\lambda$ are independent of $u$, $t$, and $T$. 
\end{remark}

\subsection{Convergence of the time-averaged value function} 
\label{s:convergence_value}

In this section, we show that the time-averaged value function $\frac{1}{T} V_T(\xi)$ for the finite-horizon GMFC problem converges to the value $V_{\infty}$ of the ergodic control problem. 

\begin{lemma}
\label{l:convergence_value_function}
Suppose that Assumptions \ref{a:coefficients_solvability_finite}, \ref{a:stabilizability}, and \ref{a:solvability_Lambda_p} hold. Then, for all $\xi \in \kL_0^2$, 
\begin{equation}
\label{eq:convergence_value}
\lim_{T \to \infty} \frac{1}{T} V_{T}(\xi) = V_{\infty}.
\end{equation}
Here, $V_T(\xi) = \mathbb{V}_T(0, \xi)$ is the value function of the finite-horizon GMFC problem, where $\mathbb{V}_T$ is given explicitly by \eqref{eq:value_function_ansatz_finite}, while $V_{\infty}$, defined in \eqref{eq:ergodic_value}, denotes the value of the ergodic control problem. 
\end{lemma}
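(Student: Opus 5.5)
We work directly with the explicit formula $V_T(\xi)=\mathbb{V}_T(0,\xi)$ from \eqref{eq:value_function_ansatz_finite} and split it into the quadratic/linear part and the $\kappa_T$ part:
\begin{equation*}
\frac1T V_T(\xi)=\frac1T\Big(\int_I\mathbb E[\langle\xi^u,P_T(0)\xi^u\rangle]du+\int_I\langle\bar\xi^u,\Pi_T(0)\bar\xi^u\rangle du+\langle\bm{\Lambda}_T(0)(\bar\xi),\bar\xi\rangle_{L^2}+2\langle p_T(0),\bar\xi\rangle_{L^2}\Big)+\frac1T\int_I\kappa_T(0)(u)du .
\end{equation*}
For the first group, Lemma \ref{l:exponential_estimates_P_Pi}, Proposition \ref{p:exponential_estimates_Lambda} and Proposition \ref{p:exponential_estimates_p} give, uniformly in $T$,
\begin{equation*}
\|P_T(0)\|+\|\bar\Pi_T(0)\|+\|\Lambda_T(0)\|_{L^2}+\|p_T(0)\|_{L^2}\le \|P\|+\|\bar\Pi\|+\|\Lambda\|_{L^2}+\|p\|_{L^2}+K.
\end{equation*}
Using $\|\bm\Lambda\|_{op}\le\|\Lambda\|_{L^2}$ and $\xi\in\kL^2_0$, the bracket is therefore bounded independently of $T$, so after division by $T$ it tends to $0$.

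It thus remains to show that $\frac1T\int_I\kappa_T(0)(u)du\to V_\infty$. Since $\kappa_T(T)=0$, integrating \eqref{eq:ODE_kappa_T} gives
\begin{equation*}
\int_I\kappa_T(0)(u)du=\int_0^T\int_I g_T(t)(u)\,du\,dt,\qquad g_T(t)(u):=-\langle\Gamma(P_T(t),p_T(t))(u),\cR(P_T(t))^{-1}\Gamma(P_T(t),p_T(t))(u)\rangle+\langle P_T(t)\sigma,\sigma\rangle+2\langle p_T(t)(u),b\rangle .
\end{equation*}
Let $g(u)$ denote the same expression with $(P,p)$ in place of $(P_T(t),p_T(t))$. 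By \eqref{eq:ergodic_value}, $\int_I g(u)du=V_\infty$.

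The key step is the bound
\begin{equation*}
\Big|\int_I \big(g_T(t)(u)-g(u)\big)du\Big|\le Ke^{-\lambda(T-t)} .
\end{equation*}
To prove it, we write each difference as a sum of bilinear differences. First, $|\langle P_T(t)\sigma,\sigma\rangle-\langle P\sigma,\sigma\rangle|\le\|\Delta_P(t)\||\sigma|^2$. Second, $|\int_I\langle\Delta_p(t)(u),b\rangle du|\le|b|\,\|\Delta_p(t)\|_{L^2}$, using $L^2(I)\subset L^1(I)$ since $|I|=1$. Third, for the quadratic term we use
\begin{equation*}
\Gamma(P_T,p_T)-\Gamma(P,p)=-D^\top\Delta_P\sigma-B^\top\Delta_p,
\end{equation*}
together with
\begin{equation*}
\cR(P_T)^{-1}-\cR(P)^{-1}=\cR(P_T)^{-1}D^\top\Delta_P D\cR(P)^{-1},
\end{equation*}
and the uniform bound $\|\cR(P_T(t))^{-1}\|\le\|R^{-1}\|$. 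With Cauchy--Schwarz in $u$, this term is bounded by
\begin{equation*}
K\big(\|\Gamma(P_T,p_T)\|_{L^2}+\|\Gamma(P,p)\|_{L^2}\big)\big(\|\Delta_P\|+\|\Delta_p\|_{L^2}\big)+K\|\Delta_P\|\,\|\Gamma(P,p)\|_{L^2}^2 ,
\end{equation*}
where the $L^2$ norms of $\Gamma$ are bounded uniformly in $T$ and $t$. Combined with \eqref{eq:exponential_estimate_P_Pi} and \eqref{eq:exponential_estimate_p}, this gives the key bound.

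Consequently,
\begin{equation*}
\Big|\frac1T\int_I\kappa_T(0)(u)du-V_\infty\Big|\le\frac1T\int_0^T Ke^{-\lambda(T-t)}dt\le\frac{K}{\lambda T}\to0 ,
\end{equation*}
which yields \eqref{eq:convergence_value}.

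Only the quadratic $\Gamma$ term is more than routine: it must be handled through the bilinear identity above and integrated in $u$, rather than estimated pointwise, because $p_T$ and $p$ are controlled only in $L^2(I;\mathbb R^d)$.
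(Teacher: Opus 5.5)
Your proposal is correct and follows essentially the same route as the paper. The paper likewise uses $T$-uniform bounds on $P_T(0),\Pi_T(0),\Lambda_T(0),p_T(0)$ to dispose of the quadratic/linear part, integrates the $\kappa_T$ equation, and splits the $\Gamma$-quadratic difference with the same bilinear and resolvent identities before applying \eqref{eq:exponential_estimate_P_Pi} and \eqref{eq:exponential_estimate_p}; your version additionally makes the $O(1/T)$ rate explicit.
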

\begin{proof}
For all $\xi \in \kL_0^2$, the explicit representations of $V_{T}(\xi) = \mathbb{V}_T(0, \xi)$ in \eqref{eq:value_function_ansatz_finite} and $V_{\infty}$ in \eqref{eq:ergodic_value} yield
\begin{equation*}
\begin{aligned}
&\lim_{T \to \infty} \Big| \frac{1}{T} V_T(\xi) - V_{\infty} \Big| \\
\leq \ & \lim_{T \to \infty} \frac{1}{T} \Big(\int_{I} \big| \mathbb{E}[\langle \xi^u, P_T(0) \xi^u \rangle] \big| du + \int_{I} \big| \langle \bar{\xi}^u, \Pi_T(0) \bar{\xi}^u \rangle \big| du + \int_{I} \int_{I} \big| \langle \bar{\xi}^u, \Lambda_T(0)(u, v) \bar{\xi}^v \rangle \big| dv du \\
& \hspace{0.6in} + 2 \int_{I} \big|\langle \bar{\xi}^u, p_T(0)(u) \rangle \big| du \Big) + \lim_{T \to \infty} \Big| \frac{1}{T} \int_{I} \kappa_T(0)(u) du - V_{\infty} \Big| \\
= \ & \lim_{T \to \infty} \Big| \frac{1}{T} \int_{I} \kappa_T(0)(u) du - V_{\infty} \Big|
\end{aligned}
\end{equation*}
since 
$$\|P_T(0)\| + \|\Pi_T(0)\| + \|\Lambda_T(0)\|_{L^2} + \|p_T(0)\|_{L^2} \leq K$$
for some $K > 0$, independent of $T$. The above inequality follows from the estimates \eqref{eq:exponential_estimate_P_Pi}, \eqref{eq:exponential_estimate_Lambda}, and \eqref{eq:exponential_estimate_p}. Then, from the differential equation satisfied by $\kappa_T$ in \eqref{eq:ODE_kappa_T}, we obtain
\begin{equation*}
\begin{aligned}
\kappa_T(0)(u) & = \int_{0}^{T} \big( -\langle \Gamma(P_T(t), p_T(t))(u), \mathcal{R}(P_T(t))^{-1} \Gamma(P_T(t), p_T(t))(u) \rangle \\
& \hspace{0.6in} + \langle P_T(t)\sigma, \sigma \rangle  + 2 \langle p_T(t)(u), b \rangle \big) dt,
\end{aligned}
\end{equation*}
for a.e. $u \in I$, which gives that
\begin{equation*}
\begin{aligned}
\frac{1}{T} \int_{I} \kappa_T(0)(u) du - V_{\infty} = \ & \frac{1}{T} \int_{0}^{T} \Big\{ \int_{I} \big(\langle \Gamma(P, p)(u), \mathcal{R}(P)^{-1} \Gamma(P, p)(u) \rangle \\
& \hspace{0.5in} - \langle \Gamma(P_T(t), p_T(t))(u), \mathcal{R}(P_T(t))^{-1} \Gamma(P_T(t), p_T(t))(u) \rangle \big) du \\
& \hspace{0.5in} + \langle (P_T(t) - P) \sigma, \sigma \rangle + 2 \int_{I} \langle p_T(t)(u) - p(u), b \rangle du \Big\} dt.
\end{aligned}
\end{equation*}
Note that
\begin{equation*}
\begin{aligned}
& \big\langle \Gamma(P, p)(u), \mathcal{R}(P)^{-1} \Gamma(P, p)(u) \big\rangle - \big\langle \Gamma(P_T(t), p_T(t))(u), \mathcal{R}(P_T(t))^{-1} \Gamma(P_T(t), p_T(t))(u) \big\rangle \\
= \ & \big\langle D^\top (P - P_T(t))\sigma + B^\top (p(u) - p_T(t)(u)), \mathcal{R}(P)^{-1} \Gamma(P, p)(u) \big\rangle \\
& \hspace{0.2in} + \big\langle \Gamma(P_T(t), p_T(t))(u), \mathcal{R}(P)^{-1} D^\top(P_T(t) - P) D \mathcal{R}(P_T(t))^{-1} \Gamma(P, p)(u) \big\rangle \\
& \hspace{0.2in} + \big\langle \Gamma(P_T(t), p_T(t))(u), \mathcal{R}(P_T(t))^{-1} \big\{ D^\top (P - P_T(t))\sigma + B^\top (p(u) - p_T(t)(u)) \big\} \big\rangle.
\end{aligned}
\end{equation*}
Consequently, we derive the following inequality:
\begin{equation}
\label{eq:estimate_integral_kappa}
\begin{aligned}
& \lim_{T \to \infty} \Big| \frac{1}{T} \int_{I} \kappa_T(0)(u) du - V_{\infty} \Big| \\
\leq \ & \lim_{T \to \infty} \frac{1}{T} \Big(\int_{0}^{T} | \langle (P_T(t) - P)\sigma, \sigma \rangle | dt + \int_{0}^{T} \int_{I} 2 | \langle p_T(t)(u) - p(u), b \rangle | du dt \Big) \\
& + \lim_{T \to \infty} \frac{1}{T} \int_{0}^{T} \int_{I} \big| \big\langle D^\top(P - P_T(t))\sigma + B^\top(p(u) - p_T(t)(u)), \mathcal{R}(P)^{-1} \Gamma(P, p)(u) \big\rangle \big| du dt \\
& + \lim_{T \to \infty} \frac{1}{T} \int_{0}^{T} \int_{I} \big| \big\langle \Gamma(P_T(t), p_T(t))(u), \mathcal{R}(P)^{-1} D^\top(P_T(t) - P) D \mathcal{R}(P_T(t))^{-1} \Gamma(P, p)(u) \big\rangle \big| du dt \\
& + \lim_{T \to \infty} \frac{1}{T} \int_{0}^{T} \int_{I} \big| \big\langle \Gamma(P_T(t), p_T(t))(u), \mathcal{R}(P_T(t))^{-1} \big\{ D^\top(P - P_T(t))\sigma \\
& \hspace{1.2in} + B^\top(p(u) - p_T(t)(u)) \big\} \big\rangle \big| du dt.
\end{aligned}
\end{equation}
From the estimate \eqref{eq:exponential_estimate_P_Pi}, we obtain that
\begin{equation*}
\begin{aligned}
\lim_{T \to \infty} \frac{1}{T} \int_{0}^{T} \big| \langle (P_T(t) - P)\sigma, \sigma \rangle \big| dt & \leq \lim_{T \to \infty} \frac{1}{T} \int_{0}^{T} \|P_T(t) - P\| |\sigma|^2 dt \\
& \leq \lim_{T \to \infty} \frac{1}{T} \frac{K |\sigma|^2}{\lambda} (1 - e^{-\lambda T}) = 0.
\end{aligned}
\end{equation*}
Similarly, applying H\"older's inequality together with the estimate \eqref{eq:exponential_estimate_p}, we have
\begin{equation*}
\begin{aligned}
\lim_{T \to \infty} \frac{1}{T} \int_{0}^{T} \int_{I} 2 | \langle p_T(t)(u) - p(u), b \rangle | du dt
& \leq \lim_{T \to \infty} \frac{1}{T} \int_{0}^{T} 2 \|p_T(t) - p\|_{L^2} |b| dt \\
& \leq \lim_{T \to \infty} \frac{1}{T} \int_{0}^{T} 2 |b| K e^{-\lambda(T-t)} dt = 0.
\end{aligned}
\end{equation*}
Next, since $\|P_T(t)\| + \|\mathcal{R}(P_T(t))^{-1}\| + \|\mathcal{R}(P)^{-1}\| + \|p_T(t)\|_{L^2} \leq K$ for all $t \in [0, T]$, where $K > 0$ is independent of $T$, and $\Gamma(P, p)$ is linear in $P$ and $p$, a similar argument shows that all the remaining terms on the right-hand side of \eqref{eq:estimate_integral_kappa} converge to 0. Hence, the following limit holds:
\begin{equation*}
\lim_{T \to \infty} \Big| \frac{1}{T} \int_{I} \kappa_T(0)(u) du - V_{\infty} \Big| = 0,
\end{equation*}
which yields that
\begin{equation*}
\lim_{T \to \infty} \Big| \frac{1}{T} V_T(\xi) - V_{\infty} \Big| = 0
\end{equation*}
for all $\xi \in \kL^2_0$. This proves \eqref{eq:convergence_value}.
\end{proof}


\section{Unique solvability of the system of algebraic equations}
\label{s:solvability_algebraic}

Recall that Assumption \ref{a:solvability_Lambda_p} requires the system of algebraic equations \eqref{eq:Riccati_Lambda_ergodic}-\eqref{eq:Riccati_p_ergodic} to admit a unique solution such that the kernel $\Lambda(u, v)$ induces a positive operator on $L^2(I; \mathbb{R}^d)$. Moreover, for each fixed $t \geq 0$, we assume that the solution $\Lambda_T(t)$ of \eqref{eq:ODE_Lambda_T} converges to the solution $\Lambda$ of \eqref{eq:Riccati_Lambda_ergodic} in $L^2(I \times I; \mathbb{R}^{d \times d})$ as $T \to \infty$. To demonstrate that this assumption is reasonable, we provide a sufficient condition for the unique solvability of the system \eqref{eq:Riccati_Lambda_ergodic}-\eqref{eq:Riccati_p_ergodic}. To proceed, given $P, \Pi \in \mathbb{S}^d$ and $G(u, v) \in L^2(I \times I; \mathbb R)$, we define
\begin{equation*}
\begin{aligned}
\Psi_{3}(P, \Pi)(u,v) &= (P+\Pi)\wt{A}G(u,v) + \wt{A}^\top(P+\Pi)G(v, u) \\
&\hspace{0.3in} + (C+\bar{C})^\top P\wt{C}G(u,v) + \wt{C}^\top P(C+\bar{C})G(v,u) \\
&\hspace{0.3in} + \int_{I} \wt{C}^\top G(w,u)P \wt{C} G(w,v) \, dw + \wt{Q} G(u,v) \\
&\hspace{0.3in} + \int_{I} G(w,u)\check{Q} G(w,v) \, dw.
\end{aligned}
\end{equation*}
Moreover, we define the following operators on the finite horizon $[0, T]$: For $\phi \in L^2(I;\RR^d)$ or $\phi \in L^2(I;\RR^m)$, let
\begin{equation}
\label{eq:operators_finite}
\begin{aligned}
(\mathcal O_{1}(t) \phi)(u) &= \int_{I} \Psi_{3}(P_{T}(t), \Pi_{T}(t))(u,v) \phi^{v} dv, \\
(\mathcal O_{2}(t) \phi)(u) &= \widehat{\cM}(P_{T}(t), \Pi_{T}(t))^\top \cR(P_{T}(t))^{-1} \widehat{\cM}(P_{T}(t), \Pi_{T}(t)) \phi^{u}, \\
(\mathcal O_{3}(t) \phi)(u) &= \int_{I} D^\top P_{T}(t)\wt{C}G(u,v) \phi^{v} dv, \\
(\mathcal O_{4}(t) \phi)(u) &= \widehat{\cM}(P_{T}(t), \Pi_{T}(t)) \phi^{u}, \\
(\mathcal O_{5}(t) \phi)(u) &= \cR(P_{T}(t)) \phi^{u}.
\end{aligned}
\end{equation}
Similarly, for the infinite-horizon setting, we define the following operators. For $\phi \in L^2(I;\RR^d)$ or $\phi \in L^2(I;\RR^m)$, let
\begin{equation}
\label{eq:operators_ergodic}
\begin{aligned}
(\mathcal O_{1} \phi)(u) &= \int_{I} \Psi_{3}(P, \Pi)(u,v) \phi^{v} dv, \\
(\mathcal O_{2} \phi)(u) &= \widehat{\cM}(P, \Pi)^\top \cR(P)^{-1} \widehat{\cM}(P, \Pi) \phi^{u}, \\
(\mathcal O_{3} \phi)(u) &= \int_{I} D^\top P\wt{C}G(u,v) \phi^{v} dv, \\
(\mathcal O_{4} \phi)(u) &= \widehat{\cM}(P, \Pi) \phi^{u}, \\
(\mathcal O_{5} \phi)(u) &= \cR(P) \phi^{u}.
\end{aligned}
\end{equation}

We impose the following assumption to ensure the unique solvability of the algebraic system \eqref{eq:Riccati_Lambda_ergodic}-\eqref{eq:Riccati_p_ergodic}.

\begin{assumption}
\label{a:operators}
Let $\mathcal{O}_1(t), \dots, \mathcal{O}_5(t)$ be the operators defined in \eqref{eq:operators_finite} for $t \in [0, T]$, and let $\mathcal{O}_1, \dots, \mathcal{O}_5$ be the operators defined in \eqref{eq:operators_ergodic}. We assume that
\begin{equation}
\label{eq:operator_condition_1}
\begin{pmatrix} \mathcal O_{1}(t) + \mathcal O_{2}(t) & (\mathcal O_{3}(t) + \mathcal O_{4}(t))^{*} \\ \mathcal O_{3}(t) + \mathcal O_{4}(t) & \mathcal O_{5}(t) \end{pmatrix} \geq 0
\end{equation}
for all $t \in [0,T]$, and
\begin{equation}
\label{eq:operator_condition_2}
\begin{pmatrix} \mathcal O_{1}(t) + \mathcal O_{2}(t) & (\mathcal O_{3}(t) + \mathcal O_{4}(t))^{*} \\ \mathcal O_{3}(t) + \mathcal O_{4}(t) & \mathcal O_{5}(t) \end{pmatrix} \leq \begin{pmatrix} \mathcal O_{1} + \mathcal O_{2} & (\mathcal O_{3} + \mathcal O_{4})^{*} \\ \mathcal O_{3} + \mathcal O_{4} & \mathcal O_{5} \end{pmatrix}
\end{equation}
for all $t \in [0,T]$, where $(\mathcal O_{3}(t) + \mathcal O_{4}(t))^{*}$ and $(\mathcal O_{3} + \mathcal O_{4})^{*}$ are the adjoint operators of $\mathcal O_{3}(t) + \mathcal O_{4}(t)$ and $\mathcal O_{3} + \mathcal O_{4}$, respectively.
\end{assumption}

\begin{remark}
\label{re:operator_assump}
Since $R > 0$, $P > 0$, and $P_T(t) \geq 0$, the operators $\mathcal O_5(t)$ and $\mathcal O_5$ are both positive and therefore invertible. Denote
$$\mathcal O(t) := \mathcal O_{1}(t) + \mathcal O_{2}(t) - (\mathcal O_{3}(t) + \mathcal O_{4}(t))^{*} \mathcal O_5(t)^{-1} (\mathcal O_{3}(t) + \mathcal O_{4}(t))$$
for $t \in [0, T]$ and 
$$\mathcal O := \mathcal O_{1} + \mathcal O_{2} - (\mathcal O_{3} + \mathcal O_{4})^{*} \mathcal O_5^{-1} (\mathcal O_{3} + \mathcal O_{4}).$$
Then, the kernel of the operator $\mathcal O(t)$ is given by
\begin{equation*}
\begin{aligned}
\mathcal O(t)(u, v) &= \Psi_3(P_T(t), \Pi_T(t))(u, v) - \widehat{\cM}(P_{T}(t), \Pi_{T}(t))^\top \cR(P_T(t))^{-1} D^\top P_T(t) \wt{C} G(u, v) \\
& \hspace{0.3in} - \wt{C}^\top G(u, v) P_T(t) D \cR(P_T(t))^{-1} \widehat{\cM}(P_{T}(t), \Pi_{T}(t)) \\
& \hspace{0.3in} - \int_I \wt{C}^\top G(u, w) P_T(t) D \cR(P_T(t))^{-1} D^\top P_T(t) \wt{C} G(w, v) dw.
\end{aligned}
\end{equation*}
Then, the condition \eqref{eq:operator_condition_1} is equivalent to 
$$\int_I \int_I \lan \phi^u, \mathcal O(t)(u, v) \phi^v \ran dv du \geq 0, \quad \forall t \in [0, T], \, \phi \in L^2(I; \mathbb R^d).$$
Similarly, the condition \eqref{eq:operator_condition_2} is equivalent to
$$\int_I \int_I \lan \phi^u, (\mathcal O(u, v) - \mathcal O(t)(u, v)) \phi^v \ran dv du \geq 0, \quad \forall t \in [0, T], \, \phi \in L^2(I; \mathbb R^d).$$

Let us illustrate the condition by considering two simple cases. First, suppose that $\wt{C} = 0$. Then
$$\mathcal O(t) = \Psi_3(P_T(t), \Pi_T(t)).$$
If, in addition, the kernel $\wt{A}G(u, v)$ induces a positive operator on $L^2(I; \mathbb{R}^d)$, then Assumption \ref{a:coefficients_solvability_finite} implies that $\mathcal O(t) \geq 0$ for all $t \in [0, T]$. Moreover, the comparison condition \eqref{eq:operator_condition_2} can be verified by the monotonicity of $P_T$ and $\bar{\Pi}_T$ toward $P$ and $\bar{\Pi}$, respectively. Second, suppose that $D = 0$. In this case, the same identity $\mathcal O(t) = \Psi_3(P_T(t), \Pi_T(t))$ holds. Hence, the required condition \eqref{eq:operator_condition_1} is fulfilled whenever $\Psi_3(P_T(t), \Pi_T(t)) \geq 0$ for all $t \in [0, T]$.
\end{remark}

In the following proposition, we establish the unique solvability of the system of algebraic equations \eqref{eq:Riccati_Lambda_ergodic}-\eqref{eq:Riccati_p_ergodic} under Assumption \ref{a:operators}.

\begin{proposition}
\label{p:solvability_system_ergodic}
Let Assumptions \ref{a:coefficients_solvability_finite}, \ref{a:stabilizability}, and \ref{a:operators} hold. Then, the abstract algebraic Riccati equation \eqref{eq:Riccati_Lambda_ergodic} admits a unique solution $\Lambda \in L^2(I \times I; \RR^{d \times d})$ such that the kernel $\Lambda(u, v)$ induces a positive operator on $L^2(I; \mathbb{R}^d)$. Moreover, for each fixed $t \geq 0$, the solution $\Lambda_T(t)$ of \eqref{eq:ODE_Lambda_T} converges to $\Lambda$ in $L^2(I \times I; \mathbb{R}^{d \times d})$ as $T \to \infty$. Furthermore, the linear equation \eqref{eq:Riccati_p_ergodic} admits a unique solution $p \in L^2(I; \RR^d)$.
\end{proposition}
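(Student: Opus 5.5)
The plan is to read the abstract Riccati equation \eqref{eq:ODE_Lambda_T} as the Riccati equation of an auxiliary \emph{deterministic} LQ problem on the Hilbert space $L^2(I;\mathbb R^d)$. I would then obtain $\Lambda$ as a monotone limit of $\Lambda_T$, exactly as one proves existence of the stabilizing solution of a standard algebraic Riccati equation.

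\textbf{Step 1 (time-homogeneity).} The coefficients in \eqref{eq:ODE_P_T}, \eqref{eq:ODE_Pi_T} and \eqref{eq:ODE_Lambda_T} are autonomous and the terminal data vanish. Uniqueness in Proposition \ref{p:solvability_system_finite} therefore gives $\Lambda_T(t)=\Lambda_{T-t}(0)$, and likewise for $P_T$ and $\bar\Pi_T$. So it suffices to study $T\mapsto \Lambda_T(0)$.

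\textbf{Step 2 (the auxiliary problem).} Expand $F(t,\Lambda)$ using the definitions of $\Psi_1$ and $\Upsilon$. I expect this to show that \eqref{eq:ODE_Lambda_T} is the operator Riccati equation for the problem with state equation
\[
\dot y = \mathcal A_0 y + \mathcal B w, \qquad \mathcal A_0 y := (A+\bar A)y + \wt A[Gy], \qquad \mathcal B w := Bw,
\]
on $L^2(I;\mathbb R^d)$, with control $w\in L^2(I;\mathbb R^m)$. The running cost is the quadratic form of the block operator in \eqref{eq:operator_condition_1} evaluated at $(y,w)$. This identification absorbs $\mathcal O_2$ and $\mathcal O_4$: the local parts $\wh{\cM}$, $\cR$ are already accounted for by $\bar\Pi_T$.

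A completion-of-squares computation, analogous to Proposition \ref{p:fundamental_relation}, then gives the representation
\[
\langle \bm\Lambda_T(t)\phi,\phi\rangle_{L^2}=\inf_w \int_t^T \Big\langle \mathbb O(s)\binom{y}{w},\binom{y}{w}\Big\rangle\, ds, \qquad y(t)=\phi,
\]
where $\mathbb O(s)$ is the block operator in \eqref{eq:operator_condition_1}.

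\textbf{Step 3 (bounds and monotonicity).} Condition \eqref{eq:operator_condition_1} makes the integrand nonnegative, so $\bm\Lambda_T(t)\ge 0$. Next, write $\Lambda_{T}(0)$ as a cost over $[0,T]$, with integrand at time $s$ equal to $\mathbb O(s)$ for horizon $T$, i.e.\ built from $P_T(s)$, $\Pi_T(s)$. This integrand is nondecreasing in $T$ for fixed $s$, by monotonicity of $P_T$ and $\bar\Pi_T$ and condition \eqref{eq:operator_condition_2}. Combined with the longer interval of nonnegative cost, this shows $T\mapsto\bm\Lambda_T(0)$ is nondecreasing. For the upper bound, use the ergodic operators $\mathcal O_i$ and the stabilizer $(\bar\Theta,\wt\Theta)$ of Assumption \ref{a:stabilizability}. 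With feedback $w=\bar\Theta y+\wt\Theta y$ the state decays exponentially by \eqref{eq:operator_s_uniform_stable}, so \eqref{eq:operator_condition_2} gives $\langle\bm\Lambda_T(0)\phi,\phi\rangle\le K\|\phi\|^2_{L^2}$ uniformly in $T$.

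A bounded monotone family of self-adjoint operators converges strongly to some $\bm\Lambda\ge 0$.

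\textbf{Step 4 (main obstacle: Hilbert--Schmidt convergence).} Strong operator convergence must be upgraded to convergence of the kernels in $L^2(I\times I)$. I would use the integral form of \eqref{eq:ODE_Lambda_T} on $[0,1]$,
\[
\Lambda_T(0)=\Lambda_{T-1}(0)+\int_0^1 F(s,\Lambda_T(s))\,ds .
\]
Every term of $F$ is one of three kinds: a fixed Hilbert--Schmidt kernel built from $G$ and converging by Lemma \ref{l:exponential_estimates_P_Pi}; a bounded matrix times $\Lambda$; or a composition of $\Lambda$ with Hilbert--Schmidt kernels involving $G$. Products of strongly convergent, uniformly bounded operators with fixed Hilbert--Schmidt operators converge in Hilbert--Schmidt norm.

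The linear term $(A+\bar A+B\bar\Theta^*)^\top\Lambda+\Lambda(A+\bar A+B\bar\Theta^*)$ is not compact, so the plan is to move it to the left-hand side. Using stability of $\wt{\mathcal L}$ from \eqref{eq:operator_uniform_stable_2}, I would treat the differences $\Lambda_T-\Lambda_{T'}$ through a variation-of-constants formula, as in Proposition \ref{p:exponential_estimates_Lambda}. This should show that $\{\Lambda_T(0)\}$ is Cauchy in $L^2(I\times I)$. Passing to the limit in the integral equation then gives $F^\infty(\Lambda)=0$.

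\textbf{Step 5 (uniqueness of $\Lambda$).} Let $\Lambda'$ be any solution with $\bm\Lambda'\ge0$. A verification argument on $[0,T]$ with terminal cost $\langle\bm\Lambda' y(T),y(T)\rangle\ge 0$, using the ergodic operators and letting $T\to\infty$, identifies $\langle\bm\Lambda'\phi,\phi\rangle$ with the infinite-horizon value of the auxiliary problem. The argument of Lemma \ref{l:stabilizer} makes the induced feedback stabilizing, so that the terminal term vanishes in the limit. Hence $\Lambda'=\Lambda$.

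\textbf{Step 6 (the equation for $p$).} Equation \eqref{eq:Riccati_p_ergodic} reads $\mathcal L^* p + f=0$ with $f\in L^2(I;\mathbb R^d)$ explicit and $\mathcal L$ given in \eqref{eq:linear_operator}. By Lemma \ref{l:stabilizer} and \eqref{eq:operator_uniform_stable}, the spectrum of $\mathcal L^*$ lies in the open left half-plane. Therefore $\mathcal L^*$ is invertible, with $(\mathcal L^*)^{-1}=-\int_0^\infty e^{t\mathcal L^*}\,dt$, and $p=-(\mathcal L^*)^{-1}f$ is the unique solution.
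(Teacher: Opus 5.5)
Your overall route is the paper's. You use the auxiliary deterministic problem on $L^2(I;\mathbb{R}^d)$ whose Riccati equation is \eqref{eq:ODE_Lambda_T}, positivity from \eqref{eq:operator_condition_1}, a uniform bound from the stabilizer of Assumption~\ref{a:stabilizability} together with \eqref{eq:operator_condition_2}, a monotone limit, and invertibility of $\mathcal{L}^*$ for $p$. Your Steps~4--5 target points the paper only asserts: it passes directly from ``monotone and bounded'' to convergence in $L^2(I\times I;\mathbb{R}^{d\times d})$, and claims uniqueness without argument. Step~5 is fine once $\Lambda$ is in hand. In Step~6, cite the \emph{argument} of Lemma~\ref{l:stabilizer}, which only uses $p=0$ when $b=\sigma=0$, rather than the lemma itself, whose hypotheses include Assumption~\ref{a:solvability_Lambda_p}.

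Two steps fail as written. The first is the monotonicity in Step~3. Condition \eqref{eq:operator_condition_2} compares each horizon-$T$ block $\mathbb{O}_T(s)$ only with the ergodic block, never $\mathbb{O}_{T_1}(s)$ with $\mathbb{O}_{T_2}(s)$. The orderings $P_{T_1}(s)\le P_{T_2}(s)$ and $\bar\Pi_{T_1}(s)\le\bar\Pi_{T_2}(s)$ do not order these blocks either. The kernel $\Psi_3$ contains $\bar\Pi\wt{A}G(u,v)+\wt{A}^\top\bar\Pi G(v,u)$ and $(C+\bar C)^\top P\wt{C}G(u,v)+\wt{C}^\top P(C+\bar C)G(v,u)$, which are sign-indefinite in the increment. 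Moreover, $\mathcal{O}_2,\mathcal{O}_4,\mathcal{O}_5$ combine into $\langle w+\cR(P)^{-1}\wh{\cM}y,\cR(P)(w+\cR(P)^{-1}\wh{\cM}y)\rangle$, which is not monotone in $(P,\bar\Pi)$. Already for $D=0$, the increment of the block operator has zero lower-right entry and off-diagonal entry $B^\top\delta\bar\Pi$, so it is indefinite. Hence ``$T\mapsto\bm{\Lambda}_T(0)$ is nondecreasing'' is unproved. The paper asserts it too, but ``nonnegative cost on a longer interval'' only gives monotonicity when the running cost does not depend on $T$. You need an extra hypothesis (e.g.\ $\mathbb{O}_T(s)$ nondecreasing in $T$) or an argument that avoids monotonicity.

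The second failure is that Step~4 does not yield Hilbert--Schmidt convergence.
\begin{itemize}
\item Your classification of the terms of $F$ omits the quadratic term $\int_I\Lambda(w,u)^\top B\cR(P_T)^{-1}B^\top\Lambda(w,v)\,dw$. This is $\bm{\Lambda}\mathcal{B}\cR^{-1}\mathcal{B}^*\bm{\Lambda}$, with $\mathcal{B}$ pointwise multiplication by $B$. It is not a composition with a fixed Hilbert--Schmidt kernel, so strong convergence of $\bm{\Lambda}_T$ gives no Hilbert--Schmidt control of it.
\item You cannot import the variation-of-constants argument of Proposition~\ref{p:exponential_estimates_Lambda} either; it is circular here. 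There, $\wt{\mathcal{L}}$ is built from $\wt\Theta^*=-\cR(P)^{-1}\Upsilon(P,\Lambda)$, which presupposes $\Lambda\in L^2(I\times I)$. Its stability comes from Lemma~\ref{l:stabilizer}, which assumes Assumption~\ref{a:solvability_Lambda_p}. And the quadratic remainder is absorbed only through smallness of $\|\Lambda-\Lambda_T(0)\|_{L^2}$, which is the very convergence at stake.
\end{itemize}

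What is actually missing is a uniform Hilbert--Schmidt bound. Suppose $0\le\bm{\Lambda}_T(0)\le\mathcal{Y}$ for one fixed Hilbert--Schmidt operator $\mathcal{Y}\ge0$. Since $\operatorname{tr}(X^2)\le\operatorname{tr}(Y^2)$ whenever $0\le X\le Y$, this gives $\sup_T\|\Lambda_T(0)\|_{L^2(I\times I)}<\infty$. Combined with monotone strong convergence, you would get $\Lambda\in L^2$ by lower semicontinuity and $\|\Lambda_T(0)\|_{L^2}\to\|\Lambda\|_{L^2}$. Weak convergence plus convergence of norms then gives convergence in $L^2(I\times I)$.

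Your operator-norm bound does not furnish such a $\mathcal{Y}$ with a generic stabilizer. The closed-loop running cost then contains the non-compact multiplication term $(\bar\Theta+\cR(P)^{-1}\wh{\cM})^\top\cR(P)(\bar\Theta+\cR(P)^{-1}\wh{\cM})$. You would need a stabilizer whose local part is exactly $-\cR(P)^{-1}\wh{\cM}(P,\Pi)$ and whose nonlocal part is Hilbert--Schmidt. This looks attainable: compactness of $\wt{A}G$ leaves only finitely many unstable modes, and their Hautus condition should carry over from Assumption~\ref{a:stabilizability} by feedback invariance. But it is an additional argument that your proposal does not make.
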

\begin{proof}
First, we recall that, under Assumptions \ref{a:coefficients_solvability_finite} and \ref{a:stabilizability}, \cite[Lemma 2.2]{Sun-Yong-2024} implies that the system of standard algebraic Riccati equations \eqref{eq:Riccati_P_Pi_ergodic} admits a unique solution pair $(P, \bar{\Pi}) \in \mathbb S^{d}_{++} \times \mathbb S^d_{++}$. Moreover, for all $T > 0$ and $t \in [0, T]$, one has $P_T(t) \leq P$ and $\bar{\Pi}_T(t) \leq \bar{\Pi}$,  where $P_T$ and $\bar{\Pi}_T$ are the solutions to the differential Riccati equations \eqref{eq:ODE_P_T} and \eqref{eq:ODE_Pi_T}, respectively. Furthermore, for each fixed $t \geq 0$, $P_T(t)$ converges to $P$ and $\bar{\Pi}_T(t)$ converges to $\bar{\Pi}$ as $T \to \infty$. 

Next, we show that the abstract algebraic Riccati equation \eqref{eq:Riccati_Lambda_ergodic} admits a unique solution $\Lambda$ in $L^{2}(I \times I; \mathbb{R}^{d \times d})$. We also show that, for each fixed $t \geq 0$, $\Lambda_{T}(t)$ converges to $\Lambda$ in $L^2(I \times I; \mathbb{R}^{d \times d})$ as $T \to \infty$, where $\Lambda_T$ is the solution to the abstract Riccati differential equation \eqref{eq:ODE_Lambda_T}. The idea is to construct a finite-horizon control problem whose value function takes the form
\begin{equation*}
\wt{V}_{T}(t,\bar{\xi}) = \int_{I}\int_{I} \langle \bar{\xi}^{u}, \Lambda_{T}(t)(u,v) \bar{\xi}^{v} \rangle dv du,
\end{equation*}
where $\Lambda_{T}$ satisfies the same equation as \eqref{eq:ODE_Lambda_T}. We then pass to the limit as $T \to \infty$ to construct a solution to \eqref{eq:Riccati_Lambda_ergodic}.

Let $\mathcal{A}^D[0, T]$ denote the collection of deterministic measurable functions $\bar{\alpha} = (\bar{\alpha}^{u})_{u \in I}$ satisfying $\int_{I} \int_{0}^T |\bar{\alpha}^{u}(s)|^{2} ds du < \infty$. We consider the following deterministic control problem. Given $\bar{\xi} \in L^2(I; \mathbb{R}^d)$ and $\bar{\alpha} \in \mathcal{A}^D[0, T]$, the state dynamics are governed by the controlled ODE \eqref{eq:homo_ode}, i.e.,
\begin{equation*}
\begin{cases}
d\bar{X}^{u}(t) = \{ (A+\bar{A})\bar{X}^{u}(t) + \wt{A}[G\bar{X}(t)]^{u} + B\bar{\alpha}^{u}(t)\} dt, \\
\bar{X}^{u}(0) = \bar{\xi}^{u},
\end{cases}
\end{equation*}
and the associated cost functional is given by
\begin{equation*}
\begin{aligned}
\wt{J}_{T}(t,\bar{\xi},\bar{\alpha}) &= \int_{t}^T \Big\{ \int_{I}\int_{I} \langle \bar{X}^{u}(s), \Psi_{3}(P_{T}(s), \Pi_{T}(s))(u,v) \bar{X}^{v}(s) \rangle dv du \\
&\hspace{0.3in} + 2\int_{I} \Big\langle \bar{\alpha}^{u}(s), \int_{I} D^\top P_{T}(s)\wt{C} G(u,v) \bar{X}^{v}(s) dv \Big\rangle du \\
&\hspace{0.3in} + 2\int_{I} \langle \bar{\alpha}^{u}(s), \widehat{\cM}(P_{T}(s), \Pi_{T}(s)) \bar{X}^{u}(s) \rangle du \\
&\hspace{0.3in} + \int_{I} \big\langle \bar{X}^{u}(s), \widehat{\cM}(P_{T}(s), \Pi_{T}(s))^\top \cR(P_{T}(s))^{-1} \widehat{\cM}(P_{T}(s), \Pi_{T}(s)) \bar{X}^{u}(s) \big\rangle \, du \\
&\hspace{0.3in} + \int_{I} \langle \bar{\alpha}^{u}(s), \cR(P_{T}(s)) \bar{\alpha}^{u}(s) \rangle du \Big\} ds,
\end{aligned}
\end{equation*}
where $P_{T}$ and $\bar{\Pi}_{T}$ are the unique solutions to \eqref{eq:ODE_P_T} and \eqref{eq:ODE_Pi_T}, respectively, and $\Pi_{T}(t) = \bar{\Pi}_{T}(t) - P_{T}(t)$. For the above deterministic control problem, we consider the following ansatz for the value function:
\begin{equation*}
\wt{\mathbb V}_{T}(t, \bar{\xi}) = \int_{I}\int_{I} \langle \bar{\xi}^{u}, \Lambda_{T}(t)(u,v) \bar{\xi}^{v} \rangle dv du,
\end{equation*}
where $\Lambda_{T} \in C^1([0,T]; L^{2}(I \times I; \mathbb{R}^{d \times d}))$ is a suitable function to be determined. To proceed, we define
$\wt{\mathcal V}_{T}(t) := \int_{I}\int_{I} \langle \bar{X}^{u}(t), \Lambda_{T}(t)(u,v) \bar{X}^{v}(t) \rangle dv du$
and let
\begin{equation*}
\begin{aligned}
\wt{\mathcal Y}_{T}(t) &= \wt{\mathcal V}_{T}(t) + \int_{0}^t \Big\{ \int_{I} \int_{I} \langle \bar{X}^{u}(s), \Psi_{3}(P_{T}(s), \Pi_{T}(s))(u,v) \bar{X}^{v}(s) \rangle dv du  \\
&\hspace{0.3in} + 2\int_{I} \Big\langle \bar{\alpha}^{u}(s), \int_{I} D^\top P_{T}(s)\wt{C}G(u,v)\bar{X}^{v}(s) dv \Big\rangle du \\
&\hspace{0.3in} + 2\int_{I} \big\langle \bar{\alpha}^{u}(s), \widehat{\cM}(P_{T}(s), \Pi_{T}(s)) \bar{X}^{u}(s) \big\rangle du  \\
&\hspace{0.3in} + \int_{I} \big\langle \bar{X}^{u}(s), \widehat{\cM}(P_{T}(s), \Pi_{T}(s))^\top \cR(P_{T}(s))^{-1} \widehat{\cM}(P_{T}(s), \Pi_{T}(s)) \bar{X}^{u}(s) \big\rangle du \\
&\hspace{0.3in} + \int_{I} \langle \bar{\alpha}^{u}(s), \cR(P_{T}(s)) \bar{\alpha}^{u}(s) \rangle du \Big\} ds.
\end{aligned}
\end{equation*}
Then, taking the derivative with respect to $t$, we obtain
\begin{equation*}
\begin{aligned}
\frac{d}{dt} \wt{\mathcal Y}_{T}(t) &= \int_{I}\int_{I} \langle \bar{X}^{u}(t), \dot{\Lambda}_{T}(t)(u,v) \bar{X}^{v}(t) \rangle dv du \\
&\hspace{0.3in} + \int_{I}\int_{I} \langle (A+\bar{A})\bar{X}^{u}(t) + \wt{A}[G\bar{X}(t)]^{u} + B\bar{\alpha}^{u}(t), \Lambda_{T}(t)(u,v) \bar{X}^{v}(t) \rangle dv du \\
&\hspace{0.3in} + \int_{I}\int_{I} \langle \bar{X}^{u}(t), \Lambda_{T}(t)(u,v) \{(A+\bar{A})\bar{X}^{v}(t) + \wt{A}[G\bar{X}(t)]^{v} + B\bar{\alpha}^{v}(t) \} \rangle dv du \\
&\hspace{0.3in} + \int_{I}\int_{I} \langle \bar{X}^{u}(t), \Psi_{3}(P_{T}(t), \Pi_{T}(t))(u,v) \bar{X}^{v}(t) \rangle dv du \\
&\hspace{0.3in} + 2\int_{I} \Big\langle \bar{\alpha}^{u}(t), \int_{I} D^\top P_{T}(t)\wt{C}G(u,v) \bar{X}^{v}(t) dv \Big\rangle du \\
&\hspace{0.3in} + 2\int_{I} \big\langle \bar{\alpha}^{u}(t), \widehat{\cM}(P_{T}(t), \Pi_{T}(t)) \bar{X}^{u}(t) \big\rangle du \\
&\hspace{0.3in} + \int_{I} \langle \bar{X}^{u}(t), \widehat{\cM}(P_{T}(t), \Pi_{T}(t))^\top \cR(P_{T}(t))^{-1} \widehat{\cM}(P_{T}(t), \Pi_{T}(t)) \bar{X}^{u}(t) \rangle du \\
&\hspace{0.3in} + \int_{I} \langle \bar{\alpha}^{u}(t), \cR(P_{T}(t)) \bar{\alpha}^{u}(t) \rangle du.
\end{aligned}
\end{equation*}

Set
\begin{equation*}
\wt{Z}_{T}^{u}(t) = \widehat{\cM}(P_{T}(t), \Pi_{T}(t)) \bar{X}^{u}(t) + \int_{I} \Upsilon(P_{T}(t), \Lambda_{T}(t))(u,v) \bar{X}^{v}(t) dv.
\end{equation*}
Then, $\frac{d}{dt} \wt{\mathcal Y}_{T}(t)$ can be rewritten as follows:
\begin{equation*}
\begin{aligned}
\frac{d}{dt} \wt{\mathcal Y}_{T}(t) &= \int_{I}\int_{I} \langle \bar{X}^{u}(t), \{ \dot{\Lambda}_{T}(t)(u,v) + \Psi_{1}(P_{T}(t), \Pi_{T}(t), \Lambda_{T}(t))(u,v) \} \bar{X}^{v}(t) \rangle dv du \\
&\hspace{0.3in} + \int_{I} \langle \bar{\alpha}^{u}(t), \cR(P_{T}(t)) \bar{\alpha}^{u}(t) \rangle du + 2 \int_{I} \langle \bar{\alpha}^{u}(t), \wt{Z}_{T}^{u}(t) \rangle du \\
&\hspace{0.3in} + \int_{I} \langle \bar{X}^{u}(t), \widehat{\cM}(P_{T}(t), \Pi_{T}(t))^\top \cR(P_{T}(t))^{-1} \widehat{\cM}(P_{T}(t), \Pi_{T}(t)) \bar{X}^{u}(t) \rangle du.
\end{aligned}
\end{equation*}
Completing the square with respect to the control terms, we have
\begin{equation*}
\begin{aligned}
&\int_{I} \langle \bar{\alpha}^{u}(t), \cR(P_{T}(t)) \bar{\alpha}^{u}(t) \rangle \, du + 2 \int_{I} \langle \bar{\alpha}^{u}(t), \wt{Z}_{T}^{u}(t) \rangle du \\
= \ & \int_{I} \langle \cR(P_{T}(t)) \{ \bar{\alpha}^{u}(t) + \cR(P_{T}(t))^{-1} \wt{Z}_{T}^{u}(t) \}, \{ \bar{\alpha}^{u}(t) + \cR(P_{T}(t))^{-1} \wt{Z}_{T}^{u}(t) \} \rangle du \\
&\quad - \int_{I} \langle \wt{Z}_{T}^{u}(t), \cR(P_{T}(t))^{-1} \wt{Z}_{T}^{u}(t) \rangle du.
\end{aligned}
\end{equation*}
Expanding the term $\int_{I} \langle \wt{Z}_{T}^{u}(t), \cR(P_{T}(t))^{-1} \wt{Z}_{T}^{u}(t) \rangle \, du$, we observe that
\begin{equation*}
\begin{aligned}
&\int_{I} \langle \wt{Z}_{T}^{u}(t), \cR(P_{T}(t))^{-1} \wt{Z}_{T}^{u}(t) \rangle du \\
= \ & \int_{I} \langle \bar{X}^{u}(t), \widehat{\cM}(P_{T}(t), \Pi_{T}(t))^\top \cR(P_{T}(t))^{-1} \widehat{\cM}(P_{T}(t), \Pi_{T}(t)) \bar{X}^{u}(t) \rangle du \\
& \hspace{0.1in} + \int_{I}\int_{I} \langle \bar{X}^{u}(t), \widehat{\cM}(P_{T}(t), \Pi_{T}(t))^\top \cR(P_{T}(t))^{-1} \Upsilon(P_{T}(t), \Lambda_{T}(t))(u,v) \bar{X}^{v}(t) \rangle dv du \\
& \hspace{0.1in} + \int_{I}\int_{I} \langle \bar{X}^{u}(t), \Upsilon(P_{T}(t), \Lambda_{T}(t))(v,u)^\top \cR(P_{T}(t))^{-1} \widehat{\cM}(P_{T}(t), \Pi_{T}(t)) \bar{X}^{v}(t) \rangle dv du \\
& \hspace{0.1in} + \int_{I}\int_{I} \Big\langle \bar{X}^{u}(t), \int_{I} \Upsilon(P_{T}(t), \Lambda_{T}(t))(w,u)^\top \cR(P_{T}(t))^{-1} \Upsilon(P_{T}(t), \Lambda_{T}(t))(w,v) dw \bar{X}^{v}(t) \Big\rangle dv du.
\end{aligned}
\end{equation*}
Hence, we deduce the following identity:
\begin{equation*}
\begin{aligned}
\frac{d}{dt} \wt{\mathcal Y}_{T}(t) &= \int_{I}\int_{I} \langle \bar{X}^{u}(t), \{ \dot{\Lambda}_{T}(t)(u,v) + F(t, \Lambda_{T}(t))(u,v) \} \bar{X}^{v}(t) \rangle dv du \\
&\hspace{0.3in} + \int_{I} \langle \cR(P_{T}(t)) \{ \bar{\alpha}^{u}(t) + \cR(P_{T}(t))^{-1} \wt{Z}_{T}^{u}(t) \}, \{ \bar{\alpha}^{u}(t) + \cR(P_{T}(t))^{-1} \wt{Z}_{T}^{u}(t) \} \rangle du.
\end{aligned}
\end{equation*}
Therefore, taking $\Lambda_{T}$ to be the unique solution to \eqref{eq:ODE_Lambda_T} and integrating over $[0, T]$, we obtain
\begin{equation*}
\begin{aligned}
& \int_{0}^T \int_{I} \langle \cR(P_{T}(t)) \{ \bar{\alpha}^{u}(t) + \cR(P_{T}(t))^{-1} \wt{Z}_{T}^{u}(t) \}, \{ \bar{\alpha}^{u}(t) + \cR(P_{T}(t))^{-1} \wt{Z}_{T}^{u}(t) \} \rangle du dt \\
= \ & \wt{\mathcal Y}_{T}(T) - \wt{\mathcal Y}_{T}(0) = \wt{\mathcal V}_T(T) + \wt{J}_T(0,\bar{\xi},\bar{\alpha}) - \wt{\mathcal V}_T(0).
\end{aligned}
\end{equation*}
Since $\wt{\mathcal V}_{T}(T) = 0$ and $\wt{\mathcal V}_{T}(0) = \wt{\mathbb V}_{T}(0, \bar{\xi}) = \int_{I}\int_{I} \langle \bar{\xi}^{u}, \Lambda_{T}(0)(u,v) \bar{\xi}^{v} \rangle dv du$, we conclude that
\begin{equation*}
\begin{aligned}
\wt{J}_{T}(0, \bar{\xi}, \bar{\alpha}) & = \int_{0}^T \int_{I} \langle \cR(P_{T}(t)) \{ \bar{\alpha}^{u}(t) + \cR(P_{T}(t))^{-1} \wt{Z}_{T}^{u}(t) \}, \{ \bar{\alpha}^{u}(t) + \cR(P_{T}(t))^{-1} \wt{Z}_{T}^{u}(t) \} \rangle du dt \\
& \hspace{0.3in} + \int_{I}\int_{I} \langle \bar{\xi}^{u}, \Lambda_{T}(0)(u,v) \bar{\xi}^{v} \rangle dv du.
\end{aligned}
\end{equation*}
Under the feedback control
$$\bar{\alpha}_{T}^{u}(t) = -\cR(P_{T}(t))^{-1} \wt{Z}_{T}^{u}(t) = \bar{\Theta}_T^*(t) \bar{X}^u(t) + \int_I \wt{\Theta}^*_T(t)(u,v) \bar{X}^v(t) dv$$ 
for all $t\in[0,T]$ and a.e. $u \in I$, the cost functional attains its minimum, and we have
\begin{equation*}
\wt{V}_{T}(0, \bar{\xi}) = \inf_{\bar{\alpha} \in \mathcal{A}^D[0, T]} \wt{J}_{T}(0, \bar{\xi}, \bar{\alpha}) = \int_{I}\int_{I} \langle \bar{\xi}^{u}, \Lambda_{T}(0)(u,v) \bar{\xi}^{v} \rangle dv du.
\end{equation*}

Using the operators defined in \eqref{eq:operators_finite}, the cost functional $\wt{J}_T$ can be rewritten as
\begin{equation*}
\wt{J}_{T}(t, \bar{\xi}, \bar{\alpha}) = \int_{t}^T \Big\langle \begin{pmatrix} \bar{X}(s) \\ \bar{\alpha}(s) \end{pmatrix}, \begin{pmatrix} \mathcal O_{1}(s) + \mathcal O_{2}(s) & (\mathcal O_{3}(s) + \mathcal O_{4}(s))^{*} \\ \mathcal O_{3}(s) + \mathcal O_{4}(s) & \mathcal O_{5}(s) \end{pmatrix} \begin{pmatrix} \bar{X}(s) \\ \bar{\alpha}(s) \end{pmatrix} \Big\rangle_{L^2} ds.
\end{equation*}
By \eqref{eq:operator_condition_1} in Assumption \ref{a:operators}, for all $\bar{\xi}\in L^2(I; \RR^d)$ and $\bar{\alpha}\in \cA^{D}[0, T]$, we have
$$\wt{J}_T(t,\bar{\xi},\bar{\alpha})\ge 0, \quad \forall  t\in[0,T].$$
It follows that, for arbitrary $T$ and $t \in [0, T]$, the kernel $\Lambda_T(t)(u, v)$ induces a positive operator on $L^2(I; \mathbb{R}^d)$. Moreover, for $0 \leq T_1 \leq T_2$ and $t \in [0, T_1]$,
$$\int_I \int_I \lan \bar{\xi}^u, \Lambda_{T_1}(t)(u, v) \bar{\xi}^v \ran dv du \leq \int_I \int_I \lan \bar{\xi}^u, \Lambda_{T_2}(t)(u, v) \bar{\xi}^v \ran dv du$$
for all $\bar{\xi} \in L^2(I; \mathbb R^d)$. We define
$$\wt{\Lambda}(t) = \Lambda_T(T-t), \quad \forall t \in [0, T].$$
Then, $\Lambda_t(0) = \wt{\Lambda}(t)$. Since $\bar{\xi}$ is arbitrary, the above inequality implies that $\wt{\Lambda}(t)$ is a non-decreasing operator with respect to $t$.

Let $\cA^D[0, \infty)$ denote the collection of deterministic measurable functions $\bar{\alpha} = (\bar{\alpha}^{u})_{u \in I}$ satisfying $\int_{I} \int_{0}^{\infty} |\bar{\alpha}^{u}(s)|^{2} ds du < \infty$. Similarly, for a given $\bar{\alpha} \in \cA^D[0, \infty)$, we consider the corresponding deterministic control problem over the infinite horizon:
\begin{equation*}
\wt{J}_{\infty}(\bar{\xi}, \bar{\alpha}) = \int_{0}^{\infty} \Big\langle \begin{pmatrix} \bar{X}(s) \\ \bar{\alpha}(s) \end{pmatrix}, \begin{pmatrix} \mathcal O_{1} + \mathcal O_{2} & (\mathcal O_{3} + \mathcal O_{4})^{*} \\ \mathcal O_{3} + \mathcal O_{4} & \mathcal O_{5} \end{pmatrix} \begin{pmatrix} \bar{X}(s) \\ \bar{\alpha}(s) \end{pmatrix} \Big\rangle_{L^2} ds.
\end{equation*}
By Assumption \ref{a:stabilizability}, there exists a stabilizer $(\bar{\Theta}, \wt{\Theta})$ to the controlled ODE \eqref{eq:homo_ode}, where $\bar{\Theta} \in \RR^{m \times d}$ and $\wt{\Theta} \in L^2(I \times I; \RR^{m \times d})$. Define
$$\bar{\alpha}_{\infty}^{u}(s) = \bar{\Theta} \bar{X}_{\infty}^u(s) + \int_I \wt{\Theta}(u, v) \bar{X}_{\infty}^v(s) dv, \quad \forall s \geq 0, \text{ and a.e. }u \in I,$$
where $\bar{X}_{\infty} = (\bar{X}^u_{\infty})_{u \in I}$ is the solution to the corresponding closed-loop system. Then, by the definition of the linear operator $\mathcal{L}_{\text{s}}$, we have $\bar{X}_{\infty}(t) = e^{t \mathcal{L}_\text{s}} \bar{\xi}$ for all $t \geq 0$. The estimate \eqref{eq:operator_s_uniform_stable} implies that there exist constants $K > 0$ and $\lambda > 0$ such that, for all $\bar{\xi} \in L^2(I; \mathbb{R}^d)$,
$$\|\bar{X}_{\infty}(t)\|^2_{L^2} \leq K e^{-\lambda t} \|\bar{\xi}\|^2_{L^2}, \quad \forall t \geq 0.$$
Hence, there exists a constant $K > 0$ such that
$$\wt{J}_{\infty}(\bar{\xi}, \bar{\alpha}_{\infty}) \leq K \|\bar{\xi}\|^2_{L^2} < \infty, \quad \forall \bar{\xi} \in L^2(I; \mathbb R^d).$$ 
By using \eqref{eq:operator_condition_2} in Assumption \ref{a:operators}, we deduce that
\begin{equation*}
\begin{aligned}
& \int_{I} \int_I \lan \bar{\xi}^u, \Lambda_{T}(0)(u, v) \bar{\xi}^v \ran dv du \leq \wt{J}_{T}(0, \bar{\xi}, \bar{\alpha}_{\infty}) \\
\leq \ & \int_0^T \Big\langle \begin{pmatrix} \bar{X}_{\infty}(s) \\ \bar{\alpha}_{\infty}(s) \end{pmatrix}, \begin{pmatrix} \mathcal O_{1} + \mathcal O_{2} & (\mathcal O_{3} + \mathcal O_{4})^{*} \\ \mathcal O_{3} + \mathcal O_{4} & \mathcal O_{5} \end{pmatrix} \begin{pmatrix} \bar{X}_{\infty}(s) \\ \bar{\alpha}_{\infty}(s) \end{pmatrix} \Big\rangle_{L^2} ds \\
\leq \ & \int_0^{\infty} \Big\langle \begin{pmatrix} \bar{X}_{\infty}(s) \\ \bar{\alpha}_{\infty}(s) \end{pmatrix}, \begin{pmatrix} \mathcal O_{1} + \mathcal O_{2} & (\mathcal O_{3} + \mathcal O_{4})^{*} \\ \mathcal O_{3} + \mathcal O_{4} & \mathcal O_{5} \end{pmatrix} \begin{pmatrix} \bar{X}_{\infty}(s) \\ \bar{\alpha}_{\infty}(s) \end{pmatrix} \Big\rangle_{L^2} ds \\
\leq \ & K \|\bar{\xi}\|^2_{L^2} < \infty.
\end{aligned}
\end{equation*}
Thus, $\wt{\Lambda}(t) = \Lambda_t(0)$ is uniformly bounded above for all $t \geq 0$. Together with the monotonicity of $\wt{\Lambda}(t)$, this implies that there exists $\wt{\Lambda}_{\infty} \in L^2(I \times I; \mathbb{R}^{d \times d})$ such that $\wt{\Lambda}(t)$ converges to $\wt{\Lambda}_{\infty}$ in $L^2(I \times I; \mathbb{R}^{d \times d})$. Passing to the limit in the finite-horizon Riccati equation \eqref{eq:ODE_Lambda_T} shows that $\wt{\Lambda}_{\infty}$ satisfies the same equation as $\Lambda$ in \eqref{eq:Riccati_Lambda_ergodic}, which proves the existence and uniqueness of a solution to \eqref{eq:Riccati_Lambda_ergodic}. Moreover, since the kernel $\Lambda_T(t)(u, v)$ induces a positive operator on $L^2(I; \mathbb{R}^d)$ for all $T > 0$ and $t \in [0, T]$, the kernel $\Lambda(u, v)$ also induces a positive operator on $L^2(I; \mathbb{R}^d)$. 

Finally, given $(P, \bar{\Pi}) \in \mathbb{S}^d_{++} \times \mathbb{S}^d_{++}$ and $\Lambda \in L^2(I \times I; \mathbb{R}^{d \times d})$, the equation \eqref{eq:Riccati_p_ergodic} is linear in $p$. Furthermore, the linear term in $p$ appearing in \eqref{eq:Riccati_p_ergodic} is exactly $\mathcal{L}^*(p)(u)$, where $\mathcal{L}^*$ is the adjoint operator of $\mathcal{L}$ defined in \eqref{eq:linear_operator}. Under Assumption \ref{a:stabilizability}, the operator $\mathcal{L}^*$ is uniformly stable. Consequently, the equation \eqref{eq:Riccati_p_ergodic} admits a unique solution.
\end{proof}

{\small
\bibliographystyle{abbrv}
\bibliography{bib_control}
}

\end{document}